\newcommand{\dint}{\mathrm{d}}
\newcommand{\dist}{\mathsf{d}}
\newcommand{\Lint}{\mathit{L}}
\newcommand{\Sp}{\mathrm{X}}
\newcommand{\mea}{\mathfrak{m}}
\newcommand{\mms}{(\Sp, \dist, \mea)}
\newcommand{\mmso}{(\Sp_1, \dist_1, \mea_1)}
\newcommand{\mmst}{(\Sp_2, \dist_2, \mea_2)}
\newcommand{\proba}{\mathscr{P}}
\newcommand{\real}{\mathbb{R}}
\newcommand{\N}{\mathbb{N}}
\newcommand{\W}{\mathit{W}^{1,2}}
\newcommand{\WT}{\mathit{W}^{2,2}}
\newcommand{\Tan}{\mathit{T}\mathrm{X}}
\newcommand{\ener}{\mathsf{E}}
\newcommand{\hess}{\mathrm{Hess}}
\newcommand{\Div}{{\rm div}}
\newcommand{\Test}{\rm{Test}}
\newcommand{\F}{{\sf T}}
\newcommand{\D}{\mathrm{D}}
\newcommand{\Leb}{\mathscr{L}}
\newcommand{\Bor}{\mathscr{B}}
\newcommand{\Sclass}{\mathcal{S}^2}
\newcommand{\loc}{\mathsf{loc}}
\newcommand{\bpi}{\boldsymbol{\pi}}
\newcommand{\mae}{\mea\text{-a.e.}}
\newcommand{\Fl}{{\rm Fl}}
\DeclarePairedDelimiter{\abs}{\lvert}{\rvert}
\DeclarePairedDelimiter{\norma}{\lVert}{\rVert}
\def\Xint#1{\mathchoice 
  {\XXint\displaystyle\textstyle{#1}}% 
  {\XXint\textstyle\scriptstyle{#1}}% 
  {\XXint\scriptstyle\scriptscriptstyle{#1}}% 
  {\XXint\scriptscriptstyle\scriptscriptstyle{#1}}% 
  \!\int} 
\def\XXint#1#2#3{{\setbox0=\hbox{$#1{#2#3}{\int}$} 
  \vcenter{\hbox{$#2#3$}}\kern-.5\wd0}} 
\def\-int{\Xint -}
\numberwithin{equation}{subsection}
\newcommand{\R}{\mathbb{R}}
\newcommand{\Z}{\mathbb{Z}}
\newcommand{\mm}{{\mbox{\boldmath$m$}}}
\newcommand{\ppi}{{\mbox{\boldmath$\pi$}}}
\newcommand{\sfd}{{\sf d}}
\newcommand{\sfh}{{\sf h}}
\newcommand{\Id}{{\rm Id}}                          % Identita
\newcommand{\Kliminf}{K\kern-3pt-\kern-2pt\mathop{\rm lim\,inf}\limits}  % Kuratowski liminf di insiemi
\newcommand{\supp}{\mathop{\rm supp}\nolimits}   % supporto 
\newcommand{\Lip}{\mathop{\rm Lip}\nolimits}          %Lipschitz constant
\renewcommand{\d}{{\mathrm d}}
\newcommand{\restr}[1]{\lower3pt\hbox{$|_{#1}$}} %restrizione funzione
\newcommand{\la}{\left<}                  % brackets
\newcommand{\ra}{\right>}
\newcommand{\eps}{\varepsilon}  
\newcommand{\nchi}{{\raise.3ex\hbox{$\chi$}}}
\newcommand{\limi}{\varliminf}
\newcommand{\lims}{\varlimsup}
\newcommand{\fr}{\penalty-20\null\hfill$\blacksquare$}                      %quadratino nero alla fine del remark, se non vi piace, la cosa migliore e' `svuotare' la macro, cosi' non bisogna intervenire sul testo
\newcommand{\prob}[1]{\mathscr P(#1)}                   % misure di probabilita
\newcommand{\e}{{\rm{e}}}                           % mappa di valutazione, bisogna mettere `a mano' il tempo t
\renewcommand{\mm}{\mathfrak m}                                %misura di riferimento
\renewenvironment{proof}{\removelastskip\par\medskip   % inizio e fine dimostrazione
\noindent{\em proof} \rm}{\penalty-20\null\hfill$\square$\par\medbreak}
\newtheorem{theorem}{Theorem}[section]
\newtheorem{corollary}[theorem]{Corollary}
\newtheorem{lemma}[theorem]{Lemma}
\newtheorem{proposition}[theorem]{Proposition}
\newtheorem{assumption}[theorem]{Assumption}
\newtheorem{thmdef}[theorem]{Theorem/Definition}
\newtheorem{definition}[theorem]{Definition}
\newtheorem{remark}[theorem]{Remark}
\newcommand{\test}[1]{{\rm Test}(#1)}
\newcommand{\X}{{\rm X}}
\newcommand{\h}{{\sfh}}
\renewcommand{\ae}{{\textrm{\rm{-a.e.}}}}
\newcommand{\RCD}{{\sf RCD}}
\newcommand{\sqb}[1]{[#1]}
\newcommand{\lip}{{\rm lip}}
\newcommand{\M}{{\mathscr M}}
\newcommand{\HS}{{\lower.3ex\hbox{\scriptsize{\sf HS}}}}
\renewcommand{\H}[1]{{\rm Hess}(#1)}
\renewcommand{\div}{{\rm div}}
\newcommand{\dmin}{{\sf dim}_{\rm min}(\X)}
\newcommand{\dmax}{{\sf dim}_{\rm max}(\X)}
\title{Recognizing the flat torus among $\RCD^*(0,N)$ spaces via the study of the first cohomology group}
\author{Nicola Gigli \thanks{SISSA, Trieste. email: ngigli@sissa.it} \quad Chiara Rigoni  \thanks{SISSA, Trieste. email: crigoni@sissa.it}}
\begin{document}  

\maketitle

\begin{abstract}
We prove that if the dimension of the first cohomology group of a $\RCD^*(0,N)$ space is $N$, then the space is a flat torus.

This generalizes a classical result due to Bochner to the non-smooth setting and also provides a first example where the  study of the cohomology groups in such synthetic framework leads to geometric consequences.
\end{abstract}

\tableofcontents

\section{Introduction}
A classical result due to Bochner concerning manifolds with non-negative Ricci curvature is:
\begin{theorem}[Bochner]\label{thm:boch}
Let $M$ be a compact, smooth and connected  Riemannian manifold with non-negative Ricci curvature. Then:
\begin{itemize}
\item[i)] The dimension of the first cohomology group is bounded above by the dimension of the manifold,
\item[ii)] If these two dimensions are equal, then $M$ is a flat torus.
\end{itemize}
\end{theorem}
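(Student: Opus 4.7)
The plan is to combine Hodge theory with the Bochner--Weitzenb\"ock identity. By the Hodge theorem on a compact Riemannian manifold, the first de Rham cohomology group is isomorphic to the space $\mathcal H^1(M)$ of harmonic $1$-forms, so it suffices to bound $\dim \mathcal H^1(M)$ from above by $n := \dim M$ and to analyze the equality case.

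For part (i), I would use the Bochner--Weitzenb\"ock identity: for any $1$-form $\omega$
$$\tfrac12 \Delta |\omega|^2 \;=\; |\nabla \omega|^2 \;-\; \langle \Delta_H\omega,\omega\rangle \;+\; \mathrm{Ric}(\omega^\sharp,\omega^\sharp),$$
with $\Delta$ the analyst's Laplacian and $\Delta_H = dd^\ast + d^\ast d$ the Hodge Laplacian. When $\omega$ is harmonic the middle term vanishes, and non-negativity of Ricci gives $\Delta|\omega|^2 \ge 0$, so $|\omega|^2$ is subharmonic. Compactness of $M$ and the maximum principle then force $|\omega|^2$ to be constant, whence $|\nabla\omega|\equiv 0$ and $\mathrm{Ric}(\omega^\sharp,\omega^\sharp)\equiv 0$. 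Thus every harmonic $1$-form is parallel. A parallel form is uniquely determined by its value at any single point, so $\dim\mathcal H^1(M) \le n$, proving (i).

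For part (ii), the assumption gives $n$ linearly independent parallel $1$-forms $\omega_1,\dots,\omega_n$, whose duals form a global parallel frame of $TM$. Existence of a parallel frame implies vanishing of the Riemann curvature tensor, so $M$ is flat. The universal cover $\tilde M$ is then a complete, simply connected, flat Riemannian manifold, hence isometric to $\mathbb R^n$. The lifts $\tilde\omega_i$ are parallel on $\mathbb R^n$ and thus have constant coefficients; after a linear change of coordinates we may arrange $\tilde\omega_i = dx^i$. Deck transformations of $\pi_1(M)$ act by isometries of $\mathbb R^n$ preserving each $dx^i$, which forces them to be pure translations. By compactness of $M$, $\pi_1(M)$ acts cocompactly and properly discontinuously on $\mathbb R^n$ by translations, hence as a full-rank lattice $\Lambda \subset \mathbb R^n$, and $M \cong \mathbb R^n/\Lambda$ is a flat torus.

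The core technical step, and the one whose transfer to the synthetic $\RCD^\ast(0,N)$ framework of the paper is expected to be the main obstacle, is the rigidity argument in (ii): passing from the pointwise datum of a parallel coframe to the global metric conclusion that $M$ is a flat torus. This relies on integrating the parallel forms on the universal cover and analyzing the deck group action, both of which require structure that is not directly available on non-smooth spaces. By contrast, the subharmonicity argument in (i) is the step that most naturally admits a synthetic analogue, via a suitable Bochner-type inequality built from the $\Gamma_2$-calculus.
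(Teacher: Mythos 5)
Your proof is correct and follows exactly the route the paper sketches for this classical result: Hodge theory plus the Bochner--Weitzenb\"ock identity to show harmonic $1$-forms are parallel for part (i), and the parallel coframe, flatness, universal cover, and deck-group-as-translations argument for part (ii). The paper itself only outlines this argument in the introduction (deferring details to Petersen's book), and your write-up fills in precisely those details with no gaps.
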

The key observation that leads to $(i)$ is the fact that under the stated assumptions every harmonic 1-form must be parallel and thus determined by its value at any given point $x\in M$. Since  Hodge's theorem grants that the $k$-th cohomology group is isomorphic to the space of harmonic k-forms, the claim follows.

For $(ii)$, the typical argument starts with the observation that if an $n$-dimensional manifold admits $n$ independent parallel vector fields, then such manifold must be flat. Hence its universal cover, equipped with the pullback of the metric tensor, must be the Euclidean space and the fundamental group $\pi_1(M)$ acts on it via isometries. Since $M\sim \R^n/\pi_1(M)$, all is left to show is that $\pi_1(M)\sim\Z^n$, which can be obtained by `soft' considerations about the structure of the isometries of $\R^n$ and the fact that $\R^n/\pi_1(M)$ is, by assumption, compact and smooth (see e.g. \cite{Petersen16} for the details).

\bigskip

This paper is about the generalization of the above result to the non-smooth setting of $\RCD^*(0,N)$ spaces (\cite{AmbrosioGigliSavare11-2}, \cite{Gigli12}). Our starting point is the paper \cite{Gigli14} by the first author where a differential calculus on such spaces has been built. Among other things, the vocabulary proposed there allows to speak of vector fields, $k$-forms, covariant derivative,  Hodge laplacian and cohomology groups $H^k_{\rm dR}$. In particular, a quite natural version of Hodge's theorem exists in this non-smooth setting, so that we know that cohomology classes are in correspondence with their unique harmonic representative. The basic structure around which the theory is built - and which offers a counterpart for the space of $L^2$ section of a normed vector bundle over a smooth manifold - is the one of $L^2$-normed $L^\infty$-module.

In searching for an analogous of Theorem \ref{thm:boch} in the nonsmooth setting, one thing to discuss is the notion of dimension. To this aim, let us recall that given a generic $L^2$-normed $L^\infty$-module $\mathscr M$ over a space $(\X,\sfd,\mm)$, there exists a unique, up to negligible sets, Borel partition $(E_i)_{i\in\N\cup\{\infty\}}$  of $\X$ such that for every $i\in\N$ the restriction of $M$ to $E_i$ has dimension $i$, and for no $F\subset E_\infty$ with positive measure the restriction of $\mathscr M$ to $F$ has finite dimension. We consider such partition $(E_i)$ for $M$ being the tangent module of the space $\X$ and think to the dimension of tangent module on $E_i$ as the dimension of the space $\X$ on the same set. Hence we  shall call $\dmin$ (resp. $\dmax$) the minimal (resp. supremum) of indexes $i\in\N\cup\{\infty\}$ such that $\mm(E_i)>0$. Then in \cite{Gigli14} the following result has been obtained:
\begin{theorem}\label{thm:hodgercd}
Let $(\X,\sfd,\mm)$ be a $\RCD^*(0,\infty)$ space. Then ${\rm dim}(H^k_{\rm dR})(\X)\leq \dmin$. 
\end{theorem}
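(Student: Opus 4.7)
The plan is to adapt Bochner's classical strategy to the module-theoretic framework of \cite{Gigli14}. I focus on the case $k=1$, which is the one needed for the main theorem. First, I would invoke the synthetic Hodge theorem established in that paper, which identifies $H^1_{\rm dR}(\X)$ with the space $\Hcal^1$ of $L^2$ harmonic 1-forms, namely those $\omega \in L^2(\Cotan)$ with $\Delta_H \omega = 0$. This reduces the claim to $\dim_{\R} \Hcal^1 \leq \dmin$.

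The analytic core is the synthetic Bochner inequality for 1-forms available on $\RCD^*(0,\infty)$ spaces, which in weak form reads
\[
\tfrac12 \Delta |\omega|^2 \geq |\nabla \omega|_{\HS}^2 - \langle \Delta_H \omega, \omega\rangle + \ric(\omega,\omega),
\]
and the curvature-dimension assumption yields $\ric \geq 0$. Applied to $\omega \in \Hcal^1$ (so that $\Delta_H \omega = 0$) and then tested against a suitable exhausting sequence of nonnegative cutoff functions (needed in order to address the possibly noncompact setting, exploiting that $|\omega|^2 \in L^1$), one should conclude $\int |\nabla \omega|_{\HS}^2 \, \d\mm \leq 0$, whence $\nabla \omega = 0$: every harmonic 1-form is parallel.

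The dimension bound then follows from a rigidity/Gram matrix argument. Given $m$ linearly independent parallel forms $\omega_1,\dots,\omega_m \in \Hcal^1$, the pointwise inner products $c_{ij}:=\langle \omega_i,\omega_j\rangle$ satisfy
\[
\d c_{ij} = \langle \nabla \omega_i,\omega_j\rangle + \langle \omega_i,\nabla \omega_j\rangle = 0,
\]
and hence equal constants $C_{ij}$ on the (connected) space $\X$. If the constant matrix $(C_{ij})$ were singular with null vector $a \in \R^m$, then $\omega := \sum_j a_j\omega_j$ would have $|\omega|^2 \equiv 0$, contradicting linear independence; so $(C_{ij})$ has rank $m$. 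On the other hand, $\mm$-a.e. on the positive-measure set $E_{\dmin}$ the tangent module has local dimension $\dmin$, so the pointwise Gram matrix of the $\omega_i$ has rank at most $\dmin$. Combining the two conclusions gives $m \leq \dmin$.

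The main obstacle I expect is the Bochner-to-parallelism step: promoting the weak synthetic Bochner inequality to the pointwise identity $\nabla \omega = 0$ for a merely $L^2$ harmonic 1-form. This requires both that Bochner be available in a form strong enough to be tested against 1-forms (and not only against functions) inside the module framework, and that a clean cutoff/integration-by-parts scheme closes the estimate without uncontrolled boundary terms in the noncompact regime. The remaining ingredients — the Hodge decomposition, the good behavior of $\nabla$, and the interpretation of pointwise Gram-matrix rank in terms of local dimension of the tangent module — all sit comfortably inside the calculus developed in \cite{Gigli14}.
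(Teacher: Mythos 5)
Your proposal is correct and follows essentially the same route as the paper, which for this statement simply cites \cite{Gigli14} and sketches exactly your argument: harmonic $\Rightarrow$ parallel via the Bochner inequality with $K=0$, then a Gram-matrix/independence count against the local dimension of the cotangent module on $E_{\dmin}$. The step you flag as the main obstacle --- closing the integration by parts to obtain $\int|\nabla\omega|_\HS^2\,\d\mm\le 0$ without boundary terms --- is already available in integrated form as inequality \eqref{eq:energiah} in the preliminaries, so no cutoff scheme is actually needed.
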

The proof of this fact closely follows the argument for point $(i)$  in Theorem \ref{thm:boch}: every harmonic form is proved to be parallel, so that the dimension of the first cohomology group is bounded by how many independent (co)vector fields we can find on any region of our space. Notice that the compactness assumption is not present because, in the terminology of \cite{Gigli14}, harmonic forms are by definition in $L^2$.%; this also explains why if we can find any non-zero harmonic form, then the measure of the whole space must be finite.

We also point out that a priori such result might be empty, in the sense that without any additional assumption it is very possible that $\dmin=\infty$. In fact, the natural assumption on the space $\X$ is not that it is a $\RCD^*(0,\infty)$ space, but rather a $\RCD^*(0,N)$ one: given that the number $N\in[1,\infty]$ represents, in some sense, an upper bound for the dimension of the space we expect it to bound from above $\dmax$. This is indeed the case, as it has been proved by Han in \cite{Han14} (see also \cite{GP16} for an alternative argument):
\begin{theorem}\label{thm:han}
Let $(\X,\sfd,\mm)$ be a $\RCD^*(K,N)$ space. Then ${\dmax}\leq N$.
\end{theorem}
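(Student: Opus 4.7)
The plan is to connect the algebraic dimension of the tangent module at a point to the geometric dimension of blow-ups at that point, and then to bound the latter by $N$ via Bishop-Gromov volume comparison, which is available on any $\RCD^*(K,N)$ space.

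First, I would invoke the Mondino-Naber structure theorem for $\RCD^*(K,N)$ spaces: at $\mm$-a.e.\ $x\in\X$ there exists an integer $k(x)$ such that some pmGH-tangent cone at $x$ is isometric to $(\R^{k(x)},|\cdot|,c\Leb^{k(x)})$ for some $c>0$. This partitions $\X$, modulo $\mm$-null sets, into measurable regular sets $\mathcal{R}_k$ on which $k(x)=k$. The goal is then twofold: (a) show that $k(x)\leq N$ at $\mm$-a.e.\ $x$, and (b) identify the essential dimension of the tangent module $L^2(\Tan)$ on $\mathcal{R}_k$ with $k$.

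Claim (a) is a consequence of the stability of the curvature-dimension condition under pmGH convergence: any pmGH-tangent cone of an $\RCD^*(K,N)$ space is itself an $\RCD^*(0,N)$ space, and a Euclidean space of dimension $k$ can satisfy this condition only if $k\leq N$ (by an elementary volume growth computation in the Bishop-Gromov inequality). Claim (b) is more substantial. The bound $\dim L^2(\Tan)\leq k$ on $\mathcal{R}_k$ should follow by blow-up: any finite family of test vector fields rescales to a family on the tangent cone whose maximal pointwise rank cannot exceed the rank of the tangent module of $\R^{k}$, which is $k$. For the reverse inequality one must exhibit $k$ pointwise-independent sections of $L^2(\Tan)$ on $\mathcal{R}_k$, which can be done by mollifying suitable distance functions to reference points chosen so that their gradients approximate a coordinate frame on the Euclidean tangent cone, and then using a Lebesgue-type argument to propagate independence from the limit back to small scales on $\X$.

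Combining (a) and (b), on each $\mathcal{R}_k$ the tangent module has dimension $k\leq N$, hence $\dmax\leq N$. I expect (b) to be the main obstacle: it requires transferring information about linear independence of vector fields across a blow-up limit, and therefore relies crucially on the stability of the differential calculus of $L^2$-normed $L^\infty$-modules under pmGH-convergence, together with a careful study of how test functions behave under rescaling.
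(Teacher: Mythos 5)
You should first be aware that the paper does not prove this statement: Theorem \ref{thm:han} is quoted from \cite{Han14}, with \cite{GP16} indicated as an alternative argument, so there is no internal proof to compare yours against. Your route --- decompose $\X$ via the Mondino--Naber structure theorem into regular sets $\mathcal R_k$ with $k\le N$, then identify the dimension of $L^2(\Tan)$ on $\mathcal R_k$ with $k$ --- is essentially the \cite{GP16} alternative. Han's original argument is genuinely different and much lighter for this particular purpose: it is purely analytic, exploiting the self-improvement of the dimensional Bochner inequality on $\RCD^*(K,N)$ spaces, where the interplay between the term $(\Delta f)^2/N$ and the pointwise bound $|\H f|_\HS^2\ge (\tr \H f)^2/n$ on a region where the tangent module has dimension $n$ forces $n\le N$ almost everywhere --- no structure theory, no blow-ups, no convergence of modules.

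As written, your key step (b) has a genuine gap. To get $\dim L^2(\Tan)\le k$ on $\mathcal R_k$ you must show that $k+1$ sections which are independent in the $L^\infty$-module sense on a positive-measure subset of $\mathcal R_k$ lead to a contradiction; but independence is not preserved under blow-up, since the Gram matrix $\big(\la V_i,V_j\ra\big)$ may degenerate in the limit. Closing this requires (i) a Lusin/Egorov reduction to a compact set of positive measure on which the sections have the form $\sum_j g_j\nabla f_j$ with controlled data and the Gram determinant is bounded below by some $\delta>0$, and (ii) a convergence theory for Sobolev functions and $L^2$-normed modules along the pmGH blow-up sequence (Mosco convergence of Cheeger energies, strong convergence of gradients) strong enough to carry that lower bound to the tangent cone, where it contradicts the fact that the tangent module of $\R^k$ has dimension $k$. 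You acknowledge the need for such stability but do not supply it, and it is precisely the hard content of the known proofs; \cite{GP16} in fact bypasses blow-ups altogether by using the $(1+\eps)$-bi-Lipschitz charts of the Mondino--Naber atlas to build an explicit isomorphism with the pullback of the Euclidean (co)tangent module. Finally, the ``reverse inequality'' you discuss (exhibiting $k$ independent sections on $\mathcal R_k$) is not needed for $\dmax\le N$: only the upper bound matters, and since $k\le N$ is already part of the Mondino--Naber statement, even the cruder bound $\dim\le N$ on each $\mathcal R_k$ would suffice.
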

%The proof of this theorem relies on the study of the $N-$Ricci tensor on $\X$ (see also \cite{Sturm14}).

Coupling Theorems \ref{thm:hodgercd} and \ref{thm:han} we deduce that:
\begin{proposition}
Let $(\X,\sfd,\mm)$ be a $\RCD^*(0,N)$ space. Then  ${\rm dim}(H^k_{\rm dR})(\X)\leq N$.
\end{proposition}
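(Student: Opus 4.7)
The plan is to directly combine the two theorems just stated. Since the curvature-dimension condition $\RCD^*(0,N)$ entails in particular $\RCD^*(0,\infty)$, I would first invoke Theorem \ref{thm:hodgercd} to obtain
$$
{\rm dim}(H^k_{\rm dR})(\X)\leq\dmin.
$$
Next I would apply Theorem \ref{thm:han} with $K=0$ to get $\dmax\leq N$. Finally, by construction one has $\dmin\leq\dmax$, since both quantities are extracted from the same Borel partition $(E_i)_{i\in\N\cup\{\infty\}}$ of $\X$ into regions of constant essential dimension of the tangent module. Chaining these three inequalities produces the desired bound $\dim(H^k_{\rm dR})(\X)\leq N$.

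The only minor point worth keeping in mind is that Theorem \ref{thm:hodgercd} is informative only when $\dmin$ is finite: if $\mm$ were concentrated on $E_\infty$ the bound would be empty. The finite-dimensional upper bound $\dmax\leq N<\infty$ rules this out, since it forces $\mm(E_\infty)=0$ and hence at least one $E_i$ with $i\in\N$ carries positive $\mm$-measure, so that $\dmin$ is indeed well defined as a natural number not exceeding $N$. There is no substantial obstacle here: the proposition is a bookkeeping exercise combining the two inequalities already at our disposal, and the real content of the paper is the converse direction, namely that equality in this bound forces the space to be a flat torus.
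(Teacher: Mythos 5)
Your argument is exactly the one the paper intends: it states the proposition by ``coupling Theorems \ref{thm:hodgercd} and \ref{thm:han}'', which is precisely your chain $\dim(H^k_{\rm dR})(\X)\leq\dmin\leq\dmax\leq N$. The additional remark that $\dmax\leq N<\infty$ forces $\mm(E_\infty)=0$ and hence $\dmin\in\N$ is a correct and worthwhile clarification, but does not change the substance of the proof.
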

This statement is a perfect analogue of point $(i)$ in Theorem \ref{thm:boch}. The aim of the present manuscript is to complete this analogy between the smooth and nonsmooth setting by proving:
\begin{theorem}\label{thm:mainintro}
Let $(\X,\sfd,\mm)$ be a $\RCD^*(0,N)$ space  such that ${\rm dim}(H^k_{\rm dR})(\X)= N$ (so that in particular $N$ is integer). Then it is isomorphic to the flat $N$-dimensional torus.
\end{theorem}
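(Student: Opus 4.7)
The plan is to reproduce the three-step argument of the smooth setting inside the differential calculus of \cite{Gigli14}, using the parallel global frame coming from harmonic $1$-forms to trivialize the tangent module and generate an isometric $\R^N$-action on $\X$.

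First, I combine the Hodge-type isomorphism with Theorem \ref{thm:hodgercd} and Theorem \ref{thm:han} to extract $N$ linearly independent harmonic $1$-forms $\omega_1,\dots,\omega_N$; by the Bochner identity underlying Theorem \ref{thm:hodgercd} each $\omega_i$ is parallel, and since $\dmax\leq N$ while the $\omega_i$'s are independent, the tangent module has dimension exactly $N$ on $\X$. After a linear change of basis I may assume that $(\omega_i)$ is a parallel orthonormal coframe, whose dual $(X_i)_{i=1}^N$ is a parallel orthonormal frame of vector fields. Because each $\omega_i$ is parallel its pointwise norm is constantly $1$, so the requirement $\omega_i\in L^2$ already forces $\mm(\X)<\infty$.

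Second, I construct an isometric action of $\R^N$ on $\X$ out of the $X_i$'s. Vanishing of $\nabla X_i$ yields both vanishing divergence and vanishing symmetric derivative, so the Regular Lagrangian Flow machinery for $\RCD$ spaces produces a one-parameter flow $(\F^t_i)_{t\in\R}$ of measure-preserving metric isometries of $(\X,\sfd)$. Parallelism also implies $[X_i,X_j]=\nabla_{X_i}X_j-\nabla_{X_j}X_i=0$, so the flows pairwise commute, and the map
\begin{equation*}
\Phi\colon\R^N\times\X\to\X,\qquad \Phi\bigl((t_1,\dots,t_N),x\bigr):=\F^{t_1}_1\circ\cdots\circ\F^{t_N}_N(x),
\end{equation*}
is a continuous action of the additive group $\R^N$ on $\X$ by measure-preserving isometries.

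Third, I identify $\X$ with $\R^N/\Gamma$ for a suitable lattice $\Gamma$. For any $o\in\X$ the orbit map $\Phi_o(t):=\Phi(t,o)$ is a local isometry because its infinitesimal behaviour at every point is encoded by the orthonormal frame $(X_i)$; in particular every $\R^N$-orbit is open in $\X$. Since orbits partition $\X$, each is also closed, and connectedness of $\X$ forces the action to be transitive, i.e.\ $\Phi_o(\R^N)=\X$. The stabilizer $\Gamma:=\Phi_o^{-1}(o)\subset\R^N$ is then a closed subgroup; $\Phi_o$ being a local isometry forces $\Gamma$ to be discrete; and $\mm(\X)<\infty$ together with the fact that $\Phi_o$ descends to an isometric bijection $\R^N/\Gamma\to(\X,\sfd)$ forces $\Gamma$ to be cocompact. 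Hence $\Gamma$ is a rank-$N$ lattice in $\R^N$, and $\X$ is isomorphic as an $\RCD$ space to the flat $N$-torus $\R^N/\Gamma$.

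The main obstacle is the second step: promoting the infinitesimal parallelism of each $X_i$ to a bona fide one-parameter group of everywhere-defined metric isometries, and checking that two such groups commute \emph{pointwise} rather than merely $\mm$-a.e.\ \emph{and} with $\mm$-a.e.\ defined maps. This requires combining the Regular Lagrangian Flow formalism with the extra rigidity afforded by $\nabla X_i=0$ to extract a continuous, everywhere-defined representative, and is where most of the technical work of the paper is likely to be concentrated.
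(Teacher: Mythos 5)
Your first two steps follow the paper's outline: extract a parallel pointwise-orthonormal frame $X_1,\dots,X_N$ of harmonic vector fields, deduce $\mm(\X)<\infty$, show the Regular Lagrangian Flows are measure-preserving isometries, and assemble them into an action of $\R^N$. (Even here, note that the implication ``$\nabla X_i=0$ for all $i$ $\Rightarrow$ the flows commute'' is not off-the-shelf in the nonsmooth setting: the paper does not compute $[X_i,X_j]$ at all, but instead proves $\d\,\Fl^{(X)}_s(Y)=Y$ in Lemma \ref{lem:dFl} and deduces $\Fl^X_t\circ\Fl^Y_s=\Fl^{tX+sY}_1$ from uniqueness of Regular Lagrangian Flows in Theorem \ref{thm:sumflow}. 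You flag this as technical work, which is fair.)

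The genuine gap is in your third step, at the sentence ``the orbit map $\Phi_o$ is a local isometry because its infinitesimal behaviour at every point is encoded by the orthonormal frame $(X_i)$; in particular every $\R^N$-orbit is open.'' This is precisely the statement the paper cannot obtain by soft means, and your justification does not survive scrutiny: a Regular Lagrangian Flow is an $\mm$-a.e.\ defined object, a single orbit is $\mm$-negligible, and there is no pointwise differential of $\underline a\mapsto\F(x,\underline a)$ along it. What comes for free is only the \emph{upper} bound $\sfd(x,\F(x,\underline a))\le|\underline a|$ (the $1$-Lipschitz estimate \eqref{eq:1lip}); the lower bound needed for openness of orbits --- that one cannot reach $y$ from $x$ without moving at least $\sfd(x,y)$ in the parameter --- is exactly the missing content, and parallelism of the frame does not supply it. The paper replaces this step by a transitivity argument of a completely different nature (Proposition \ref{prop:suriet}): it develops calculus on the product $\X\times\R^N$ to prove a representation formula $\Fl^{(v_t)}_s(x)=\F(x,\underline A_s(x))$ for \emph{arbitrary} sufficiently regular Regular Lagrangian Flows on $\X$ (Proposition \ref{prop:reprfor}), then feeds into it the velocity fields of the $W_2$-geodesic from the uniform measure on a ball $B_R(y)$ to $\delta_y$, whose regularity comes from the Lewy--Stampacchia inequality (Proposition \ref{prop:LS}). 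This shows that a.e.\ point of the ball can be carried to (near) $y$ by the action with parameter of size at most $\sfd(x,y)$, and a limiting argument upgrades this to every $x,y$. Only then do discreteness of the stabilizer, the isometry property of the quotient map, and finiteness of $\mm$ combine as in your final paragraph. Without a substitute for this transitivity argument, your proof does not close.
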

Here `isomorphic' means that there exists a measure preserving isometry from the torus  equipped with its Riemannian distance and a constant multiple of the induced volume measure to our space. Unlike the proof of Theorem \ref{thm:hodgercd} which closely mimics the original one, here we cannot  adapt the `smooth arguments' to the current setting: the problem is that it is not known whether $\RCD$ spaces admit a universal cover or not (there are some results in this direction - see \cite{MondinoWei16} - but it is unclear to us whether they can effectively be used for our current purposes).

We therefore have to  pursue a different strategy, the starting steps of our argument being:
\begin{itemize}
\item[-] We start studying the flow of an harmonic vector field on our space and, using the fact that in particular such vector field must be   parallel and divergence-free, we prove that  in accordance with the smooth case such flow is made of measure preserving isometries. Here by `flow' we intend in fact `Regular Lagrangian Flow' in the sense of Ambrosio-Trevisan \cite{Ambrosio-Trevisan14} who adapted to the setting of $\RCD$ spaces the analogous notion developed by Ambrosio in \cite{Ambrosio04} in connection with the Di Perna-Lions theory \cite{DiPerna-Lions89}. We remark that our appears to be the first application of Ambrosio-Trevisan theory to vector fields which are not gradients.
\item[-] We prove that given two such vector fields $X,Y$, for their flows $\Fl^X_t$ and $\Fl^Y_s$ we have the formula $\Fl^X_t\circ \Fl^Y_s=\Fl^{tX+sY}_1$ for any $t,s\in\R$.
\item[-] Our assumption on the space $(\X,\sfd,\mm)$ grants that there are $N$ independent and orthogonal vector fields $X_1,\ldots,X_N$ which are parallel and divergence-free, hence we can define the  map $\F:\X\times\R^N\to \X$  by
\[
(x,a_1,\ldots,a_N)\quad\mapsto\quad \Fl^{X_1}_{a_1}(\cdots \Fl^{X_N}_{a_N}(x)).
\]
What previously proved ensures that this map can be seen as an action of $\R^N$ on $\X$ by isomorphisms.
\end{itemize}
Analyzing the properties of the map $\F$ will lead to the desired isomorphism with the torus. The hardest part will be the proof of the fact that the action is transitive: to obtain this  will require a sharpening of the calculus tools available in the nonsmooth setting and, in particular, we will analyze the structure of the (co)tangent modules on product spaces which we believe to be interesting on its own.

\bigskip

We conclude recalling that Honda proved in \cite{Honda14} that the dimension of the first cohomology group is upper semicontinuous along a non-collapsing sequence of manifolds with same dimension and a uniform lower bound on the Ricci. This result hints at the possibility of obtaining an almost rigidity statement of our theorem in the context of $\RCD$ spaces, which would informally read as
\begin{quote} 
`if a $\RCD$ space almost fulfils the assumption of Theorem \ref{thm:mainintro}, then it is mGH-close to a flat torus'.
\end{quote}
In this direction it is worth to emphasize that in the smooth category more is known: as Colding proved in \cite{Colding97},  $N$-dimensional manifolds with Ricci $\geq -\eps$ and first cohomology group of dimension $N$, not only must be mGH-close to the torus, but also  homeomorphich to it if $N\neq 3$ and homotopic if $N=3$. Such topological information is out of reach of simple arguments based on the mGH-compactness of the class of $\RCD$ spaces.

\section{Preliminaries}
To keep the presentation at reasonable length we shall assume the reader familiar with the notions of Sobolev functions (see \cite{Cheeger00}, \cite{Shanmugalingam00}, \cite{AmbrosioGigliSavare11}), of differential calculus on metric measure spaces (see \cite{Gigli14}, \cite{Gigli17}) and with the notion of Regular Lagrangian Flow on metric measure spaces (\cite{Ambrosio-Trevisan14}, \cite{AT15}). Here we shall only recall a few facts mainly to fix the notation.

For us a metric measure space $\mms$ will always be a complete and separable metric space equipped with a reference non-negative (and non-zero) Borel measure $\mm$ which is finite on bounded sets and with full support, i.e. $\mm(\Omega)>0$ for every non-empty open set $\Omega\subset \X$. This latter assumption is not really necessary, but simplifies some statements.

\subsection{$L^2$- and $L^0$-normed modules and basis of differential calculus}
\begin{definition}[$L^2(\X)$-normed modules]
A $L^2(\X)$-normed $L^\infty(\X)$-module, or simply a $L^2(\X)$-normed module, is a structure $(\M,\|\cdot\|,\cdot,|\cdot|)$ where
\begin{itemize}
\item[i)] $(\M,\|\cdot\|)$ is a Banach space
\item[ii)] $\cdot$ is a bilinear map from $L^\infty(\X)\times \M$ to $\M$, called multiplication by $L^\infty(\X)$ functions, such that
\begin{subequations}
\label{eq:moltf}
\begin{align}
\label{eq:m0}f\cdot(g\cdot v)&=(fg)\cdot v,\\
\label{eq:m1}{\mathbf 1}\cdot v&=v,
\end{align}
\end{subequations}
for every $v\in \M$ and $f,g\in L^\infty(\X)$, where ${\bf 1}$ is the function identically equal to 1.
\item[iii)] $|\cdot|$ is a map from $\M$ to $L^2(\X)$, called pointwise norm, such that 
\begin{subequations}
\label{eq:pontnorm}
\begin{align}
\label{eq:normgeq} |v|&\geq 0\qquad\mm\ae\\
\label{eq:normpunt} |fv|&=|f|\,|v|\qquad\mm\ae\\
\label{eq:recnorm} \|v\|&=\sqrt{\int|v|^2\,\d\mm},
\end{align}
\end{subequations}
\end{itemize}
An isomorphism between  two $L^2(\X)$-normed modules is a linear bijection which preserves the norm, the product with $L^\infty(\X)$ functions and the pointwise norm.
\end{definition}
\begin{definition}[$L^0$-normed module]\label{def:l0mod} A $L^0$-normed module is a structure $(\M,\tau,\cdot,|\cdot|)$ where:
\begin{itemize}
\item[i)] $\cdot$ is a bilinear map, called multiplication with $L^0$ functions, from $L^0(\X)\times \M$ to $\M$ for which \eqref{eq:m0}, \eqref{eq:m1} hold for any $f\in L^0(\X)$, $v\in \M$,
\item[ii)] $|\cdot|:\M\to L^0(\X)$, called pointwise norm, satisfies \eqref{eq:normgeq} and \eqref{eq:normpunt}  for any $f\in L^0(\X)$, $v\in \M$,
\item[iii)] for some Borel partition $(E_i)$ of $\X$ into sets of finite $\mm$-measure, $\M$ is complete w.r.t.\ the distance 
\begin{equation}
\label{eq:m0dist}
\sfd_0(v,w):=\sum_i\frac{1}{2^i\mm(E_i)}\int_{E_i}\min\{1,|v-w|\}\,\d\mm
\end{equation}
and $\tau$ is the topology induced by the distance.
\end{itemize}
An isomorphims of $L^0$-normed modules is a linear homeomorphism preserving the pointwise norm and the multiplication with $L^0$-functions.
\end{definition}
It is readily checked that the choice of the partition $(E_i)$ in $(iii)$ does not affect the completeness of $\M$ nor the topology $\tau$.

\begin{thmdef}[$L^0$ completion of a module]\label{thm:defl0} Let $\M$ be a $L^2$-normed module. Then there exists a unique couple $(\M^0,\iota)$, where $\M^0$ is a $L^0$-normed module and $\iota:\M\to \M^0$ is linear, preserving the pointwise norm and  with dense image. 

Uniqueness is intended up to unique isomorphism, i.e.:  if $(\tilde \M^0,\tilde\iota)$ has the same properties, then there exists a unique isomorphism $\Phi:\M^0\to \tilde \M^0$ such that $\tilde\iota=\Phi\circ\iota$.
\end{thmdef}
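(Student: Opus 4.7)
The plan is to realize $\M^0$ as the abstract metric completion of $\M$ with respect to a suitable $L^0$-type distance, and then extend every algebraic operation by uniform continuity, exploiting the density of $\iota(\M)$. Fix a Borel partition $(E_i)_{i\in\N}$ of $\X$ into sets of positive and finite $\mm$-measure and define $\sfd_0$ on $\M$ by the very same formula \eqref{eq:m0dist}. The subadditivity $\min\{1,a+b\}\leq \min\{1,a\}+\min\{1,b\}$ combined with the pointwise triangle inequality $|v-w|\leq |v-u|+|u-w|$ shows that $\sfd_0$ is a genuine distance on $\M$; moreover the pointwise norm $|\cdot|:\M\to L^0(\X)$ is $1$-Lipschitz from $(\M,\sfd_0)$ to $L^0(\X)$ (endowed with its canonical $L^0$-distance). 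Let $\M^0$ be the abstract metric completion of $(\M,\sfd_0)$ and let $\iota$ denote the canonical isometric inclusion, which has dense image by construction.

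Next I would extend the algebraic structures. Addition on $\M$ is $1$-Lipschitz in each variable w.r.t.\ $\sfd_0$ and real scalar multiplication is continuous, so $\M^0$ inherits a structure of topological $\R$-vector space. For each $f\in L^\infty(\X)$ the map $v\mapsto f\cdot v$ is Lipschitz with constant depending only on $\|f\|_{L^\infty}$, hence extends uniquely to a continuous operation $L^\infty(\X)\times \M^0\to \M^0$; the pointwise norm extends uniquely to $|\cdot|:\M^0\to L^0(\X)$; and the identities \eqref{eq:m0}, \eqref{eq:m1}, \eqref{eq:normgeq}, \eqref{eq:normpunt} (with $f$ bounded) pass to the limit by continuity along any $\iota(\M)$-approximating sequence. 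To define the multiplication by an arbitrary $g\in L^0(\X)$, set $g_n:=(g\wedge n)\vee(-n)$ and observe that, thanks to the already-established identity $|g_n\cdot v-g_m\cdot v|=|g_n-g_m|\,|v|$ on $\M^0$ and dominated convergence applied to $\min\{1,|g_n-g_m|\,|v|\}\leq 1$, the sequence $(g_n\cdot v)_n$ is $\sfd_0$-Cauchy. Define $g\cdot v:=\lim_n g_n\cdot v$; the full set of axioms of Definition \ref{def:l0mod} is then recovered by reducing to the bounded case on the sub-level sets of $|g|$ and passing to the limit.

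For uniqueness, suppose $(\tilde\M^0,\tilde\iota)$ is another such couple. Since $\tilde\iota$ preserves the pointwise norm, it also preserves $\sfd_0$, hence the assignment $\iota(v)\mapsto \tilde\iota(v)$ is an isometry from the dense subset $\iota(\M)\subset \M^0$ onto the dense subset $\tilde\iota(\M)\subset \tilde\M^0$. It therefore extends uniquely to an isometric bijection $\Phi:\M^0\to \tilde\M^0$, and the joint continuity on both sides of addition, scalar multiplication, pointwise norm and multiplication by bounded functions (plus the truncation definition of $L^0$-multiplication) forces $\Phi$ to be an isomorphism of $L^0$-normed modules.

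I expect the main obstacle to be not any individual step but the bookkeeping required to push the algebraic identities involving unbounded $L^0$-functions through the truncation limit. In particular, verifying that $|g\cdot v|=|g|\,|v|$ and $(g_1 g_2)\cdot v=g_1\cdot(g_2\cdot v)$ hold $\mm$-a.e.\ for arbitrary $g,g_1,g_2\in L^0(\X)$ involves commuting two successive truncation procedures, and the clean way to do this is to first establish a \emph{locality principle} on $\M^0$: for every Borel $A\subseteq \X$ the element $\chi_A\cdot v$ is well-defined and satisfies $|\chi_A\cdot v|=\chi_A|v|$. Once locality is available, every identity reduces to the bounded setting on the sets $\{|g_1|\leq n\}\cap\{|g_2|\leq n\}$, and the conclusion follows by letting $n\to\infty$.
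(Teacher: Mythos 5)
Your construction is correct and follows the same completion-via-$\sfd_0$ scheme that the paper itself relies on: the statement is given without proof in the text, but the paper's proof of the closely analogous Theorem \ref{pbmod} proceeds exactly as you do --- define the distance $\sfd_0$ from the pointwise norm, take the completion, extend the algebraic operations by continuity (with multiplication by general $L^0$ functions recovered from the bounded case by truncation), and obtain uniqueness by forcing the value of $\Phi$ on the dense image and extending by continuity. The only point deserving an explicit word in your uniqueness step is that $\tilde\iota$ is only assumed linear and pointwise-norm preserving, so you should first observe that these two properties already force $\tilde\iota(\chi_A v)=\chi_A\,\tilde\iota(v)$ for Borel $A$ (your ``locality principle'' applied to $\tilde\iota$ itself, via $|\tilde\iota(v)-\tilde\iota(\chi_Av)|=\chi_{A^c}|v|$), after which compatibility of $\Phi$ with $L^\infty$- and then $L^0$-multiplication follows by approximation with simple functions.
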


\subsection{Sobolev spaces for locally integrable objects}
Given a metric measure space $\mms$, by  $L^2_\loc(\X)$ we mean the space of (equivalence classes w.r.t.\ $\mm$-a.e.\ equality of) Borel functions $f:\Sp\to\R$ such that $\nchi_Bf\in L^2(\Sp)$ for every bounded Borel set $B\subset \Sp$. A curve $t\mapsto f_t\in L^2_\loc(\Sp)$ will be called continuous (resp.\ absolutely continuous, Lipschitz, $C^1$) provided for any bounded Borel set $B\subset \Sp$ the curve $t\mapsto\nchi_B f_t\in L^2(\Sp)$ is continuous (resp.\ absolutely continuous, Lipschitz, $C^1$).

\bigskip

Recall that $\ppi\in\prob{C([0,1],\Sp)}$ is called {\bf test plan} provided 
\[
\begin{split}
(\e_t)_*\ppi&\leq C\mm\qquad\forall t\in[0,1],\qquad\text{ for some $C>0$},\\
\iint_0^1|\dot\gamma_t|^2\,\d t\,\d\ppi&<\infty.
\end{split}
\]
The {\bf Sobolev class} $\Sclass(\Sp)$ (resp.\ $\Sclass_\loc(\Sp)$) is the space of all Borel functions $f:\Sp\to \R$ for which there exists $G\in L^2(\Sp)$ (resp.\ $G\in L^2_\loc(\Sp)$) non-negative, called weak upper gradient, such that
\[
\int|f(\gamma_1)-f(\gamma_0)|\,\d\ppi(\gamma)\leq \iint_0^1G(\gamma_t)|\dot\gamma_t|\,\d t\,\d\ppi(\gamma) \qquad\text{ for every test plan }\ppi.
\]
It turns out that $f\in \Sclass_\loc(\Sp)$ and $G$ is a weak upper gradient  if and only if for every test plan $\ppi$ we have that for $\ppi$-a.e.\ $\gamma$ the map $t\mapsto f(\gamma_t)$ is in $W^{1,1}(0,1)$ and 
\begin{equation}
\label{eq:sobcurve}
|\frac\d{\d t}f(\gamma_t)|\leq G(\gamma_t)|\dot\gamma_t|\qquad a.e.\ t\in[0,1].
\end{equation}
From this characterization it follows that there exists a minimal weak upper gradient in the $\mm$-a.e.\ sense: it will be called {\bf minimal weak upper gradient} and denoted by $|\D f|$. With a simple cut-off argument, \eqref{eq:sobcurve} also shows that $f\in\Sclass_\loc(\Sp)$ if and only if $\eta f\in\Sclass(\Sp)$ for every $\eta$ Lipschitz and with bounded support.

The {\bf Sobolev space} $\W(\Sp)$ (resp.\ $\W_\loc(\Sp)$) is defined as $L^2\cap\Sclass(\Sp)$ (resp.\ $L^2_\loc\cap\Sclass_\loc(\Sp)$) and again one can check that $f\in \W_\loc(\Sp)$ if and only if $\eta f\in \W(\Sp)$  for every $\eta$ Lipschitz and with bounded support. $\W(\Sp)$  is a Banach space when endowed with the norm
\[
\|f\|_{\W(\Sp)}^2:=\|f\|_{L^2(\Sp)}^2+\||\D f|\|_{L^2(\Sp)}^2.
\]
$\mms$ is said {\bf infinitesimally Hilbertian} provided $\W(\Sp)$ is a Hilbert space.

Among others, minimal weak upper gradients have the following important locality property:
\[
|\D f|=|\D g|\qquad \mm-a.e.\text{ on }\{f=g\}\qquad\qquad\forall f,g\in\Sclass_\loc(\Sp).
\]
Also, it will be useful to keep in mind that
\begin{equation}
\label{eq:apprsloc}
\begin{split}
\forall f\in\Sclass_\loc(\Sp)\text{ there exists $(f_n)\subset \Sclass(\Sp)$ such that $\mm(\Sp\setminus\cup_n\{f=f_n\})=0$}\\
\text{and for every $n\in\N$ the function $f_n$ is bounded and with bounded support.}
\end{split}
\end{equation}
Such sequence can be obtained noticing that for $f\in\Sclass_\loc(\Sp)$ the truncated function $(f\wedge (-c))\lor c$ also belongs to $\Sclass_\loc(\Sp)$ for every $c>0$ and then proceeding with a cut-off argument.

From the notion of minimal weak upper gradient it is possible to extract the one of differential via the following result:
\begin{thmdef}\label{thm:defd}
There exists a unique couple $(L^0(T^*\Sp),\d)$ with $L^0(T^*\Sp)$ being a $L^0(\Sp)$-normed module and $\d:\Sclass_\loc(\Sp)\to L^0(T^*\Sp)$ linear and such that
\begin{itemize}
\item[i)] $|\d f|=|\D f|$ $\mm$-a.e.\ for every $f\in \Sclass_\loc(\Sp)$,
\item[ii)] $L^0(T^*\Sp)$ is generated by $\{\d f : f\in\Sclass_\loc(\Sp)\}$, i.e.\ $L^0$-linear combinations of objects of the form $\d f$ are dense in $L^0(T^*\Sp)$.
\end{itemize}
Uniqueness is intended up to unique isomorphism, i.e.\ if $(\M,\d')$ is another such couple, then there is a unique isomorphism $\Phi \colon L^0(T^*\Sp)\to \M$ such that $\Phi(\d f)=\d'f$ for every $f\in\Sclass_\loc(\Sp)$.
\end{thmdef}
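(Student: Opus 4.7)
The plan is to first build an $L^2$-cotangent module associated with the Sobolev space $W^{1,2}(\X)$ following a standard pre-cotangent-module construction, pass to its $L^0$-completion via Theorem/Definition \ref{thm:defl0}, and only afterwards extend the differential to $\Sclass_\loc(\X)$ by means of the truncation procedure recorded in \eqref{eq:apprsloc} together with the locality of minimal weak upper gradients. Uniqueness will follow from a routine universal-property argument once existence is in place.

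\textbf{Existence.} Let $\mathcal V$ consist of the finite formal sums $\omega = \sum_{i=1}^n \chi_{A_i}\,\d f_i$, where $(A_i)_{i=1}^n$ is a Borel partition of $\X$ and $f_i \in W^{1,2}(\X)$. Declare two such sums equivalent when, after passing to a common refinement $(C_k)$ with representatives $(h_k)$ and $(h_k')$, one has $|\D(h_k - h_k')| = 0$ $\mm$-a.e.\ on $C_k$ for every $k$. Equip the quotient with the pointwise norm $|\omega| := \sum_i \chi_{A_i} |\D f_i|$ and the $L^2$-norm $\|\omega\|^2 := \int |\omega|^2 \,\d\mm$. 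The locality of $|\D \cdot|$ ensures that $|\cdot|$ descends to equivalence classes, the subadditivity $|\D(f+g)| \leq |\D f| + |\D g|$ together with a common-refinement procedure delivers the triangle inequality for the pointwise norm, and the natural addition $[\sum_i \chi_{A_i} \d f_i] + [\sum_j \chi_{B_j} \d g_j] := [\sum_{i,j} \chi_{A_i \cap B_j} \d(f_i + g_j)]$ and multiplication by simple $L^\infty$-functions are well-defined. Completing $\mathcal V/{\sim}$ in $\|\cdot\|$ and extending the bilinear $L^\infty$-action by continuity produces an $L^2$-normed module $L^2(T^*\X)$. Finally, apply Theorem/Definition \ref{thm:defl0} to obtain its $L^0$-completion $L^0(T^*\X)$ together with the canonical embedding $\iota \colon L^2(T^*\X) \hookrightarrow L^0(T^*\X)$.

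\textbf{Extension to $\Sclass_\loc$ and verification of (i), (ii).} For $f \in \Sclass_\loc(\X)$, fix $(f_n) \subset \Sclass(\X)$ as in \eqref{eq:apprsloc} and set $B_n := \{f = f_n\} \setminus \bigcup_{k<n}\{f = f_k\}$; since each $f_n$ is bounded with bounded support, each $\d f_n$ is already defined in $L^2(T^*\X)$. Define
\[
\d f := \sum_n \chi_{B_n} \, \d f_n \in L^0(T^*\X),
\]
the series converging in the $L^0$-topology since $(B_n)$ partitions $\X$ up to a null set. The locality identity $|\D f| = |\D g|$ $\mm$-a.e.\ on $\{f = g\}$ shows that $\d f$ is independent of the chosen sequence and that $|\d f| = \sum_n \chi_{B_n} |\D f_n| = |\D f|$ $\mm$-a.e., proving (i). Linearity on $\Sclass_\loc(\X)$ follows from linearity on $\Sclass(\X)$ applied on each piece of the product partition induced by two functions, again via locality. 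Property (ii) is immediate: the $L^2$-linear span of $\{\d f : f \in W^{1,2}(\X)\}$ is dense in $L^2(T^*\X)$ by construction, hence its $L^0$-linear span is dense in $L^0(T^*\X)$, and $W^{1,2}(\X) \subset \Sclass_\loc(\X)$.

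\textbf{Uniqueness and main obstacle.} Given another such couple $(\M, \d')$, define $\Phi(\sum_i \chi_{A_i} \d f_i) := \sum_i \chi_{A_i} \d' f_i$ on the $L^0$-span of $\{\d f\}$. Both $(L^0(T^*\X), \d)$ and $(\M, \d')$ force the pointwise norm of a simple sum to equal $\sum_i \chi_{A_i} |\D f_i|$ (by property (i) and the $L^\infty$-compatibility of the pointwise norm), so $\Phi$ is well-defined and preserves the pointwise norm; (ii) supplies density, so $\Phi$ extends to a unique $L^0$-isomorphism intertwining $\d$ and $\d'$. The main obstacle is the well-posedness of the pre-cotangent construction: one must verify that the prescribed pointwise norm on formal sums is simultaneously independent of the representative and a genuine pointwise seminorm satisfying the triangle inequality. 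This reduces, via common partitions, to the pointwise locality of $|\D \cdot|$ and to the pointwise subadditivity $|\D(f + g)| \leq |\D f| + |\D g|$, both of which are readily extracted from the characterization \eqref{eq:sobcurve} of minimal weak upper gradients along test plans.
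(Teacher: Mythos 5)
Your proposal is correct and follows essentially the route the paper itself indicates: the pre-cotangent construction via formal sums $\sum_i\chi_{A_i}\,\d f_i$ quotiented through the locality of $|\D\cdot|$, completion, and uniqueness by density of the generated span, exactly as in the model proof of Theorem \ref{pbmod}. Your choice to first build the $L^2$-normed module over $W^{1,2}(\X)$, take its $L^0$-completion via Theorem/Definition \ref{thm:defl0}, and then extend $\d$ to $\Sclass_\loc(\X)$ by the truncation in \eqref{eq:apprsloc} is precisely the identification the paper records in its remarks (a)--(c) after the statement, so no genuinely different argument is involved.
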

Some remarks:
\begin{itemize}
\item[a)] Using the approximation property \eqref{eq:apprsloc} one can show that $\Sclass_\loc(\Sp)$ can be replaced with either one of $\Sclass(\Sp)$, $\W(\Sp)$ in the above statement
\item[b)] If one chooses to replace   $\Sclass_\loc(\Sp)$ with either $\Sclass(\Sp)$ of $\W(\Sp)$, then it is also possible to replace the $L^0$-normed module with a $L^2$-normed module in the statement and in this case in $(ii)$ `$L^0$-linear' should be replaced by `$L^\infty$-linear' (notice that the choice of the module also affects the topology considered, whence the possibility of having two different uniqueness results).  The proof is unaltered: compare for instance Theorem \ref{pbmod} with the construction of pullback module given in \cite{Gigli14} and \cite{Gigli17}. 
\item[c)]  Call $(L^2(T^*\Sp),\underline \d)$ the outcome of Theorem \ref{thm:defd} written for $L^2$-normed modulus and one of the spaces $\Sclass(\Sp),\W(\Sp)$. Then its $L^0$-completion can be fully identified with the couple $(L^0(T^*\Sp),\d)$ given by Theorem \ref{thm:defd}  in the sense that: there is a unique linear map $\iota:L^2(T^*\Sp)\to L^0(T^*\Sp)$ sending $\underline \d f$ to $\d f$ and preserving the pointwise norm, moreover such map has dense image.

This is trivial to check from the definitions and for this reason we won't use a distinguished notation for the differential coming from the `$L^2$' formulation of the statement. 
\end{itemize}
Let us now discuss other differentiation operators defined for objects with $L^2_\loc$ integrability.

The space of vector fields $L^0(T\Sp)$ is defined as the dual of the $L^0$-normed module $L^0(T^*\Sp)$. Equivalently, it is the $L^0$-completion of the dual $L^2(T\Sp)$ of the $L^2$-normed module $L^2(T^*\Sp)$ (see \cite{Gigli14}, \cite{Gigli17}). $L^2_\loc(T\Sp)\subset L^0(T\Sp)$ is the space of $X$'s such that $|X|\in L^2_\loc(\Sp)$.

We say that  $X \in \Lint^2_\loc (\Tan)$ has {\bf divergence} in $\Lint^2_\loc$, and  write $X \in \D(\Div_{\loc})$, if there exists $h \in \Lint^2_\loc(\Sp)$ such that for every $f \in \W(\Sp)$ with bounded support it holds
\[  
\displaystyle \int f h \ \dint \mea = - \int \dint f(X) \ \dint \mea.  
\]
In this case we put $\div X:= h$.

\bigskip

Let us now assume that $\mms$ is infinitesimally Hilbertian, so that the pointwise norms in $L^0(T^*\Sp)$, $L^0(T\Sp)$ induce pointwise scalar products. Recall that in this case the modules $\Lint^0(T^*\Sp)$ and $\Lint^0(T\Sp)$ are canonically isomorphic via the Riesz (musical) isomorphism
\begin{equation}
\label{eq:music}
\flat \colon \Lint^0(\mathit{T} \Sp) \rightarrow \Lint^0(\mathit{T}^\ast \Sp) \qquad \ \text{and} \qquad \ \sharp \colon  \Lint^0(\mathit{T}^\ast \Sp) \rightarrow \Lint^0(\mathit{T} \Sp).
\end{equation}
The {\bf gradient} of a function $f\in\W_\loc(\Sp)$ is defined as $\nabla f:=(\d f)^\sharp\in L^2_\loc(T\Sp)$.

We say that $f\in  \Lint^2_\loc(\Sp)$ has {\bf Laplacian} in $L^2_\loc$, and write $f \in \D(\Delta_\loc)$ if there exists $h \in \Lint^2_\loc(\Sp)$ such that for every $g \in \W(\Sp)$ with bounded support it holds
\[ 
\displaystyle \int g  h \,\dint \mea = - \int \la\nabla f,  \nabla g \ra\dint \mea.  
\]
In this case we put $\Delta f:=h$. %It is clear that we have
%\begin{equation}
%\label{eq:divlap}
%f\in D(\Delta_\loc)\quad\Leftrightarrow\quad \nabla f\in D(\div_\loc)\qquad\text{ and in this case }\qquad\Delta f=\div(\nabla f).
%\end{equation}
If $f,h\in L^2(\Sp)$ we shall write $f\in D(\Delta)$ instead of $f\in D(\Delta_\loc)$. It is not hard to check that in this case this notion is equivalent to the more familiar notion of Laplacian as infinitesimal generator of the Dirichlet form 
\[
\ener(f):=\left\{\begin{array}{ll}
\dfrac12\displaystyle{\int|\d f|^2\,\d\mm}&\qquad\text{ if }f\in \W(\Sp),\\
+\infty&\qquad\text{ otherwise}.
\end{array}
\right.
\]
(see \cite{AmbrosioGigliSavare11}, \cite{AmbrosioGigliSavare11-2}).

\bigskip

To continue this introduction, we shall now assume that $\mms$ is a $\RCD(K,\infty)$ space for some $K\in\R$.  A relevant property of Sobolev functions in relations to the metric of such spaces is the following result, proved in \cite{AmbrosioGigliSavare11-2} (see also \cite{Gigli13}, \cite{Gigli13over} for the given formulation):
\begin{theorem}\label{thm:isorcd}
Let $\mmso$ and $\mmst$ be two $\RCD(K,\infty)$ spaces with $\mm_1,\mm_2$ having full support and $T:\Sp_1\to\Sp_2$ and $S:\Sp_2\to\Sp_1$ be Borel maps such that
\[
T\circ S=\Id_{\Sp_2}\quad\mm_2-a.e.\qquad\qquad S\circ T=\Id_{\Sp_1}\quad\mm_1-a.e.,
\]
and
\[
T_*\mm_1=\mm_2\qquad\qquad \ener_{\Sp_1}(f\circ T)=\ener_{\Sp_2}(f)\quad\forall f\in L^2(\Sp_2).
\]
Then, up to modifications in a $\mm_1$-negligible set, $T$ is an isometry.
\end{theorem}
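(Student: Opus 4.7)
The strategy has two main steps: first show that composition with $T$ preserves the pointwise norm of the differential, and then, exploiting the Sobolev-to-Lipschitz property characteristic of $\RCD$ spaces, promote this to a genuinely $1$-Lipschitz modification of $T$ which turns out to be an isometry.

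\emph{Step 1: pointwise preservation of weak upper gradients.} Since both spaces are infinitesimally Hilbertian, polarization of the assumed energy equality $\ener_{\Sp_1}(f\circ T)=\ener_{\Sp_2}(f)$ yields
$$\int \la\nabla(f\circ T),\nabla(g\circ T)\ra\,\d\mm_1=\int\la\nabla f,\nabla g\ra\,\d\mm_2$$
for every $f,g\in\W(\Sp_2)$. Applying this with $f$ and $g$ replaced by $f\phi$ and $g\phi$ for a test $\phi\in\W(\Sp_2)\cap L^\infty$, expanding both sides via the Leibniz rule, and cancelling the cross terms using the previous identity, one obtains
$$\int (\phi\circ T)^2\bigl[\la\nabla(f\circ T),\nabla(g\circ T)\ra-\la\nabla f,\nabla g\ra\circ T\bigr]\,\d\mm_1=0.$$
Since $T$ is an $\mm$-preserving a.e.\ bijection (with measurable inverse $S$), functions of the form $\phi\circ T$ for $\phi\in\W\cap L^\infty(\Sp_2)$ span a set dense in the bounded Borel functions on $\Sp_1$, so the bracketed quantity vanishes $\mm_1$-a.e. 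Choosing $f=g$ gives $|\D(f\circ T)|=|\D f|\circ T$ $\mm_1$-a.e.\ for every $f\in\W(\Sp_2)$, and a cut-off argument extends this to $f\in\W_\loc(\Sp_2)$.

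\emph{Step 2: construction of the isometric modification.} Fix a countable dense set $\{y_n\}\subset\Sp_2$ and, for $R>0$, put $\phi_n^R:=\sfd_2(y_n,\cdot)\wedge R$. Each $\phi_n^R$ is bounded and $1$-Lipschitz, hence in $\W_\loc\cap L^\infty(\Sp_2)$ with $|\D\phi_n^R|\le 1$ $\mm_2$-a.e., so Step 1 gives $|\D(\phi_n^R\circ T)|\le 1$ $\mm_1$-a.e. The Sobolev-to-Lipschitz property, which holds in the $\RCD(K,\infty)$ space $\Sp_1$, then produces a $1$-Lipschitz representative $\tilde\phi_n^R$ of $\phi_n^R\circ T$. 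For each $x\in\Sp_1$ the values $(\tilde\phi_n^R(x))_{n,R}$ uniquely determine a point $\tilde T(x)\in\Sp_2$, namely the unique point whose truncated distances to the $y_n$ match these values; uniqueness follows from the separability of $\Sp_2$ and the fact that distances to a dense set determine points, while existence is granted because $T(x)$ itself does the job for $\mm_1$-a.e.\ $x$ and the constraints propagate to all of $\Sp_1$ by continuity of the $\tilde\phi_n^R$ (together with a Cauchy-type argument using a sequence in the full-measure set converging to $x$). The resulting map $\tilde T$ is $1$-Lipschitz, coincides with $T$ $\mm_1$-a.e., and preserves measures. Running the identical argument on $S$ yields a $1$-Lipschitz modification $\tilde S$; the identities $\tilde T\circ\tilde S=\Id_{\Sp_2}$ and $\tilde S\circ\tilde T=\Id_{\Sp_1}$ hold $\mm$-a.e., hence everywhere by continuity and the full-support assumption, so $\tilde T$ is a $1$-Lipschitz bijection with $1$-Lipschitz inverse, i.e.\ an isometry.

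The main obstacle is the second step: promoting a.e.-defined Sobolev pullbacks to honest pointwise-defined Lipschitz objects and then assembling a single coherent map $\tilde T$ out of them. This is where the $\RCD$ hypothesis is genuinely used, via the Sobolev-to-Lipschitz property that upgrades an a.e.\ gradient bound to a pointwise Lipschitz estimate; the synthesis of the countable family of representatives $\tilde\phi_n^R$ into a Borel map with well-defined target is then a separability/distance-determining argument that has to be handled with some care but does not require further geometric input.
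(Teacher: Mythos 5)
The paper does not actually prove Theorem \ref{thm:isorcd} --- it is recalled from \cite{AmbrosioGigliSavare11-2} (see also \cite{Gigli13}, \cite{Gigli13over}) --- but your argument is essentially the proof given in those references: polarization and localization of the Dirichlet form to obtain $|\D (f\circ T)|=|\D f|\circ T$ $\mm_1$-a.e., followed by the Sobolev-to-Lipschitz property of $\RCD(K,\infty)$ spaces applied to truncated distance functions, and a symmetric argument for $S$ (after noting that the hypotheses force $S$ to preserve the energy as well). The only step worth writing out explicitly is the cross-term cancellation in Step 1: it works because
\[
\la\nabla(f\phi),\nabla(g\phi)\ra=\phi^2\la\nabla f,\nabla g\ra+\la\nabla(fg\phi),\nabla\phi\ra,
\]
so the extra term is again of the exact form covered by the unweighted polarized identity applied to the pair $(fg\phi,\phi)$, both members of which pull back multiplicatively under $T$.
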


Recall that the class of {\bf test functions} (see \cite{Savare13}) is defined as
\[
\test\X:=\Big\{f\in L^\infty\cap W^{1,2}(\Sp)\cap D(\Delta)\ :\ |\d f|\in L^\infty(\Sp),\ \Delta f\in W^{1,2}(\Sp)\Big\}.
\]
Crucial properties of test functions are that they form an algebra and that $|\d f|^2\in W^{1,2}(\Sp)$ for $f\in \test\Sp$  (see \cite{Savare13}). For our discussion, it is also useful to keep in mind that
\begin{equation}
\label{eq:testsupp}
\begin{split}
&\text{for every $K\subset\Omega\subset \Sp$ with $\sfd(x,y)\geq c$ for some $c>0$ and every $x\in K$, $y\in\Omega^c$}\\
&\text{there exists $f\in\test\X$ with $\supp(f)\subset\Omega$ and identically 1 on $K$},
\end{split}
\end{equation}
see e.g.\  \cite{AmbrosioMondinoSavare13-2}.

With this said, we can define the space $W^{2,2}_\loc(\Sp)$ as the space of $f\in W^{1,2}_\loc(\Sp)$ for which there exists $A\in L^2_\loc((T^*)^{\otimes 2}\Sp)$ such that
\begin{equation}
\label{eq:defhess}
\begin{split}
2\int& hA(\nabla g,\nabla \tilde g)\,\d\mm\\
&=-\int\la\nabla f,\nabla g\ra\div(h\nabla\tilde g)+\la\nabla f,\nabla \tilde g\ra\div(h\nabla g)+h\la\nabla f,\nabla(\la\nabla g,\nabla\tilde g\ra)\ra\,\d\mm
\end{split}
\end{equation}
for every $g,\tilde g,h\in\test\X$ with bounded support. In this case we call $A$ the {\bf Hessian} of $f$ and denote it by  $\H f$. If $f\in \W(\Sp)$ and $\H f\in  L^2((T^*)^{\otimes 2}\Sp)$ we say that $f\in W^{2,2}(\Sp)$. Noticing that the two sides of \eqref{eq:defhess} are continuous in $h$ w.r.t.\ the $W^{1,2}$-norm and using property \eqref{eq:testsupp} it is easy to check that this definition of $W^{2,2}(\Sp)$ coincides with the one given in \cite{Gigli14}. Also, $W^{2,2}(\Sp)$ is a separable Hilbert space when endowed the norm
\[
\|f\|_{W^{2,2}(\Sp)}^2:=\|f\|_{L^2(\Sp)}^2+\||\D f|\|_{L^2(\Sp)}^2+\||\H f|_\HS\|_{L^2(\Sp)}^2.
\]

We recall (see \cite{Gigli14}) that $D(\Delta)\subset W^{2,2}(\Sp)$ and
\begin{equation}
\label{eq:boundhess}
\int|\H f|_\HS^2\,\d\mm\leq \int |\Delta f|^2-K|\d f|^2\,\d\mm,
\end{equation}
so that in particular $\test\Sp\subset W^{2,2}(\Sp)$. We then define $H^{2,2}(\Sp)$ as the $W^{2,2}$-closure of $\test\Sp$ and similarly$H^{2,2}_\loc(\Sp)$ as the $W^{2,2}_\loc(\Sp)$-closure of $\test\Sp$, i.e.: $f\in H^{2,2}_\loc(\Sp)\subset W^{2,2}_\loc(\Sp)$ provided there exists a sequence $(f_n)\subset \test\Sp$ such that $f_n,\d f_n,\H {f_n}$ converge to $f,\d f,\H f$ in $L^2_\loc(\Sp),L^2_\loc(T^*\Sp),L^2_\loc((T^*)^{\otimes 2}\Sp)$ respectively. 

\bigskip

The space of Sobolev vector fields $W^{1,2}_{C,\loc}(T\Sp)$ is defined as the space of $X\in L^2_\loc(T\Sp)$ for which there is $T\in L^2(T^{\otimes 2}\Sp)$ such that
\[
\int h T(\nabla g,\nabla\tilde g)\,\d\mm=\int -\la X,\nabla\tilde g\ra\div(h\nabla g) +h\H {\tilde g}(X,\nabla g)\,\d\mm
\]
for every $h,g,\tilde g\in\test\Sp$ with bounded support. In this case we call $T$ the {\bf covariant derivative} of $X$ and denote it as $\nabla X$. If $|X|,|\nabla X|_\HS\in L^2(\Sp)$ we shall say that $X\in W^{1,2}_C(T\Sp)$; again, it is not hard to check that this definition of $W^{1,2}_C(T\Sp)$ coincides with the one given in \cite{Gigli14}. Vector fields of the form $g\nabla f$ for $f,g\in\test\X$ are in $W^{1,2}_C(T\X)$ and the $W^{1,2}_C$-closure of the linear span of such vector fields will be denoted  $H^{1,2}_{C}(T\Sp)$. The space $H^{1,2}_{C,\loc}(T\X)$ is then equivalently defined either as the subspace of $L^2_\loc(T\X)$ made of vectors $X$ of such that $fX\in H^{1,2}_C(T\X)$ for every $f\in\test\X$ with bounded support or as the $W^{1,2}_{C,\loc}$-closure of $H^{1,2}_C(T\X)$, i.e.\ as the space of vector fields $X\in W^{1,2}_{C,\loc}(T\X)$ such that there is $(X_n)\subset H^{1,2}_C(T\X)$ such that  $X_n\to X$ and $\nabla X_n\to\nabla X$ in $L^2_\loc(T\X)$ and $L^2_\loc(T^{\otimes 2}\X)$ as $n\to\infty$.

The `local versions' of the exterior differential,  codifferential and  Hodge Laplacian are defined in the same way. For us it will be relevant to know that
\begin{equation}
\label{eq:lhodge}
f\in D(\Delta_\loc)\quad\Leftrightarrow\quad \d f\in D(\Delta_{H,\loc})\qquad\text{and in this case }\qquad\d\Delta f=-\Delta_{H}\d f 
\end{equation}
where the minus sign is due to the usual sign convention $\Delta f=-\Delta_Hf$; this is a direct consequence of the analogous identity valid in for objects in $D(\Delta),D(\Delta_H)$ and a cut-off argument. Also, we shall use the fact that if $X^\flat\in D(\Delta_{H,\loc})$ then $X\in W^{1,2}_\loc(T\X)$ and the Bochner inequality
\begin{equation}
\label{eq:bocgen}
\Delta\frac{|X|^2}2\geq |\nabla X|_\HS^2-\langle X^\flat,\Delta_HX^\flat\rangle+K|X|^2
\end{equation}
holds in the weak form, i.e.:
\[
\int \frac{|X|^2}2\,\Delta g\,\d\mm\geq \int g\big(|\nabla X|_\HS^2-\langle X^\flat,\Delta_HX^\flat\rangle+K|X|^2\big)\,\d\mm
\]
for every $g\in\test\X$ non-negative and with bounded support. As before, this follows with a cut-off argument starting with the analogous inequalities valid for the various objects in $L^2$. We remark that \eqref{eq:lhodge} for a vector field $X\in D(\Delta_H)$ implies, in particular, by integration that
\begin{equation}
\label{eq:energiah}
\int |\nabla X|_\HS^2\,\d\mm\leq \int \langle X^\flat,\Delta_HX^\flat\rangle-K|X|^2\,\d\mm
\end{equation}
and recall that
\begin{equation}
\label{eq:hlapdiv}
X^\flat\in D(\Delta_{H})\text{ with }\Delta_{H}X=0\quad\Leftrightarrow \quad\left\{\begin{array}{l}
X^\flat\in D(\d),\ X\in D(\div)\\
\text{ with }\d X^\flat =0,\ \div X=0
\end{array}
\right.
\end{equation}
indeed $\Leftarrow$ follows from the definition of $\Delta_H$ and $\Rightarrow$ from the identity
\[
\int \langle X^\flat,\Delta_HX^\flat\rangle\,\d\mm=\int |\d X^\flat|^2+|\div X|^2\,\d\mm.
\]

\subsection{Regular Lagrangian Flows and continuity equation}
Here we recall the concept of Regular Lagrangian Flow of Sobolev vector fields on $\RCD$ spaces, which provides a non-smooth analogous of the concept of solution of the ODE
\[
\gamma_t'=X_t(\gamma_t).
\]
This notion has been introduced by Ambrosio (\cite{Ambrosio04}) in the Euclidean space in the context of Di Perna-Lions theory (\cite{DiPerna-Lions89}). Then Ambrosio-Trevisan (\cite{Ambrosio-Trevisan14}, see also \cite{AT15}) showed that theory could be developed in $\RCD$ spaces. 
\begin{definition}[Regular Lagrangian flow]\label{def:RLF}
Let $(X_t)\in L^2([0,1],L^2_\loc(T\X))$. We say that $\Fl^{(X_t)}:[0,1]\times\X\to \X$ is a Regular Lagrangian Flow for $(X_t)$ provided:
\begin{itemize}
\item[i)] There is $C>0$ such that
\begin{equation}
\label{eq:bdflow}
(\Fl^{(X_t)}_s)_*\mm\leq C\mm\qquad\forall s\in[0,1].
\end{equation}
\item[ii)] For $\mm$-a.e.\ $x\in\X$ the curve $[0,1]\ni s\mapsto \Fl^{(X_t)}_s(x)\in \X$ is continuous and such that $\Fl^{(X_t)}_0(x)=x$.
\item[iii)] for every $f\in W^{1,2}(\X)$ we have: for $\mm$-a.e.\ $x\in\X$ the function $s\mapsto f(\Fl^{(X_t)}_s(x))$ belongs to $W^{1,1}(0,1)$ and it holds
\begin{equation}
\label{eq:deffl}
\frac{\d}{\d s}f(\Fl^{(X_t)}_s(x))=\d f(X_s)(\Fl^{(X_t)}_s(x))\qquad \mm\times\mathcal L^1\restr{[0,1]}\ae (x,s).
\end{equation}
\end{itemize}
\end{definition}
We shall also deal with Regular Lagrangian Flows for  $X_t\equiv X$: in this case, we shall mainly consider flows defined on the whole $[0,\infty)$ rather than only on $[0,1]$ and the various statements below should be read with this implicit assumption in mind.

\medskip

Notice that it is due to property $(i)$ that property $(iii)$ makes sense. Indeed, for given $X_s\in L^2(T\X)$ and $f\in W^{1,2}(\X)$ the function $\d f(X_s)\in L^1(\X)$ is only defined $\mm$-a.e., so that (part of) the role of  \eqref{eq:bdflow} is to grant that $\d f(X_s)\circ F_s$ is  well defined $\mm$-a.e.. Also, it is known  (see Lemmas 7.4 and 9.2 in \cite{Ambrosio-Trevisan14}) that  for $\mm$-a.e.\ $x$ the curve $s\mapsto \Fl^{(X_t)}_s(x) $ is absolutely continuous and for its metric speed $|\dot\Fl^{(X_t)}_s(x)|$ we have
\begin{equation}
\label{eq:metrsp}
|\dot\Fl^{(X_t)}_s(x)|=|X_s|(\Fl^{(X_t)}_s(x))\qquad a.e.\ s\in[0,1].
\end{equation}
We shall always work with vector fields $(X_t)$ such that 
\begin{equation}
\label{eq:linftyspeed}
|X_t|\in L^\infty([0,1],L^\infty(\X)).
\end{equation} 
In this case, a simple property, valid for any $p\in[1,\infty)$, of Regular Lagrangian Flows that we shall occasionally use is the following:
\begin{equation}
\label{eq:contl2loc}
f_s\to f\quad\text{ in $\Lint^p(\Sp)$ as $s\to 0$}\qquad\Rightarrow\qquad  f_s\circ\Fl^{(X_t)}_s\to f\quad\text{ in $\Lint^p(\Sp)$ as $s\to 0$}.
\end{equation}
Indeed, %by the very definition of $L^2_\loc(\Sp)$-convergence, up to a cut-off argument we can assume that the convergence of $f_t$ to $f$ is in $L^2(\Sp)$. Then notice that   
for any Lipschitz function $\tilde f$ with bounded support we have
\[
\begin{split}
\| f_s\circ\Fl^{(X_t)}_{s}-f\|_{L^p}&\leq\| f_s\circ\Fl^{(X_t)}_{s}-\tilde f\circ \Fl^{(X_t)}_{s}\|_{L^p}+\| \tilde t\circ\Fl^{(X_t)}_{s}-\tilde f\|_{L^p}+\| \tilde f -f\|_{L^p}\\
\text{by \eqref{eq:bdflow}}\qquad\qquad&\leq (C^{1/p}+1)\| f_t-\tilde f\|_{L^p}+\| \tilde f\circ\Fl^{(X_t)}_{t}-\tilde f\|_{L^p}.
\end{split}
\]
Since for $\mm$-a.e.\ $x\in\Sp$ the curve $s\mapsto \Fl^{(X_t)}_s(x)$ is Lipschitz and with metric speed bounded above by $ \||X_t|\|_{L^\infty_t(L^\infty_x)}$, we have  
\[
|\tilde f(\Fl^X_{t}(x))-\tilde f(x)|\leq |t|\Lip(\tilde f)\||X_t|\|_{L^\infty_t(L^\infty_x)}\qquad\mm-a.e.\ x\in\Sp,
\]
hence  letting $t\to 0$ in the above we obtain
\[
\lims_{t\to 0}\| f_t\circ\Fl^X_{t}-f\|_{L^2}\leq ( C^{1/p}+1)\| \tilde f -f\|_{L^p},
\]
so that \eqref{eq:contl2loc} follows from the arbitrariness of $\tilde f$ and the density of Lipschitz functions with bounded support in $L^p(\Sp)$.

We shall mainly use Regular Lagrangian Flows via the following characterization:
\begin{proposition}\label{prop:rlfeq}
Let $(X_t)\in L^2([0,1],L^2_\loc(T\X))$ be such that \eqref{eq:linftyspeed} holds and $F:[0,1]\times\X\to \X$ be a Borel map satisfying $(i),(ii)$ of Definition \ref{def:RLF}. Then the following are equivalent:
\begin{itemize}
\item[a)] (iii) of Definition \ref{def:RLF} holds, i.e.\ $F$ is a Regular Lagrangian flow for $(X_t)$.
\item[b)] for every $f\in W^{1,2}(\X)$ the map $[0,1]\ni t\mapsto f\circ F_t\in L^2(\X)$ is Lipschitz and for a.e.\ $t\in[0,1]$ it holds
\begin{equation}
\label{eq:derl2}
\lim_{h\to 0}\frac{f\circ F_{t+h}-f\circ F_t}{h}=\d f(X_t)\circ F_t,
\end{equation}
the limit being intended in $L^2(\X)$.
\item[c)] for every $f\in W^{1,2}_\loc(\X)$ the map $[0,1]\ni t\mapsto f\circ F_t\in L^2_\loc(\X)$ is Lipschitz and for a.e.\ $t\in[0,1]$ \eqref{eq:derl2} holds with the limit being intended in $L^2_\loc$.
\end{itemize}
Moreover, if these holds and $X_t\equiv X$, then `Lipschitz' in $(b),(c)$ can be replaced by `$C^1$' and \eqref{eq:derl2} holds for every $t\in[0,1]$.
\end{proposition}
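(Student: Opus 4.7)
My plan is to chain (a)$\Rightarrow$(b)$\Rightarrow$(c)$\Rightarrow$(a) and treat the autonomous addendum at the end; throughout, set $L:=\||X_t|\|_{L^\infty([0,1],L^\infty(\X))}$, which is finite by \eqref{eq:linftyspeed}.

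For (a)$\Rightarrow$(b), I would fix $f\in\W(\X)$, integrate the pointwise derivative \eqref{eq:deffl} to get $f\circ F_t-f\circ F_s=\int_s^t \d f(X_r)\circ F_r\,\d r$ $\mm$-a.e., and bound the $L^2$-norm via Cauchy--Schwarz, \eqref{eq:bdflow} and $|\d f(X_r)|\leq L|\d f|$, obtaining $\|f\circ F_t-f\circ F_s\|_{L^2}\leq\sqrt{C}\,L\,\|\d f\|_{L^2}|t-s|$, which is the asserted Lipschitz bound. The derivative identity \eqref{eq:derl2} then follows by recognising the difference quotient as the averaged Bochner integral of the map $r\mapsto\d f(X_r)\circ F_r\in L^\infty([0,1];L^2(\X))$ and invoking the Lebesgue differentiation theorem for vector-valued integrals.

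For (b)$\Rightarrow$(c), I would argue by cutoff. Given $f\in\W_\loc(\X)$ and a bounded Borel $B\subset\X$, the metric speed identity \eqref{eq:metrsp} shows that, for $\mm$-a.e.\ $x\in B$ and every $t\in[0,1]$, the point $F_t(x)$ lies in the closed $L$-neighbourhood $K$ of $B$. Choosing via \eqref{eq:testsupp} a function $\eta\in\test\X$ with bounded support and $\eta\equiv 1$ on $K$ gives $\eta f\in\W(\X)$; applying (b) to $\eta f$, together with the locality of the differential on $\{\eta=1\}\supset K$, yields the Lipschitz continuity of $t\mapsto f\circ F_t$ and the identity \eqref{eq:derl2} in $L^2(B)$. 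Letting $B$ exhaust $\X$ delivers (c).

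For (c)$\Rightarrow$(a), I would extract the pointwise-in-$x$ $W^{1,1}$-regularity in $s$ via a duality argument. Given $f\in\W(\X)$, $\varphi\in C^\infty_c(0,1)$ and $h\in L^\infty(\X)$ with bounded support, integration of the identity in (c) against $h(x)$ and against $\varphi(s)$, combined with an integration by parts in $s$ and Fubini (legitimate since the pushforward of $\mathcal L^1\otimes\mm$ under $(s,x)\mapsto F_s(x)$ is dominated by $C\mm$), produces
\begin{equation*}
\int_\X h(x)\Bigl[\int_0^1\varphi'(s)\,f(F_s(x))\,\d s+\int_0^1\varphi(s)\,\d f(X_s)(F_s(x))\,\d s\Bigr]\,\d\mm=0.
\end{equation*}
Arbitrariness of $h$ makes the inner bracket vanish $\mm$-a.e., and a countable dense choice of $\varphi$ promotes this to the $W^{1,1}(0,1)$-statement required by (iii) of Definition~\ref{def:RLF}. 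In the autonomous case $X_t\equiv X$, the semigroup property of the RLF (from uniqueness for autonomous vector fields) yields $F_{t+h}=F_h\circ F_t$ $\mm$-a.e., so applying \eqref{eq:contl2loc} to $g:=\d f(X)\in L^2_\loc(\X)$ gives $g\circ F_h\to g$ in $L^2_\loc$ as $h\to 0$ and hence, by composition with $F_t$ controlled by \eqref{eq:bdflow}, continuity of $t\mapsto \d f(X)\circ F_t$ in $L^2_\loc$, which upgrades Lipschitz to $C^1$ and makes \eqref{eq:derl2} hold for every $t$. The main obstacle I anticipate is the (c)$\Rightarrow$(a) step: the pointwise-in-$x$ $W^{1,1}$-conclusion has to be teased out of an $L^2_\loc$ Lipschitz identity even though, for fixed $x$, the map $s\mapsto f(F_s(x))$ is only an $\mathcal L^1$-equivalence class, and the duality device above is what circumvents this.
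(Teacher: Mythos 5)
Your chain (a)$\Rightarrow$(b)$\Rightarrow$(c)$\Rightarrow$(a) matches the paper's, and the three main implications are sound: (a)$\Rightarrow$(b) is the paper's Bochner-integral argument (note only that the identity $f\circ F_t-f\circ F_s=\int_s^t\d f(X_r)\circ F_r\,\d r$ comes out of \eqref{eq:deffl} and Fubini first for a.e.\ pair $(s,t)$, and is upgraded to \emph{every} pair as an $L^2$-identity via the continuity of $t\mapsto f\circ F_t$ supplied by \eqref{eq:contl2loc} --- a step you gloss over but which is needed for the Lipschitz bound to hold for all times); (b)$\Rightarrow$(c) is the same finite-speed-of-propagation cut-off; and your duality route for (c)$\Rightarrow$(a), testing against $\varphi\in C^\infty_c(0,1)$ and $h\in L^\infty$, is a legitimate self-contained alternative to the paper's direct passage through Fubini and Lemma~2.1 of \cite{AmbrosioGigliSavare11-3} --- both devices serve exactly to extract the pointwise-in-$x$ $W^{1,1}$ statement from an $L^2_\loc$-valued Lipschitz one.

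The one genuine gap is in the autonomous addendum. You invoke the semigroup property $F_{t+h}=F_h\circ F_t$, justified ``from uniqueness for autonomous vector fields''. But uniqueness of the Regular Lagrangian Flow is the content of Theorem \ref{thm:AT} and requires $X\in W^{1,2}_{C,\loc}(T\X)$ together with the divergence bounds \eqref{eq:iprlf}; Proposition \ref{prop:rlfeq} only assumes $X\in L^2_\loc(T\X)$ with $|X|\in L^\infty$, so neither uniqueness nor the semigroup identity \eqref{eq:semigrpr} is available here, and $F$ is moreover an arbitrary Borel map satisfying (i),(ii), not the canonical flow. The repair is to drop the semigroup step entirely: continuity of $t\mapsto g\circ F_t\in L^2(\X)$ at an arbitrary $t_0$, for $g\in L^2(\X)$, follows by running the triangle-inequality argument behind \eqref{eq:contl2loc} directly between $F_{t_0+h}$ and $F_{t_0}$, using that for $\mm$-a.e.\ $x$ the curve $s\mapsto F_s(x)$ is Lipschitz with speed at most $\||X|\|_{L^\infty}$ by \eqref{eq:metrsp} and \eqref{eq:linftyspeed}, together with the compression bound \eqref{eq:bdflow}. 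Applying this with $g=\d f(X)$ (resp.\ $\nchi_B\,\d f(X)$) gives continuity of the right-hand side of \eqref{eq:derl2} and hence the $C^1$ regularity, which is what the paper's proof does.
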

\begin{proof}\\
${\mathbf {(a)\Rightarrow (b)}}$  From \eqref{eq:deffl} and Fubini's theorem we have that for $\mm$-a.e.\ $x$ and  $(\mathcal L^1\restr{[0,1]})^2$-a.e.\ $(s_0,s_1)$ it holds
\begin{equation}
\label{eq:punt}
f(\Fl^{(X_t)}_{s_1}(x))-f(\Fl^{(X_t)}_{s_0}(x))=\int_{s_0}^{s_1}\d f(X_s)(\Fl^{(X_t)}_{s}(x)\,\d s\,.
\end{equation}
By the uniform bound \eqref{eq:linftyspeed}  and \eqref{eq:bdflow} we deduce that $(\d f(X_\cdot)\circ\Fl^{(X_t)}_\cdot)\in L^\infty([0,1],L^2(\Sp))$, and thus the Bochner integral $\int_{s_0}^{s_1}\d f(X_s)\circ\Fl^{(X_t)}_{s}\,\d s$ is a well defined function in $L^2(\Sp)$ which vary continuously in $s_0,s_1$. By  \eqref{eq:contl2loc} we also deduce that $s\mapsto f\circ \Fl^{(X_t)}_s\in L^2(\Sp)$ is continuous, thus from \eqref{eq:punt} we obtain that
\[
f\circ \Fl^{(X_t)}_{s_1}-f\circ \Fl^{(X_t)}_{s_0}=\int_{s_0}^{s_1}\d f(X_s)\circ\Fl^{(X_t)}_{s}\,\d s,\qquad\forall s_0,s_1\in[0,1],\ s_0<s_1,
\]
where the identity is intended in $L^2(\Sp)$ and the integral in the right hand side is the Bochner one. The Lipschitz continuity of $s\mapsto f\circ\Fl^{(X_t)}_s\in L^2(\Sp)$ and \eqref{eq:derl2} follow.\\
${\mathbf {(b)\Rightarrow (c)}}$ The assumption \eqref{eq:linftyspeed} together with \eqref{eq:metrsp} grant finite speed of propagation. Then the claim follows by a simple cut-off argument.\\
${\mathbf {(c)\Rightarrow (a)}}$ By assumption for every bounded set $B\subset \X$ we have
\[
\nchi_B\Big(f\circ \Fl^{(X_t)}_{s_1}-f\circ \Fl^{(X_t)}_{s_0}\Big)=\int_{s_0}^{s_1}\nchi_B\d f(X_s)\circ\Fl^{(X_t)}_{s}\,\d s,\qquad\forall s_0,s_1\in[0,1],\ s_0<s_1
\]
and thus from the arbitrariness of $B$ and Fubini's theorem we conclude that for $\mm$-a.e.\ $x$ it holds
\[
f( \Fl^{(X_t)}_{s_1}(x))-f( \Fl^{(X_t)}_{s_0}(x))=\int_{s_0}^{s_1}\d f(X_s)(\Fl^{(X_t)}_{s}(x))\,\d s,\qquad\mathcal L^2-a.e.\ s_0,s_1\in[0,1],\ s_0<s_1.
\]
Applying Lemma 2.1 of \cite{AmbrosioGigliSavare11-3} we deduce that for $\mm$-a.e.\ $x$ the function $t\mapsto f( \Fl^{(X_t)}_{s_1}(x))$ belongs to $W^{1,1}(0,1)$ and it is now obvious that its distributional derivative is given by $\d f(X_s)(\Fl^{(X_t)}_{s}(x))$, thus concluding the proof.\\
${\mathbf {C^1\ regularity}}$ It is sufficient to prove that $s\mapsto \d f(X)\circ\Fl^{(X)}_s\in L^2(\Sp)$ (resp. $L^2_\loc$) is continuous for $f\in W^{1,2}(\X)$ (resp.\ $W^{1,2}_\loc(\X)$). This is a direct consequence of \eqref{eq:contl2loc} applied to the functions $f_s=\d f(X)$ (resp.\ $\nchi_B\d f(X)$ for $B\subset \X$ Borel and bounded).
\end{proof}

With this said, we shall now recall the  main result of \cite{Ambrosio-Trevisan14} in the form we will use it:
\begin{theorem}\label{thm:AT}
Let $(X_t)\in L^2([0,1],W^{1,2}_{C,\loc}(T\X))\cap  L^\infty([0,1],L^\infty(T\X)) $ be such that $X_t\in D(\div_\loc)$ for a.e.\ $t\in[0,1]$, with 
\begin{equation}
\label{eq:iprlf}
\int_0^1\||\nabla X|\|_{L^2(\Sp)}+\|\div(X_t)\|_{L^2(\X)}+\|\big(\div(X_t)\big)^-\|_{L^\infty(\X)}\,\d t<\infty.
\end{equation}
Then a Regular Lagrangian Flow $F_s^{(X_t)}$ for $(X_t)$ exists and is unique, in the sense that if $\tilde F^{(X_t)}$ is another flow, then for $\mm$-a.e.\ $x\in \X$ it holds $F_s(x)=\tilde F_s(x)$ for every $s\in[0,1]$. Moreover it holds the quantitative bound
\begin{equation}
\label{eq:quantbound}
(F_s^{(X_t)})_*\mm\leq \exp\Big(\int_0^s\|\big(\div(X_t)\big)^-\|_{L^\infty(\X)}\,\d t\Big)\,\mm\qquad\forall s\in[0,1].
\end{equation}
\end{theorem}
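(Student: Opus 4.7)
The plan is to follow the Di~Perna--Lions strategy as adapted by Ambrosio to the metric-measure setting: replace the Lagrangian problem by the dual Eulerian one for the continuity equation, establish existence, uniqueness and an a priori density bound there, and then lift the conclusions back. Concretely, the equivalence I would rely on is that $F$ is an RLF for $(X_t)$ iff for every nonnegative $\rho_0 \in L^\infty(\X)$ the curve $t \mapsto (F_t)_\ast(\rho_0 \mm)$ is of the form $\rho_t \mm$ with $\rho_t$ uniformly bounded in $L^\infty$, and $t \mapsto \rho_t$ solves the continuity equation $\partial_t \rho_t + \div(\rho_t X_t) = 0$ weakly. Conversely, a bounded nonnegative solution of the continuity equation can be lifted back to an RLF via Ambrosio's superposition principle, which on $\RCD$ spaces is available since Wasserstein geodesics are represented by test plans. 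Thus the full statement reduces to existence, uniqueness and the density bound at the PDE level.

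For existence I would proceed by regularization. Apply the heat semigroup $\heatl_\eps$ to $(X_t)$ (acting on the $L^2$ tangent module and commuting with $\div$ up to controlled remainders) to produce smoother fields $X^\eps_t$ which still satisfy \eqref{eq:iprlf} with constants uniform in $\eps$. For each $\eps$ the added regularity suffices to construct an approximate flow $F^\eps$ together with the bound \eqref{eq:quantbound} with a constant independent of $\eps$. The uniform density bound combined with \eqref{eq:linftyspeed} provides tightness of the laws of $s \mapsto F^\eps_s(x)$, and one extracts a limit plan $\boldsymbol{\eta}$ on $C([0,1],\X)$ with $(\e_t)_\ast \boldsymbol{\eta} \leq C\mm$. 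The marginals are immediately seen to solve the continuity equation, and disintegrating $\boldsymbol{\eta}$ with respect to the evaluation at time $0$ produces the required $F$.

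The hard part will be \emph{uniqueness}, via the Di~Perna--Lions commutator estimate. Given two bounded nonnegative solutions $\rho^1,\rho^2$ with the same initial datum, I would regularize by $\rho^i_\eps := \heatl_\eps \rho^i$ and estimate the commutator
\[ r^i_\eps := \div(X_t \rho^i_\eps) - \heatl_\eps\bigl(\div(X_t \rho^i)\bigr). \]
The point where the Sobolev assumption $X_t \in W^{1,2}_{C,\loc}(T\X)$ becomes essential is in showing $r^i_\eps \to 0$ in $L^1_\loc$ as $\eps \to 0$: this requires exploiting the Bakry--\'Emery-type gradient bounds for $\heatl_\eps$ on $\RCD$ spaces to control commutators between $\heatl_\eps$, multiplication by $X_t$, and $\div$, in terms of $|\nabla X_t|_\HS$ and $|X_t|$ (which is precisely what \eqref{eq:iprlf} makes $L^1$-in-time). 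Once $r^i_\eps$ is controlled, testing the regularized equation for $\rho^1_\eps - \rho^2_\eps$ against a smooth convex renormalization of itself and sending $\eps \to 0$ gives a Gronwall inequality forcing $\rho^1 = \rho^2$. By the correspondence of the first paragraph this yields $\mm$-a.e.\ uniqueness of the RLF in the stated sense.

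Finally, the quantitative bound \eqref{eq:quantbound} I would derive by comparison. The spatially constant function $\varphi_s := \exp\bigl(\int_0^s \|(\div X_t)^-\|_{L^\infty} \,\d t\bigr)$ satisfies $\partial_s \varphi_s + \varphi_s \div X_s \geq 0$ weakly, while $\rho_s := \d(F_s)_\ast\mm / \d\mm$ solves the continuity equation with $\rho_0 = 1$. Using the same regularization-plus-renormalization scheme to test against $(\rho_s - \varphi_s)^+$ shows that this positive part starts at zero and cannot grow, so $\rho_s \leq \varphi_s$ $\mm$-a.e.\ for every $s$, which is exactly \eqref{eq:quantbound}.
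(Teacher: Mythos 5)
The paper does not prove this statement at all: Theorem \ref{thm:AT} is quoted as the main result of Ambrosio--Trevisan \cite{Ambrosio-Trevisan14} (see also \cite{AT15}), so there is no internal proof to compare yours against. Judged against the actual argument in that reference, your overall architecture is the right one --- reduce to well-posedness of the continuity equation, prove uniqueness there by a commutator estimate with the heat semigroup as mollifier, and pass between the Eulerian and Lagrangian pictures via the superposition principle --- and your uniqueness and comparison paragraphs are faithful in spirit to what is actually done.

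The genuine gap is in your existence step. You propose to mollify the vector field, ``construct an approximate flow $F^\eps$'' for the regularized field, and pass to the limit. On a general $\RCD$ space there is no ODE theory to invoke at the $\eps$ level: even for a vector field regularized by the heat semigroup there is no classical Cauchy--Lipschitz construction of trajectories, so the approximate flows you need simply are not available, and the scheme is circular. In \cite{Ambrosio-Trevisan14} existence is obtained entirely on the Eulerian side --- one solves a viscous approximation $\partial_t\rho+\div(\rho X_t)=\eps\Delta\rho$ (or uses a dual/semigroup argument), obtains a nonnegative solution of the continuity equation with the $L^\infty$ bound carrying the factor $\exp\bigl(\int_0^s\|(\div X_t)^-\|_{L^\infty}\,\d t\bigr)$, and only then lifts it to a measure on curves by superposition. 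A second, related gap: disintegrating the limit plan $\boldsymbol\eta$ over the time-$0$ evaluation gives, for $\mm$-a.e.\ $x$, a \emph{probability measure} on curves starting at $x$, not a single curve; to conclude that this measure is a Dirac mass (i.e.\ that $\boldsymbol\eta$ is induced by a map $F$) one must again invoke the PDE-level uniqueness through a no-splitting argument. Without these two repairs the existence and the map-valued nature of $F$ are not established.
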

Notice that the uniqueness part of the statement grants in particular that for $X_t$ independent on $t$, say $X_t= X$, the Regular Lagrangian Flow (that in this situation is well defined for any $t\geq 0$) satisfies the semigroup property
\begin{equation}
\label{eq:semigrpr}
\Fl^{(X)}_t\circ\Fl^{(X)}_s=\Fl^{(X)}_{t+s}\quad\mm-a.e.\qquad\forall t,s\geq 0.
\end{equation}
The uniqueness part of Theorem \ref{thm:AT} is tightly linked to the uniqueness of solutions of the continuity equation (\cite{Ambrosio-Trevisan14},\cite{GigliHan13}):
\begin{definition}[Solutions of the continuity equation]\label{def:solce}
Let $t\mapsto\mu_t\in \prob\Sp$ and $t\mapsto X_t\in L^0(T\Sp)$, $t\in[0,1]$,  be Borel maps. We say that they solve the continuity equation
\begin{equation}
\label{eq:ce}
\frac\d{\d t}\mu_t+\div(X_t\mu_t)=0
\end{equation}
provided:
\begin{itemize}
\item[i)] $\mu_t\leq C\mm$ for every $t\in[0,1]$ and some $C>0$,
\item[ii)] we have
\[
\int_0^1\int |X_t|^2\,\d\mu_t\,\d t<\infty,
\]
\item[iii)] for any $f\in W^{1,2}(\Sp)$ the map $t\mapsto\int f\,\d\mu_t$ is absolutely continuous and it holds
\[
\frac\d{\d t}\int f\,\d\mu_t=\int \d f(X_t)\,\d\mu_t\qquad a.e.\ t.
\]
\end{itemize}
\end{definition}
The following uniqueness result and representation formula has been proved in \cite{Ambrosio-Trevisan14}.
\begin{theorem}[Uniqueness of solutions of the continuity equation]\label{thm:unice}
Let $(X_t)$ be as in Theorem \ref{thm:AT} and $\bar\mu\in\prob\Sp$ be such that $\mu_0\leq C\mm$ for some $C>0$.

Then there exists a unique  $(\mu_t)$ such that $(\mu_t,X_t)$ solves  the continuity equation \eqref{eq:ce} in the sense of Definition \ref{def:solce} and for which $\mu_0=\bar\mu$. Moreover, such $(\mu_t)$ is given by
\[
\mu_s=(\Fl^{(X_t)}_s)_*\bar\mu\qquad\forall s\in[0,1].
\]
\end{theorem}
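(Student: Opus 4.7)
The plan splits naturally into an existence/representation part and a uniqueness part.

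For existence, I would directly verify that $\mu_s := (\Fl_s^{(X_t)})_*\bar\mu$ satisfies Definition \ref{def:solce}. Item (i) follows from the quantitative bound \eqref{eq:quantbound} combined with $\bar\mu\leq C\mm$; item (ii) from the $L^\infty$ control \eqref{eq:linftyspeed} together with $\mu_t\in\prob{\X}$. For item (iii), fix $f\in W^{1,2}(\X)$: part (b) of Proposition \ref{prop:rlfeq} tells us that $t\mapsto f\circ F_t^{(X_t)}\in L^2(\X)$ is Lipschitz with derivative $\d f(X_t)\circ F_t$ for a.e.\ $t$. Since $\bar\mu$ has bounded $\mm$-density, the map $t\mapsto\int f\,\d\mu_t=\int f\circ F_t^{(X_t)}\,\d\bar\mu$ inherits Lipschitz regularity, and differentiation under the integral sign yields $\int \d f(X_t)\,\d\mu_t$ as required.

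For uniqueness, my strategy is the superposition principle in the $\RCD$ framework. Given any admissible $(\nu_t)$ with $\nu_0=\bar\mu$, I would lift it to a probability $\ppi\in\prob{C([0,1],\X)}$ with $(\e_t)_*\ppi=\nu_t$ for every $t$ and such that $\ppi$-a.e.\ $\gamma$ is a weak integral curve of $(X_t)$: namely, for every $f\in W^{1,2}(\X)$, $t\mapsto f(\gamma_t)$ is absolutely continuous with derivative $\d f(X_t)(\gamma_t)$. The construction, following Ambrosio--Trevisan, is obtained by regularizing $\nu_t$ via a heat-flow convolution, solving the resulting smooth continuity equation along characteristics, and passing to the limit using tightness furnished by the bounds $\nu_t\leq C'\mm$ and \eqref{eq:linftyspeed}.

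The main obstacle is then to identify $\ppi$. Disintegrating $\ppi=\int\ppi_x\,\d\bar\mu(x)$ along $\e_0$, one needs to show $\ppi_x=\delta_{F_\cdot^{(X_t)}(x)}$ for $\bar\mu$-a.e.\ $x$. This is extracted from the uniqueness clause of Theorem \ref{thm:AT}: if $\ppi_x$ were not a Dirac on a set of positive $\bar\mu$-measure, a measurable selection argument would produce two distinct Regular Lagrangian Flows for $(X_t)$, contradicting the uniqueness. Consequently $\nu_t=(\e_t)_*\ppi=(F_t^{(X_t)})_*\bar\mu=\mu_t$, which completes the proof.
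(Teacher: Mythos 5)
First, a remark on the comparison itself: the paper does not prove this statement — Theorem \ref{thm:unice} is quoted from \cite{Ambrosio-Trevisan14} with no internal argument — so your proposal can only be measured against the known proof in that reference and on its own terms.

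The existence/representation half of your argument is correct and is exactly the natural verification: \eqref{eq:quantbound} gives the bounded density, \eqref{eq:linftyspeed} gives the integrability in item (ii) of Definition \ref{def:solce}, and Proposition \ref{prop:rlfeq}(b) paired against the $L^1\cap L^\infty$ (hence $L^2$) density of $\bar\mu$ gives item (iii).

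The uniqueness half has a genuine gap at the identification step. Granting the superposition principle, you obtain a lifting $\ppi$ of $(\nu_t)$ with disintegration $\ppi=\int\ppi_x\,\d\bar\mu(x)$, and you claim that if $\ppi_x$ fails to be a Dirac on a set of positive measure, a measurable selection yields a second Regular Lagrangian Flow contradicting Theorem \ref{thm:AT}. But a selection of one trajectory from each (possibly diffuse) $\ppi_x$ does not produce a Regular Lagrangian Flow in the sense of Definition \ref{def:RLF}: it is defined only $\bar\mu$-a.e.\ rather than $\mm$-a.e., and, more seriously, there is no reason for the selected map $G$ to satisfy the bounded compression bound \eqref{eq:bdflow} — the estimate $(\e_s)_*\ppi=\nu_s\leq C'\mm$ controls the $\ppi_x$-average over trajectories, not the push-forward of $\bar\mu$ under a single selected branch, which can concentrate arbitrarily. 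So the contradiction with the uniqueness clause of Theorem \ref{thm:AT} is never reached; indeed, ruling out branching of the trajectories of $\ppi$ is essentially equivalent to the uniqueness one is trying to prove. The actual argument in \cite{Ambrosio-Trevisan14} runs in the opposite logical direction: uniqueness of bounded-density solutions of the continuity equation is established first at the Eulerian level (via commutator/duality estimates exploiting the $W^{1,2}_C$ bound in \eqref{eq:iprlf} and the regularizing properties of the heat semigroup), and the superposition principle is then used to deduce existence and uniqueness of the Regular Lagrangian Flow, i.e.\ Theorem \ref{thm:AT}, as a consequence. To repair your proof you would either have to reproduce that Eulerian argument, or replace the selection step by a genuine ``no-splitting'' lemma for regular generalized flows, which is not contained in the literal statement of Theorem \ref{thm:AT}.
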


\section{Calculus Tools}

\subsection{Local bounded compression/deformation}
Here we shall extend the constructions of pullback module and pullback of 1-forms provided in \cite{Gigli14} (see also \cite{Gigli17}) to maps which are locally of bounded compression/deformation. For this, it is technically convenient  to work with $L^0$-normed modules rather than with $L^2$-normed ones.

\subsubsection{Pullback module through a map of local bounded compression}

\begin{definition}[Maps of local bounded compression]
Let $(\Sp_1, \mea_1)$ and $(\Sp_2, \mea_2)$ be two $\sigma$-finite measured spaces. We say that  $\varphi \colon \Sp_1 \rightarrow \Sp_2$ is a map of   local bounded compression provided  for every $B \subset \Bor(\Sp_1)$ with $\mea_1(B) < +\infty$ there exists a constant $C_B \ge 0$ such that
\begin{equation}
\varphi_{\ast} \big( \mea_1\restr{B}  \big) \le C_B \mea_2.
\end{equation}
\end{definition}

\begin{thmdef}\label{pbmod}
Let $(\Sp_1, \mea_1)$ and $(\Sp_2, \mea_2)$ be two $\sigma$-finite measured spaces, $\varphi \colon \Sp_1 \rightarrow \Sp_2$ be of   local bounded compression and $\mathscr{M}$  a $\Lint^0(\Sp_2)$-normed module. Then there exists a unique couple $\left( [\varphi^\ast] \mathscr{M}, \left[ \varphi^\ast \right] \right)$ where $[\varphi^\ast] \mathscr{M}$ is a $\Lint^0(\Sp_1)$-module and $\left[ \varphi^\ast \right] \colon \mathscr{M} \rightarrow [\varphi^\ast] \mathscr{M}$ is a linear map such that:
\begin{enumerate}
\item[1)] $\abs*{\left[ \varphi^\ast \right] v} = \abs*{v} \circ \varphi$ $\mea_1$-a.e.\ for every $v\in\mathscr M$,
\item[2)] $[\varphi^\ast] \mathscr{M}$ is generated by  $\left\{ \left[ \varphi^\ast \right] (v) : v \in \mathscr{M}  \right\}$.
\end{enumerate} 
Uniqueness is intended up to unique isomorphism, i.e.:  if  $\big( \widetilde{[\varphi^\ast] \mathscr{M}}, \big[ \tilde{\varphi}^\ast \big] \big)$ is another such couple,   then there exists a unique isomorphism $\Phi \colon [\varphi^\ast ]\mathscr{M} \rightarrow \widetilde{[\varphi^\ast ]\mathscr{M}}$ such that $\left[ \tilde{\varphi}^\ast \right] = \left[ \varphi^\ast \right] \circ \Phi$.
\end{thmdef}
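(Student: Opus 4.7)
The plan is to mimic the pullback-module construction from \cite{Gigli14}, \cite{Gigli17} (originally carried out for $L^2$-normed modules and maps of bounded compression), adapting it to the $L^0$ setting with local bounded compression. The $L^0$ setting is in fact technically simpler than the $L^2$ one, because the only role of the compression bound here is to guarantee that pre-composition with $\varphi$ sends $\mea_2$-equivalence classes to $\mea_1$-equivalence classes rather than also maintaining some integrability.

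The preliminary step I would carry out is: local bounded compression plus $\sigma$-finiteness of $\mea_1$ implies that $\varphi^{-1}$ sends $\mea_2$-null sets to $\mea_1$-null sets. Indeed, writing $\Sp_1=\bigcup_n B_n$ with $\mea_1(B_n)<\infty$ and using $\varphi_\ast(\mea_1\restr{B_n})\le C_{B_n}\mea_2$, any $N\subset\Sp_2$ with $\mea_2(N)=0$ satisfies $\mea_1(B_n\cap\varphi^{-1}(N))\le C_{B_n}\mea_2(N)=0$. Consequently $f\mapsto f\circ\varphi$ is a well-defined continuous algebra homomorphism from $L^0(\Sp_2)$ to $L^0(\Sp_1)$, which allows us to make sense of $\abs{v}\circ\varphi\in L^0(\Sp_1)$ for every $v\in\mathscr{M}$.

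For existence I would follow the usual pre-module route. Consider formal simple sums $\sum_{i=1}^n\chi_{A_i}\cdot v_i$ with $(A_i)_{i\le n}$ a finite Borel partition of $\Sp_1$ and $v_i\in\mathscr{M}$, modulo the equivalence that identifies two such expressions when they coincide on a common refinement. On this vector space one defines the pointwise norm
\[
\Big|\sum_i\chi_{A_i}\cdot v_i\Big|:=\sum_i\chi_{A_i}(\abs{v_i}\circ\varphi)\in L^0(\Sp_1)
\]
(well-defined, again by a common-refinement check) and the obvious multiplication by simple $L^\infty(\Sp_1)$ functions. Quotienting further by the subspace of elements with $\mea_1$-a.e.\ null pointwise norm and completing with respect to the distance $\mathsf{d}_0$ of \eqref{eq:m0dist} produces a space $[\varphi^\ast]\mathscr{M}$; extending the simple $L^\infty$-multiplication by $\mathsf{d}_0$-continuity yields an $L^\infty(\Sp_1)$-multiplication, and approximating $L^0$ multipliers by their truncations upgrades this to an $L^0(\Sp_1)$-multiplication. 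One checks all axioms of Definition \ref{def:l0mod}. Setting $[\varphi^\ast](v):=[\chi_{\Sp_1}\cdot v]$ then gives property (1) by construction and property (2) by density of simple sums by design.

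For uniqueness, given a second candidate $\big(\widetilde{[\varphi^\ast]\mathscr{M}},[\tilde\varphi^\ast]\big)$ I would define a candidate isomorphism $\Phi$ on the $L^\infty$-linear span of $\{[\varphi^\ast]v:v\in\mathscr{M}\}$ inside $[\varphi^\ast]\mathscr{M}$ by
\[
\Phi\Big(\sum_i\chi_{A_i}\cdot[\varphi^\ast]v_i\Big):=\sum_i\chi_{A_i}\cdot[\tilde\varphi^\ast]v_i.
\]
Property (1) on both modules ensures that $\Phi$ preserves the pointwise norm, which simultaneously shows $\Phi$ is well-defined (null representatives go to null representatives) and $\mathsf{d}_0$-isometric; the generation property (2) yields density of both domain and image, so $\Phi$ extends uniquely to a $\mathsf{d}_0$-isometric bijection. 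Linearity and $L^0$-linearity are inherited, again by first passing to $L^\infty$-multipliers and then approximating.

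The main technical obstacle is bookkeeping: checking that the pointwise norm and the $L^\infty$- (then $L^0$-) multiplication descend to the successive quotients and are $\mathsf{d}_0$-continuous, so that the extension-by-density procedure produces a genuine $L^0$-normed module rather than some weaker object. There is no conceptual difficulty beyond this, since local bounded compression is precisely the minimal hypothesis that makes pre-composition with $\varphi$ behave well on $L^0$.
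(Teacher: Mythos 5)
Your proposal is correct and follows essentially the same route as the paper: the same pre-pullback space of simple sums over finite Borel partitions, the same pointwise norm $\sum_i\nchi_{A_i}|v_i|\circ\varphi$, completion with respect to $\sfd_0$, and the same forced-definition-plus-density argument for uniqueness (your two-step quotient by common refinement and then by null pointwise norm is just a repackaging of the paper's equivalence relation on {\sf Ppb}). The only addition is your explicit preliminary remark that local bounded compression plus $\sigma$-finiteness makes $f\mapsto f\circ\varphi$ well defined on $L^0(\Sp_2)$, which the paper uses implicitly.
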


\begin{proof}\\ 
{\bf Existence} We define the 'pre-pullback' set {\sf Ppb} as
\begin{equation} \notag
\begin{split}
\text{Ppb} := \big\{ (v_i, A_i)_{i=1,\ldots,n} \,: \, n\in\N,\  (A_i)\subset \Bor(\Sp_1)  \text{ is a  partition of $\Sp_1$}\ \text{and}\,\,  v_i \in \mathscr{M} \,\, \forall i =1,\ldots,n \big\}
\end{split}
\end{equation}
and an equivalence relation on it by declaring $(v_i, A_i)  \sim (w_j, B_j)$ provided
\[  
\abs*{v_i - w_j} \circ \varphi = 0 \quad \mea_1\text{-a.e. on}\,\,  A_i \cap B_j \qquad \forall \, i,j.  
 \]
 It s readily verified  that it is actually an equivalence relation on {\sf Ppb}:  we shall denote by $[ (v_i, A_i) ]$ the equivalence class of $(v_i, A_i)$.
 
We endow {\sf Ppb}$/\sim$ with a vector space structure by putting
\[
\begin{split}
[(v_i, A_i)] + [(w_j, B_j) ] &:= [ (v_i + w_j, A_i \cap B_j) ]\\
 \lambda [ (v_i, A_i) ]& := [ (\lambda v_i, A_i) ]
\end{split}
\] 
for every $[(v_i, A_i)] , [(w_j, B_j) ]\in{\sf Ppb}/\sim $ and $\lambda\in\R$. Notice that these are well defined. Moreover, we define a pointwise norm on ${\sf Ppb}/\sim$ and a multiplication with simple functions by putting
\[
\begin{split}
| [ (v_i, A_i)  ]|&:=\sum_i\nchi_{A_i}|v_i|\circ\varphi\\
g [ (v_i, A_i)  ] &:=  [ (\alpha_j v_i, A_i \cap E_j) ]\qquad\text{for}\qquad g=\sum_j\alpha_j\nchi_{E_j}.
\end{split}
\]
Again, these are easily seen to be well defined; then  we fix a partition $(E_i)\subset\Bor(\Sp_1)$ of $\Sp_1$ made of sets of finite $\mea_1$-measure and define the distance $\dint_0$ on ${\sf Ppb}/\sim$ as
\[
 \dint_0 ([ (v_i, A_i)], [ (w_j, B_j) ]) := \displaystyle \sum_{ k \in \N} \dfrac{1}{2^k \mea_1(E_k)} \int_{E_k }\big| [(v_i, A_i)] - [(w_j, B_j) ] \big|\,\dint \mea_1. 
 \]
We then define the space $[\varphi^\ast] \mathscr{M}$ as the completion of $(\text{Ppb}/\sim, \dist_0)$, equipped with the induced topology and the pullback map $[\varphi^*]:\mathscr M\to [\varphi^\ast] \mathscr{M}$ as $[\varphi^*]v:=[v,\Sp_1]$. The (in)equalities
\[
\begin{split}
\abs*{[ (v_i + w_j, A_i \cap B_j)]} &\le \abs*{[ (v_i, A_i) ]} + \abs*{[ (w_j, B_j)]}, \\
\abs*{\lambda [ (v_i, A_i) ]} &= \abs*{\lambda} \abs*{[ (v_i, A_i) ]},\\
\abs*{g [ (v_i, A_i) ]} &= \abs*{g} \abs*{[ (v_i, A_i) ]},\\
\end{split}
\]
valid $\mea_1$-a.e.\ for every $[ (v_i, A_i) ], [ (w_j, B_j)] \in \text{Ppb}/\sim$, $\lambda \in \real$ and simple function $g$ grant that the vector space structure, the pointwise norm and the multiplication by simple functions can all be extended by continuity to the whole $[\varphi^\ast] \mathscr{M}$ and it is then clear with these operations such space is a $L^0$-normed module.

Property $(1)$ then follows by the very definitions of pullback map and pointwise norm, while property $(2)$ from the fact that ${\sf  Ppb}/\sim$ is dense in $[\varphi^\ast] \mathscr{M}$ and the typical element $[v_i,A_i]$ of ${\sf Ppb}/\sim$ is equal to $\sum_i\nchi_{A_i}[\varphi^*]v_i$.

\noindent {\bf Uniqueness} The requirement for $\Phi \colon [\varphi^\ast] \mathscr{M} \rightarrow \widetilde{[\varphi^\ast ]\mathscr{M}}$ to be $\Lint^0(\Sp_1)$-linear and such that $\left[ \tilde{\varphi}^\ast \right] = \left[ \varphi^\ast \right] \circ \Phi$ force the definition
\begin{equation} \label{DefPhi} 
\Phi(V) := \sum_i\nchi_{A_i}[ \tilde{\varphi}^*]v_i \qquad\text{for}\qquad V=\sum_i\nchi_{A_i}[\varphi^*]v_i.
\end{equation}
The identity
\[
|\Phi(V)|=\sum_i\nchi_{A_i}|[ \tilde{\varphi}^*]v_i |\stackrel{\text{(1) for $\widetilde{[\varphi^\ast] \mathscr{M}}$}}=\sum_i\nchi_{A_i}|v_i|\circ\varphi\stackrel{\text{(1) for ${[\varphi^\ast ]\mathscr{M}}$}}=\sum_i\nchi_{A_i}|[ {\varphi}^*]v_i |=|V|
\]
shows in particular that the definition of $\Phi(V)$ is well-posed, i.e.\ it depends only on $V$ and not on the particular way to represent it as sum. It also shows that it preserves the pointwise norm and thus it is continuous. Since the space of $V$'s of the form $\sum_i\nchi_{A_i}[\varphi^*]v_i$ is dense in $[\varphi^\ast] \mathscr{M} $ (property $(2)$ for $[\varphi^\ast] \mathscr{M} $), such $\Phi$ can be uniquely extended to a continuous map on the whole $[\varphi^\ast ]\mathscr{M} $ and such extension is clearly linear, continuous, and preserves the pointwise norm. By definition, it also holds $\Phi(gV)=g\Phi(V)$ for $g$ simple and $V=\sum_i\nchi_{A_i}[\varphi^*]v_i$ and thus by approximation we see that the same holds for general $g\in L^0(\Sp_1)$ and $V\in[ \varphi^\ast ]\mathscr{M}$. 

It remains to show that the image of $\Phi$ is the whole $\widetilde{[\varphi^\ast] \mathscr{M}}$: this follows from the fact that elements of the form $\sum_i\nchi_{A_i}[ \tilde{\varphi}^*]v_i$, which  by definition are in the image of $\Phi$, are dense in $\widetilde{[\varphi^\ast ]\mathscr{M}}$ by property $(2)$ for $\widetilde{[\varphi^\ast] \mathscr{M}}$.
\end{proof}
We shall now provide an explicit representation of such pullback module in the  case when $\varphi$ is a projection.

Thus  let  $(\Sp_1, \mea_1)$ and $(\Sp_2, \mea_2)$ be two $\sigma$-finite measured spaces and let $\mathscr{M}$ be a $\Lint^0(\Sp_1)$-module over $\Sp_1$.

We shall denote by  $\Lint^0(\Sp_2, \mathscr{M})$ the space of (equivalence classes up to $\mm_2$-a.e.\ equality of)  strongly measurable (=Borel and essentially separably valued)  functions from $\Sp_2$ to $\mathscr{M}$ and claim that such space canonically carries the structure of $L^0(\Sp_1\times\Sp_2)$-normed module. The multiplication of an element in $\Lint^0(\Sp_2, \mathscr{M})$ by a function $f \in \Lint^0(\Sp_1\times\Sp_2)$ is defined as the map $\Sp_2 \ni x_2 \mapsto f(\cdot, x_2) \cdot v(\cdot , x_2) \in \mathscr{M}$. By approximating $f$ in $L^0(\Sp_1\times\Sp_2)$ with functions having finite range it is easy to check that $x_2 \mapsto f(\cdot, x_2) \cdot v(\cdot , x_2)$ has separable range if $x_2 \mapsto  v(\cdot , x_2)$ does, so that this definition is well posed. Similarly, the pointwise norm of  $v \in \Lint^0(\Sp_2, \mathscr{M})$ is obtained composing the map  $x_2 \mapsto v(\cdot, x_2) \in \mathscr{M}$ with the pointwise norm on $\M$, thus providing an element of $L^0(\Sp_2,L^0(\Sp_1))\sim L^0(\Sp_1\times\Sp_2)$. Finally, we use such pointwise norm to define the topology of $\Lint^0(\Sp_2, \mathscr{M})$ as in point $(iii)$ of Definition \ref{def:l0mod}. 

It is not hard to check  that sequences converging in this topology  are made of maps $v_n(\cdot,x_2)$ which are $\mm_2$-a.e.\ converging and that with these definitions $\Lint^0(\Sp_2, \mathscr{M})$ is indeed a $L^0(\Sp_1\times\Sp_2)$-normed module.

In what will come next, we shall often implicitly use the following identification:
\begin{proposition}[$ {\sqb{\pi_{1}}^{\ast}} \mathscr{M}$ is isomorphic to  $\Lint^0(\Sp_2, \mathscr{M})$]\label{isopull}
Let $(\Sp_1, \mea_1)$ and $(\Sp_2, \mea_2)$ be two $\sigma$-finite measured spaces, $\mathscr{M}$ a $\Lint^0(\Sp_1)$-module over $\Sp_1$ and $\pi_1:\Sp_1\times\Sp_2\to\Sp_1$ the canonical projection.

Then there exists a unique isomorphism from $[{\pi_{1}}^{\ast}] (\mathscr{M}) $ to $\Lint^0(\Sp_2, \mathscr{M})$ which for every $v\in\M$ sends   $[\pi_1^\ast]v$ to the function in $ \Lint^0(\Sp_2, \mathscr{M})$ constantly equal to $v$.
\end{proposition}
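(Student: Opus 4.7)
The plan is to invoke the uniqueness clause in Theorem/Definition \ref{pbmod} with the candidate pair $(L^0(\Sp_2,\mathscr M), \tilde\pi_1^*)$, where the map $\tilde\pi_1^* \colon \mathscr M \to L^0(\Sp_2,\mathscr M)$ is defined by $\tilde\pi_1^*(v) \coloneqq [x_2 \mapsto v]$, i.e.\ the function constantly equal to $v$ (which is trivially strongly measurable). Once we verify that this pair enjoys properties $(1)$ and $(2)$ listed in Theorem/Definition \ref{pbmod}, the existence and uniqueness of the sought isomorphism $\Phi$, together with the formula $\Phi([\pi_1^*]v) = \tilde\pi_1^*(v)$, follow at once.

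Property $(1)$ is immediate: the pointwise norm of the constant function $x_2 \mapsto v$ is, by the very definition of the pointwise norm on $L^0(\Sp_2,\mathscr M)$ given before the statement, the element of $L^0(\Sp_1\times\Sp_2)$ sending $(x_1,x_2)$ to $|v|(x_1)$, i.e.\ $|v|\circ\pi_1$. The substantive step is property $(2)$, namely that the $L^0(\Sp_1\times\Sp_2)$-linear combinations of constants $\tilde\pi_1^*(v)$ form a dense subspace of $L^0(\Sp_2,\mathscr M)$. Given $w\in L^0(\Sp_2,\mathscr M)$, strong measurability together with essential separability of the range provide simple functions $w_n = \sum_{i=1}^{k_n} \nchi_{B_i^n}(x_2)\, v_i^n$, with $(B_i^n)_i$ a finite Borel partition of $\Sp_2$ and $v_i^n \in \mathscr M$, such that $w_n(x_2)\to w(x_2)$ in the topology of $\mathscr M$ for $\mea_2$-a.e.\ $x_2$. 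Note that $w_n = \sum_i \nchi_{\Sp_1\times B_i^n}\cdot \tilde\pi_1^*(v_i^n)$ is precisely an $L^0(\Sp_1\times\Sp_2)$-linear combination of the required form, so it only remains to show $w_n\to w$ in the $\dist_0$-topology of $L^0(\Sp_2,\mathscr M)$.

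This is the main technical obstacle and is handled as follows. Fix a partition $(E_k)$ of $\Sp_1\times\Sp_2$ into sets of finite $(\mea_1\otimes\mea_2)$-measure used to define $\dist_0$. For every fixed $x_2$ at which $w_n(x_2)\to w(x_2)$ in $\mathscr M$, by definition of the $L^0$-topology on $\mathscr M$ we have $|w_n(x_2)-w(x_2)|\to 0$ in measure with respect to $\mea_1$ on every set of finite $\mea_1$-measure. Applying Fubini to the section $E_k^{x_2}=\{x_1 \,:\,(x_1,x_2)\in E_k\}$ and using that $\mea_1(E_k^{x_2})$ is integrable in $x_2$, bounded convergence gives
\[
\int \nchi_{E_k}(x_1,x_2)\min\{1,|w_n-w|(x_1,x_2)\}\,\dint\mea_1(x_1) \;\xrightarrow[n\to\infty]{}\; 0 \qquad \text{for }\mea_2\text{-a.e. }x_2,
\]
and the outer integral over $x_2$ tends to zero by a second application of bounded convergence (the integrand being dominated by $\mea_1(E_k^{x_2})$). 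Summing in $k$ by dominated convergence yields $\dist_0(w_n,w)\to 0$, which establishes property $(2)$.

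Having verified both properties, the uniqueness statement of Theorem/Definition \ref{pbmod} produces a unique $L^0(\Sp_1\times\Sp_2)$-linear isomorphism $\Phi\colon [\pi_1^*]\mathscr M\to L^0(\Sp_2,\mathscr M)$ preserving the pointwise norm and satisfying $\Phi\circ[\pi_1^*]=\tilde\pi_1^*$, which is exactly the conclusion of the proposition.
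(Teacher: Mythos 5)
Your proposal is correct and follows exactly the paper's own route: verify properties $(1)$ and $(2)$ of Theorem/Definition \ref{pbmod} for the pair $\big(L^0(\Sp_2,\mathscr M),\,v\mapsto\hat v\big)$ and conclude by the uniqueness clause. The only difference is that you spell out in full the density argument (approximation by simple functions plus Fubini and dominated convergence for the $\sfd_0$-convergence), which the paper dismisses as ``standard means in vector-space integration''; your version of that step is sound.
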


\begin{proof} For $v\in\M$ let $\hat v\in L^0(\Sp_2,\M)$ be the function constantly equal to $v$. It is clear that   $\abs*{ \hat v} = \abs{v} \circ \pi_1$ $\mm_1\times\mm_2$-a.e.. Moreover, from the fact that functions in $ L^0(\Sp_2,\M)$ are essentially separably valued it follows by standard means in vector-space integration that $\{\hat v:v\in\M\}$ generate the whole $ L^0(\Sp_2,\M)$.

The conclusion comes from Theorem \ref{pbmod}.
\end{proof} \bigskip

\subsubsection{Localized pullback of 1-forms}

\begin{definition}[Maps of local bounded deformation]
Let $\mmso$ and $\mmst$ be two metric measure spaces. We say that a map $\varphi : \Sp_1 \rightarrow \Sp_2$ is  of local bounded deformation if for every bounded set $B\subset \Sp_1$ there are constants $L(B),C(B)>0$ such that:
\[
\begin{split}
\varphi&\ \mathrm{ is}\ L(B)\mathrm{-Lipschitz\ on }\ B\\
\varphi_{\ast}& \big( \mea_1\restr B \big) \le C(B) \mea_2.
\end{split}
\]
\end{definition}
Recalling that the local Lipschitz constant $\lip\,\varphi:\Sp_1\to[0,\infty]$ is defined as
\[
\lip\,\varphi(x):=\lims_{y\to x}\frac{\sfd_2(\varphi(x),\varphi(y))}{\sfd_1(x,y)}
\]
if $x$ is not isolated, 0 otherwise, we have  the following simple statement:
\begin{proposition}
Let $\mmso$ and $\mmst$ be two metric measure spaces and $\varphi : \Sp_1 \rightarrow \Sp_2$ be a map of local bounded deformation. Then  for any $f \in \Sclass_\loc(\Sp_2)$, we have $f \circ \varphi \in \Sclass_{\loc}(\Sp_1)$ and
\begin{equation}\label{diffbound}
|\dint (f \circ \varphi)| \le \lip\, \varphi \,|\dint f| \circ \varphi,\qquad\mea_1-a.e..
\end{equation}
\end{proposition}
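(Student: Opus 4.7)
The plan is to exhibit $G:=\lip\,\varphi\cdot|\dint f|\circ\varphi$ as a weak upper gradient of $f\circ\varphi$ belonging to $L^2_\loc(\Sp_1)$: this directly yields both $f\circ\varphi\in\Sclass_\loc(\Sp_1)$ and the bound \eqref{diffbound} for the minimal weak upper gradient. The local bounded compression $\varphi_\ast(\mm_1\restr B)\leq C(B)\mm_2$ ensures that $|\dint f|\circ\varphi$ is $\mm_1$-a.e.\ well defined and in $L^2_\loc(\Sp_1)$; combined with $\lip\,\varphi\leq L(B)$ on each bounded $B$, this gives $G\in L^2_\loc(\Sp_1)$.

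My main tool would be a pushforward-of-test-plans argument. First I would reduce to the case $f\in\Sclass(\Sp_2)$ via the approximation \eqref{eq:apprsloc} and the locality of the minimal weak upper gradient. Given a test plan $\ppi$ on $\Sp_1$, by decomposing along the pieces $\{\gamma:\gamma([0,1])\subset B\}$ as $B$ grows over a bounded exhaustion, it suffices to treat the case of $\ppi$ concentrated on curves lying in a fixed bounded set $B\subset\Sp_1$. On such curves $\varphi\circ\gamma$ is $L(B)$-Lipschitz, hence $\tilde\ppi:=(\gamma\mapsto\varphi\circ\gamma)_\ast\ppi$ is a well-defined Borel measure on $C([0,1],\Sp_2)$. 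I then verify that $\tilde\ppi$ is a test plan on $\Sp_2$: the compression bound follows from combining the one for $\ppi$ with the local bounded compression of $\varphi$; finite $L^2$-kinetic energy follows from $|(\varphi\circ\gamma)'_t|\leq L(B)|\dot\gamma_t|$. Applying the weak upper gradient property of $|\dint f|$ to $\tilde\ppi$ on $\Sp_2$ and unravelling the pushforward yields
\[
\int|f(\varphi(\gamma_1))-f(\varphi(\gamma_0))|\,\d\ppi\leq \iint_0^1 |\dint f|(\varphi(\gamma_t))\,|(\varphi\circ\gamma)'_t|\,\d t\,\d\ppi.
\]

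The main point where some care is needed is the sharp metric-speed chain rule $|(\varphi\circ\gamma)'_t|\leq \lip\,\varphi(\gamma_t)\,|\dot\gamma_t|$ for a.e.\ $t$: the crude version with $L(B)$ in place of $\lip\,\varphi(\gamma_t)$ is what makes $\tilde\ppi$ a test plan, but to obtain \eqref{diffbound} the pointwise bound with $\lip\,\varphi(\gamma_t)$ is required. This follows from the $\limsup$ definition of $\lip\,\varphi$ at $\gamma_t$ and the continuity of $\gamma$: for $\epsilon>0$ and $h$ small enough one has $\dist_2(\varphi(\gamma_{t+h}),\varphi(\gamma_t))\leq (\lip\,\varphi(\gamma_t)+\epsilon)\,\dist_1(\gamma_{t+h},\gamma_t)$; dividing by $|h|$ and letting $h\to 0$ then $\epsilon\to 0$ gives the bound. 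Substituting into the displayed inequality identifies $G$ as a weak upper gradient of $f\circ\varphi$ and concludes the proof.
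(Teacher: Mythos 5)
Your proposal is correct and follows essentially the same route as the paper: restrict the test plan to curves contained in a bounded set, push it forward through $\varphi$, check that the compression and kinetic-energy bounds make it a test plan on $\Sp_2$, and feed back the weak upper gradient inequality using the pointwise metric-speed bound $|(\varphi\circ\gamma)'_t|\leq \lip\varphi(\gamma_t)|\dot\gamma_t|$. The only cosmetic difference is that you use the integrated form of the weak upper gradient inequality while the paper invokes the $W^{1,1}$-along-curves characterization \eqref{eq:sobcurve}; both close the argument in the same way.
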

\begin{proof} Fix a point $\bar x\in\Sp_1$, let $A_n\subset C([0,1],\Sp_1)$ be defined as
\[
A_n:=\big\{\gamma\in C([0,1],\Sp_1) :\ \gamma_t\in B_n(\bar x)\ \forall t\in[0,1]\big\}
\]
and notice that $\cup_nA_n=C([0,1],\Sp_1)$. Now let $\ppi$ be a test plan on $\Sp_1$ and notice that for $n$ sufficiently large the measure $\ppi_n:=\ppi(A_n)^{-1}\ppi\restr{A_n}$ is well defined and a test plan. By construction we have 
\[
(\e_t)_*\varphi_*\ppi_n\leq \ppi(A_n)^{-1}C(B_n(\bar x))C(\ppi)\mm_2\qquad\forall t\in[0,1],
\]
where $C(\ppi)$ is such that $(\e_t)_*\ppi\leq C(\ppi)\mm_1$ for every $t\in[0,1]$, and taking into account the trivial bound
\begin{equation}
\label{eq:spv}
\{\text{metric speed of $t\mapsto \varphi(\gamma_t)$}\}\ \leq \lip \,\varphi(\gamma_t)|\dot\gamma_t|\qquad a.e.\ t
\end{equation}
we also have
\[
\iint_0^1|\dot\gamma_t|^2\,\d t\,\d\varphi_*\ppi_n(\gamma)\leq \ppi(A_n)^{-1} L^2(B_n(\bar x))\iint_0^1|\dot\gamma_t|^2\,\d t\,\d\ppi_n(\gamma).
\]
Hence  $\varphi_*\ppi_n$ is a test plan on $\Sp_2$ and thus for $\varphi_*\ppi_n$-a.e.\ $\tilde\gamma$ we have that $t\mapsto f(\tilde \gamma)$ is in $W^{1,1}(0,1)$ with
\[
|\frac\d{\d t}f(\tilde\gamma_t)|\leq |\dot{\tilde\gamma}_t||\d f|(\tilde\gamma_t).
\]
Recalling \eqref{eq:spv} this means that for  $\ppi_n$-a.e.\ $\gamma$ the map $t\mapsto f(\varphi(\gamma_t))$ belongs to $W^{1,1}(0,1)$ with
\begin{equation}
\label{eq:ppin}
|\frac\d{\d t}f(\varphi(\gamma_t))|\leq \lip\,\varphi(\gamma_t)\,|\dot{\gamma}_t|\,|\d f|(\varphi(\gamma_t)).
\end{equation}
Being this true for every $n\in\N$ sufficiently large, \eqref{eq:ppin} holds also for $\ppi$-a.e.\ $\gamma$ and since $\lip\, \varphi\, |\dint f| \circ \varphi\in L^2_\loc(\Sp_1)$, by the characterization \eqref{eq:sobcurve} of Sobolev functions the proof is completed. 
\end{proof}
\begin{thmdef}[Pullback of 1-forms] Let $\mmso$ and $\mmst$ be two metric measure spaces and $\varphi : \Sp_1 \rightarrow \Sp_2$ be a map of local bounded deformation.

Then there exists a unique linear and continuous map $\varphi^\ast \colon \Lint^0 (\mathit{T}^{\ast}\Sp_2) \rightarrow \Lint^0 (\mathit{T}^{\ast}\Sp_1)$ such that
\begin{equation}\label{pb1}
\begin{split}
\varphi^{\ast} (\dint f) &= \dint (f \circ \varphi), \quad \quad \quad \forall f \in \Sclass_\loc (\Sp_2),\\
\varphi^{\ast} (g \omega) &= g \circ \varphi \,\varphi^{\ast} \omega, \quad \quad \forall g \in \Lint^0 (\Sp_2),\ \omega \in \Lint^0 (\mathit{T}^{\ast} \Sp_2),
\end{split}
\end{equation}
and such map satisfies
\begin{equation} \label{pb2}
|\varphi^\ast \omega|  \le \lip\, \varphi\, |\omega|  \circ \varphi \quad \quad \mea_1 \text{-a.e.},\qquad \forall \omega \in \Lint^0(\mathit{T}^{\ast}\Sp_2).
\end{equation}
\end{thmdef}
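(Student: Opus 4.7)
The plan is to factor $\varphi^\ast$ through the pullback module supplied by Theorem/Definition~\ref{pbmod}. Write $\mathscr{N}:=[\varphi^\ast]\Lint^0(T^\ast\Sp_2)$ and let $[\varphi^\ast]:\Lint^0(T^\ast\Sp_2)\to\mathscr{N}$ be the associated pullback map. I would first construct a $\Lint^0(\Sp_1)$-linear continuous map $T:\mathscr{N}\to\Lint^0(T^\ast\Sp_1)$ characterized by $T\bigl([\varphi^\ast](\d f)\bigr)=\d(f\circ\varphi)$ for every $f\in\Sclass_\loc(\Sp_2)$, and then set $\varphi^\ast:=T\circ[\varphi^\ast]$. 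Uniqueness of $\varphi^\ast$ satisfying~\eqref{pb1} is then automatic: both identities in~\eqref{pb1} force the value of $\varphi^\ast$ on $\Lint^0$-linear combinations of differentials, and these are dense in $\Lint^0(T^\ast\Sp_2)$ by property~(ii) of Theorem/Definition~\ref{thm:defd}.

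For the construction of $T$, I would define it first on the dense subset $\mathsf{Ppb}/\!\sim$ of $\mathscr{N}$ on classes of the form $[(\d f_i,A_i)]$ by
\[
T\bigl([(\d f_i,A_i)]\bigr):=\sum_i\nchi_{A_i}\,\d(f_i\circ\varphi).
\]
The crucial ingredient is the inequality~\eqref{diffbound}: applied to $f_i-g_j$ on $A_i\cap B_j$, it shows that the formula respects the equivalence relation defining $\mathsf{Ppb}/\!\sim$, hence $T$ is well-defined; applied pointwise and combined with the explicit description of the pointwise norm on $\mathsf{Ppb}/\!\sim$ from the proof of Theorem/Definition~\ref{pbmod}, it yields the inequality $|T(V)|\leq\lip\varphi\cdot|V|$ $\mm_1$-a.e.\ for every such $V$.

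The main obstacle is the extension of $T$ to all of $\mathscr{N}$, since $\lip\varphi$ is only locally bounded and thus $T$ is not globally Lipschitz in the $\sfd_0$-metric. I would handle this by fixing a countable Borel partition $(B_n)$ of $\Sp_1$ into bounded sets on which $\lip\varphi$ is essentially bounded by the local deformation constant $L(B_n)$: the pointwise estimate above then controls the restriction $\nchi_{B_n}T(\cdot)$ in the $\sfd_0$-distance up to the factor $L(B_n)$, so $T$ is continuous when tested against each piece, and $\Lint^0(\Sp_1)$-linearity lets one reassemble the pieces into a continuous map on $\mathscr{N}$. The pointwise inequality passes to the $\sfd_0$-limit, giving~\eqref{pb2}. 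Finally, the compatibility identities~\eqref{pb1} for $\varphi^\ast:=T\circ[\varphi^\ast]$ are then immediate: the first is the defining property of $T$, while the second follows from property~(1) of $[\varphi^\ast]$ combined with the $\Lint^0(\Sp_1)$-linearity of $T$.
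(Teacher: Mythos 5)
Your proof is correct and follows essentially the same route as the paper: the formula on simple combinations of differentials, the use of inequality \eqref{diffbound} to get well-posedness, continuity and the bound \eqref{pb2}, and the extension by density are all identical in substance. The only difference is organizational — you factor $\varphi^\ast$ through the pullback module $[\varphi^\ast]\Lint^0(T^\ast\Sp_2)$ of Theorem \ref{pbmod}, whereas the paper defines $\varphi^\ast$ directly on forms $\omega=\sum_i\nchi_{E_i}\d f_i$ with $(E_i)$ a finite Borel partition of $\Sp_2$ — and this changes nothing essential.
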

\begin{proof} The requirements  \eqref{pb1} force the definition
\begin{equation}
\label{eq:pb4}
\varphi^\ast \omega:= \displaystyle \sum_i \nchi_{\varphi^{-1}(E_i)} \dint (f_i \circ \varphi)\qquad\text{for}\qquad\omega=\sum_i\nchi_{E_i}\d f_i,
\end{equation}
for $(E_i)$ finite Borel partition of $\Sp_2$ and $(f_i)\subset\Sclass_\loc(\Sp_2)$. The bound
\begin{equation}
\label{eq:pb3}
|\varphi^{\ast}\omega|  = \displaystyle \sum_i 	\nchi_{\varphi^{-1}(E_i)} |\dint (f_i \circ \varphi)|  \stackrel{\eqref{diffbound}}\le \lip\,\varphi\, \sum_i \nchi_{E_i} \circ \varphi \, |\dint f_i|  \circ \varphi = \lip\,\varphi \,|\omega|  \circ \varphi,
\end{equation}
grants both that the definition of $\varphi^*\omega$ is well-posed (i.e.\ its value depends only on $\omega$ and not in how it is written as $\sum_i\nchi_{E_i}\d f_i$) and that $\varphi^*$ is continuous from the space of $\omega$'s as in \eqref{eq:pb4}  with the $\Lint^0 (\mathit{T}^{\ast}\Sp_2)$-topology to $\Lint^0 (\mathit{T}^{\ast}\Sp_1)$. Since the class of such $\omega$'s is dense in $\Lint^0 (\mathit{T}^{\ast}\Sp_2)$, we  can be uniquely extend $\varphi^*$ to a continuous map from $\Lint^0 (\mathit{T}^{\ast}\Sp_2)$ to $\Lint^0 (\mathit{T}^{\ast}\Sp_1)$.

The resulting extension satisfies the first in \eqref{pb1} by definition, while \eqref{pb2} comes from \eqref{eq:pb3}. The second in \eqref{pb1} for simple functions $g$ is a direct consequence of the definition \eqref{eq:pb4}, then the general case follows by approximation.\end{proof}

\subsection{Calculus on   product  spaces}\label{se:calcprod}
\subsubsection{Cotangent module and product of spaces}
Let $\mmso$ and $\mmst$ be two metric measure spaces. Aim of this section is to relate the cotangent modules of $\Sp_1,\Sp_2$ to that of the product space $\Sp_1 \times \Sp_2$, which will be always implicitly endowed with the product measure and the distance
\[
(\sfd_1\otimes\sfd_2)^2\big((x_1,x_2),(y_1,y_2)\big):=\sfd_1^2(x_1,y_1)+\sfd_2^2(x_2,y_2).
\]
Let $\pi_i:\Sp_1\times\Sp_2\to \Sp_i$, $i=1,2$ be the canonical projections, observe that they are of local bounded deformation and recall from Proposition \ref{isopull} and the discussion before it that $L^0(\Sp_2,L^0(T^*\Sp_1))\sim [\pi_2^*]L^0(T^*\Sp_1)$ canonically carries the structure of $L^0(\Sp_1\times\Sp_2)$-normed module.

We then start with the following simple and general fact:
\begin{proposition}\label{prop:defPhi}
Let $\mmso$ and $\mmst$ be two metric measure spaces. Then there exists a unique $L^0(\Sp_1\times\Sp_2)$-linear and continuous map $\Phi_1:L^0(\Sp_2,L^0(T^*\Sp_1))\to L^0(T^*(\Sp_1\times  \Sp_2))$ such that
\begin{equation}
\label{eq:reqphi1}
\Phi_1(\widehat{\d g})=\d(g\circ\pi_1)\qquad\forall g\in \Sclass_\loc(\Sp_1),
\end{equation}
where $\widehat{\d g}:\Sp_2\to L^0(T^*\Sp_1)$ is the function identically equal to $\d g$. Such map preserves the pointwise norm.

Similarly, there is a unique $L^0(\Sp_1\times\Sp_2)$-linear and continuous map $\Phi_2:L^0(\Sp_1,L^0(T^*\Sp_2))\to L^0(T^*(\Sp_1\times  \Sp_2))$ such that
\[
\Phi_2(\widehat{\d h})=\d(h\circ\pi_1)\qquad\forall h\in \Sclass_\loc(\Sp_2),
\]
where $\widehat{\d h}:\Sp_1\to L^0(T^*\Sp_2)$ is the function identically equal to $\d h$, and  such map preserves the pointwise norm.
\end{proposition}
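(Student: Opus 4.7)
The plan is to construct $\Phi_1$ on a dense subclass of $L^0(\Sp_2, L^0(T^\ast\Sp_1))$ and extend by continuity, the key non-trivial ingredient being the pointwise product formula $|\d(g\circ\pi_1)|=|\d g|\circ\pi_1$ on $\Sp_1\times\Sp_2$. Uniqueness will be immediate from density: by Proposition \ref{isopull} combined with property $(2)$ of Theorem \ref{pbmod}, the $L^0(\Sp_1\times\Sp_2)$-linear combinations of constant maps $\widehat{\omega}$, $\omega\in L^0(T^\ast\Sp_1)$, are dense in $L^0(\Sp_2, L^0(T^\ast\Sp_1))$; since $L^0(T^\ast\Sp_1)$ is itself generated by differentials of $\Sclass_\loc(\Sp_1)$-functions (Theorem \ref{thm:defd}$(ii)$), the $L^0$-linear combinations of the $\widehat{\d g}$'s are already dense, so at most one continuous $L^0(\Sp_1\times\Sp_2)$-linear map can satisfy \eqref{eq:reqphi1}.

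For existence, on a simple element $V=\sum_i\nchi_{A_i}\widehat{\d g_i}$ with $(A_i)$ a finite Borel partition of $\Sp_1\times\Sp_2$ and $g_i\in\Sclass_\loc(\Sp_1)$, the defining identities force
\[
\Phi_1(V):=\sum_i\nchi_{A_i}\,\d(g_i\circ\pi_1).
\]
Well-posedness (independence of the representation) and the pointwise-norm identity $|\Phi_1(V)|=|V|$ both reduce, via the locality of the differential and the description of the pointwise norm on $[\pi_1^\ast]L^0(T^\ast\Sp_1)\simeq L^0(\Sp_2,L^0(T^\ast\Sp_1))$ inherited from property $(1)$ of Theorem \ref{pbmod}, to the pointwise identity
\[
|\d(g\circ\pi_1)|(x_1,x_2)=|\d g|(x_1)\qquad(\mea_1\otimes\mea_2)\text{-a.e.,}\quad\forall g\in\Sclass_\loc(\Sp_1).
\]

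The inequality $\le$ is immediate from \eqref{pb2} applied to $\varphi=\pi_1$, since $\lip\,\pi_1\le 1$. For the reverse inequality, given a test plan $\ppi$ on $\Sp_1$ and any Borel $B\subset\Sp_2$ with $0<\mea_2(B)<\infty$, I would form the plan $\tilde\ppi$ on $\Sp_1\times\Sp_2$ as the image of $\ppi\otimes\mea_2(B)^{-1}\mea_2\restr{B}$ under $(\gamma,y)\mapsto \bigl(t\mapsto(\gamma_t,y)\bigr)$; this is easily seen to be a test plan, as vertical constant curves add nothing to the metric speed and $(\e_t)_\ast\tilde\ppi=(\e_t)_\ast\ppi\otimes\mea_2(B)^{-1}\mea_2\restr{B}$ is dominated by $\mea_1\otimes\mea_2$. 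Testing the upper-gradient inequality for $g\circ\pi_1$ against $\tilde\ppi$ and integrating out $y$ via Fubini shows that $x_1\mapsto \mea_2(B)^{-1}\int_B |\d(g\circ\pi_1)|(x_1,y)\,\d\mea_2(y)$ is a weak upper gradient of $g$ for $\ppi$; varying $\ppi$, minimality of $|\d g|$ yields
\[
|\d g|(x_1)\le\frac{1}{\mea_2(B)}\int_B|\d(g\circ\pi_1)|(x_1,y)\,\d\mea_2(y)\qquad\mea_1\text{-a.e.\ }x_1.
\]
Combining with the upper bound forces $|\d(g\circ\pi_1)|(x_1,\cdot)$ to be essentially bounded above by each of its own averages on sets of positive finite measure, which by a direct integration gives essential constancy on every such $B$; exhausting $\Sp_2$ by an increasing sequence of such sets then yields global $\mea_2$-a.e.\ constancy, and the common value must be $|\d g|(x_1)$.

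With the product formula in hand, $\Phi_1$ is a well-defined pointwise-norm isometry on the dense space of simple elements and so extends uniquely by continuity to all of $L^0(\Sp_2,L^0(T^\ast\Sp_1))$; $L^0(\Sp_1\times\Sp_2)$-linearity is manifest on simple elements and passes to the limit. The map $\Phi_2$ is obtained by the completely symmetric construction with $\pi_2$ in place of $\pi_1$. The main obstacle is the product formula: without equality one only recovers a contraction rather than an isometry, and fabricating the right lifted test plans on the product to extract pointwise (as opposed to averaged) information is the delicate technical step.
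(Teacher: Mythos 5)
Your proposal is correct and follows essentially the same route as the paper: define $\Phi_1$ on simple elements, get the inequality $\le$ from the $1$-Lipschitz projection, and prove the reverse inequality for the key identity $|\d(g\circ\pi_1)|=|\d g|\circ\pi_1$ by lifting a test plan on $\Sp_1$ to the product via vertically constant curves weighted by a bounded-density measure on $\Sp_2$, then squeezing the averaged weak upper gradient between the two bounds. The only differences are cosmetic (the paper uses a general $\mu\in\proba(\Sp_2)$ with bounded density where you normalize $\mea_2\restr B$, and it writes simple elements via products of partitions), so no further comment is needed.
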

\begin{proof} We shall prove the claims for $\Phi_1$, as then the ones for $\Phi_2$ follow by symmetry. The required $L^0$-linearity and \eqref{eq:reqphi1} force the definition
\begin{equation}
\label{eq:defphi1}
\Phi_1(W):=\sum_{i,j}\nchi_{A_i}\circ\pi_1\nchi_{B_j}\circ\pi_2\,\d(g_{i,j}\circ\pi_1) \qquad\text{for}\qquad W=\sum_{i,j}\nchi_{B_j}(\nchi_{A_i}\d g_{i,j})
\end{equation}
where  $(A_i),(B_j)$ are finite Borel partitions of $\Sp_1,\Sp_2$ respectively and $(g_{i,j})\subset \Sclass_\loc(\Sp_1)$. Since $\pi_1$ is 1-Lipschitz we have
\begin{equation}
\label{eq:normPhi1}
|\Phi_1(W)|=\sum_i\nchi_{B_i}\circ\pi_2|\d(g_i\circ\pi_1)|\stackrel{\eqref{diffbound}}\leq\sum_i\nchi_{B_i}\circ\pi_2|\d g_i|\circ\pi_1=|W|
\end{equation}
which shows both that \eqref{eq:defphi1} provides a good definition for $\Phi_1(W)$, in the sense that $\Phi_1(W)$ depends only on $W$ and not on the way we write it as $\sum_{i,j}\nchi_{B_j}(\nchi_{A_i}\d g_{i,j})$, and that it is continuous. The definition also ensures that $\Phi_1(fW)=f\Phi_1(W)$ for $f\in L^0(\Sp_1\times\Sp_2)$ of the form $\sum_{i,j}\alpha_{i,j}\nchi_{A_i}\nchi_{B_j}$ for $(A_i),(B_j)$ finite Borel partitions of $\Sp_1,\Sp_2$ respectively and $(\alpha_{i,j})\subset\R$. 

Since these functions are dense in $L^0(\Sp_1\times\Sp_2)$ and $W$'s as in \eqref{eq:defphi1}  are dense in $L^0(\Sp_1,L^0(T^*\Sp_2))$, this is enough to show existence and uniqueness of a $L^0$-linear and continuous $\Phi_1$ for which \eqref{eq:reqphi1} holds and, from \eqref{eq:normPhi1}, that for such $\Phi_1$ we have 
\begin{equation}
\label{eq:leqPhi1}
|\Phi_1(W)|\leq |W|\qquad \mea_1\times\mea_2-a.e..
\end{equation}
Thus to conclude it is  sufficient to show that equality holds and, by the very same arguments just given, to this aim it is sufficient to show that
\begin{equation}
\label{eq:suffPhi1}
|\d g|\circ\pi_1= |\d(g\circ\pi_1)|\quad\mea_1\times\mea_2-a.e.\qquad\forall g\in\Sclass_\loc(\Sp_1).
\end{equation}
It is now convenient to consider the map
\[
\begin{split}
\mathrm{T} \colon \mathit{C}([0, 1], \Sp_1) \times \Sp_2 \quad&\rightarrow \quad\mathit{C}([0, 1], \Sp_1 \times \Sp_2)\\
(t\to \gamma_t, x_2)\quad & \mapsto\quad t\to (\gamma_t, x_2).
\end{split}
\]
Notice that 
\begin{equation}
\label{eq:samespeed}
\text{for any $x_2\in\X_2$ the speed of $T(\gamma,x_2)$ is equal to the speed of $\gamma$ for a.e.\ $t$}
\end{equation}
and  fix $\mu \in \proba(\Sp_2)$ such that $\mu \le \tilde{C} \mea_2$ for some $\tilde C>0$. Then for an arbitrary test plan $\bpi$ on $\Sp_1$   define
\[ 
\tilde{\bpi} := \mathrm{T}_{\ast}(\bpi \times \mu) \in \proba(\mathit{C}([0, 1], \Sp_1 \times \Sp_2))
\]
and observe that  $\tilde{\bpi}$ is a test plan on $\Sp_1\times\Sp_2$. Hence for any $g\in\Sclass_\loc(\Sp_1)$ we have
\[
\begin{split}
\displaystyle \int \abs*{g \circ e_1 - g \circ e_0} \ \dint \bpi &=  \int \abs*{g \circ \pi_1 \circ e_1 - g \circ \pi_1 \circ e_0} \ \dint \tilde{\bpi} \\
& \le \int_0^1 \int \abs*{\dint (g \circ \pi_1)}(\tilde{\gamma}_t) \abs{\dot{\tilde{\gamma}}_t} \ \dint \tilde{\bpi}(\tilde{\gamma}) \ \dint t \\
\text{by \eqref{eq:samespeed}}\qquad\qquad&= \displaystyle \int_0^1 \int \left( \int \abs*{\dint (g \circ \pi_1)}(\mathrm{T}(\gamma, x_2)_t) \ \dint \mu(x_2)  \right) \abs{\dot{\gamma}_t} \ \dint \bpi(\gamma) \ \dint t,
\end{split}
\]
so that  the arbitrariness of $\bpi$ gives that  the function $\int \abs*{\dint (g \circ \pi_1)}(\cdot,x_2) \dint \mu(x_2)$ is a weak upper gradient of $g$. Therefore for $\mea_1$-a.e.\ $x_1$ we have
\[ 
\abs{\dint g}(x_1) \le \int \abs*{\dint (g \circ \pi_1)}(x_1,x_2) \ \dint \mu(x_2)\stackrel{\eqref{diffbound}}\leq \int |\dint g| \circ \pi_1(x_1,x_2) \ \dint \mu(x_2)=\abs{\dint g}(x_1).
\]
Hence the inequalities are  equalities and the arbitrariness of $\mu$ gives \eqref{eq:suffPhi1}.
\end{proof}

It seems hard to obtain any further relation between the cotangent modules in full generality, for this reason from now on we shall make two structural assumptions. In the following, for any function $f (x_1, x_2)$ on the product space $\Sp_1 \times \Sp_2$, we define $f^{x_1}(\cdot) := f(x_1, \cdot)$ and, similarly, $f_{x_2}(\cdot) := f(\cdot, x_2)$. 
\begin{definition}[Tensorization of the Cheeger energy]\label{def:tensch}
We say that  two metric measure spaces $\mmso$ and $\mmst$ have the property of tensorization of the Cheeger energy provided for any $f \in \Lint^2(\Sp_1 \times \Sp_2)$ the following holds:  $ f\in \W(\Sp_1\times \Sp_2)$ if and only if
\begin{itemize}
\item[-]  $f^{x_1}\in\W (\Sp_2)$ for $\mea_1$-a.e. $x_1 \in \Sp_1$ and $\iint \abs{\dint f^{x_1}}^2\,\d\mea_2\,\d\mea_1(x_1)<\infty$ 
\item[-] $f_{x_2} \in \W (\Sp_1)$ for $\mea_2$-a.e. $x_2 \in \Sp_2$ with $\iint\abs{\dint f_{x_2}}^2\,\d\mea_1\,\d\mea_2(x_2)<\infty$
\end{itemize}
and in this case it holds
\begin{equation} \label{Chpr}
\abs{\dint f}^2 (x_1, x_2) = \abs{\dint f^{x_1}}^2(x_2) + \abs{\dint f_{x_2}}^2(x_1) \qquad\mea_1 \times \mea_2-a.e.\ (x_1,x_2).
\end{equation}
\end{definition}

%In particular, if the function $f$ is of the form $f(p, q) = g(p) h(q)$ with $g \in \Lint^{\infty} \cap \W(\Sp_1)$ and $h \in \Lint^{\infty} \cap \W(\Sp_2)$, (\ref{Chpr}) takes the form
%\[  \abs{\dint f}^2(p, q) = (g(p))^2 \abs{\dint h}^2(q)  + (h(q))^2 \abs{\dint g}^2(p), \quad \quad \text{for}\, \,\mea_1 \times \mea_2\, \,\text{a.e.}\, (p, q). \bigskip \] 
Notice that for $ g \in \Lint^{\infty} \cap \W(\Sp_1)$ and $h \in \Lint^{\infty} \cap \W (\Sp_2)$ both with bounded support, the function $g\circ\pi_1\,h\circ\pi_2$ has bounded support and is in $\Lint^{\infty} \cap \W(\Sp_1\times\Sp_2)$, its differential being given by
\begin{equation}
\label{eq:pera}
\d (g\circ\pi_1\,h\circ\pi_2)=g\circ\pi_1\,\d(h\circ\pi_2)+h\circ\pi_2\,\d(g\circ\pi_1).
\end{equation}
\begin{definition}[Density of the product algebra]\label{def:densalg}
We say that  two metric measure spaces $\mmso$ and $\mmst$ have the property of density of product algebra  if the set
\begin{equation}
\label{eq:prodalg}
\mathcal{A} := \Big\{  \displaystyle \sum_{j=1}^n g_j\circ\pi_1\, h_j\circ\pi_2\ :\ n\in\N,\  \begin{array}{l}
g_j \in \Lint^{\infty} \cap \W(\Sp_1)\text{ has bounded support}\\
 h_j \in \Lint^{\infty} \cap \W (\Sp_2) \text{ has bounded support}
 \end{array}
\forall j=1,\ldots,n\Big\}
\end{equation}
is dense in $\W(\Sp_1 \times \Sp_2)$ in the strong topology of $\W(\Sp_1 \times \Sp_2)$.
\end{definition}

From now on we will always assume the following:
\begin{assumption}\label{ass} $\mmso$ and $\mmst$ are two metric measure spaces for which both the  tensorization of Cheeger energy and the density of the product algebra hold.
\end{assumption}
It is worth to underline that no couple of spaces $\Sp_1,\Sp_2$ are known for which Assumption \ref{ass} does not hold. On the other hand, it is unclear if that holds in full generality. The first results about the tensorization of Cheeger energies being given in \cite{AmbrosioGigliSavare11-2} for the cases of two $\RCD$ spaces with finite mass, for our purposes the following result covers the cases of interest:
\begin{proposition}\label{prop:prodrcd}
Let $(\Sp_1,\sfd_1,\mm_1)$ be a $\RCD(K,\infty)$ spaces and let $\Sp_2$ be the Euclidean space $\R^N$ equipped with the Euclidean distance and the Lebesgue measure.

Then both the tensorization of the Cheeger energy and the density of the product algebra hold.
\end{proposition}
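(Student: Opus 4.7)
The plan is to treat the two assertions separately, exploiting the smooth structure of the Euclidean factor; mollification in the $\R^N$-variable is the main technical tool throughout.

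For tensorization, the easy inequality $|\dint f^{x_1}|^2(x_2) + |\dint f_{x_2}|^2(x_1) \leq |\dint f|^2(x_1,x_2)$ holding $\mea_1 \times \Leb^N$-a.e.\ follows from a test plan argument in the spirit of Proposition \ref{prop:defPhi}. Given a test plan $\bpi$ on $\Sp_1$ and a probability $\mu$ with bounded density w.r.t.\ $\Leb^N$, the plan $\mathrm T_\ast(\bpi \times \mu)$ is a test plan on the product whose curves move only in the $\Sp_1$-direction, yielding that $f_{x_2} \in \Sclass_\loc(\Sp_1)$ for $\Leb^N$-a.e.\ $x_2$ with the expected slicewise bound on its minimal weak upper gradient. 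Symmetric arguments, lifting affine curves $t \mapsto (x_1, x_2 + tv)$ with $v$ ranging over an orthonormal basis of $\R^N$, yield the analogous property for $f^{x_1}$. The Pythagorean form of the product distance $(\sfd_1\otimes\sfd_{\R^N})^2 = \sfd_1^2 + |\cdot|^2$ then combines these one-sided bounds into the pointwise sum-of-squares estimate.

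For the reverse inequality, let $\rho_\eps$ be a smooth compactly supported Euclidean mollifier and set $f_\eps(x_1,x_2) := \int_{\R^N} \rho_\eps(x_2-y) f(x_1,y)\,\dint y$. Fubini gives $f_\eps \in L^2$ with $f_\eps \to f$ in $L^2$; applying the easy direction to $f - f_\eps$ together with lower semicontinuity of the Cheeger energy upgrades this to $\W(\Sp_1 \times \R^N)$-convergence. For $\eps > 0$, $f_\eps$ is classically smooth in $x_2$, and since convolution in $x_2$ commutes with the $\Sp_1$-direction differential, identity \eqref{Chpr} is a direct pointwise verification for $f_\eps$; passing to the limit establishes it for $f$.

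For density of $\mathcal A$, further truncate $f_\eps$ with a cutoff $\eta_R \in C^\infty_c(\R^N)$ equal to $1$ on $B_R(0)$; using \eqref{eq:pera} and dominated convergence, $\eta_R(x_2) f_\eps \to f_\eps$ in $\W$ as $R \to \infty$. The resulting function is smooth and compactly supported in $x_2$ and can be expanded as a Fourier series on a torus $[-L,L]^N \supset \supp(\eta_R)$ in the form $\sum_k c_k(x_1)\,e_k(x_2)$, with $e_k$ the smooth trigonometric basis (multiplied again by $\eta_R$ to retain compact $\R^N$-support) and slicewise coefficients $c_k \in \W(\Sp_1)$ of bounded support. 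Finite partial sums lie in $\mathcal A$ and converge to $\eta_R f_\eps$ in $\W(\Sp_1\times\R^N)$ by the tensorization just proved; a diagonal extraction in $\eps$, $R$ and the Fourier truncation yields the desired approximation of $f$.

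The main obstacle is the \emph{equality} (as opposed to the easy inequality) in \eqref{Chpr}: it encodes the orthogonality of the two gradient components in the tangent structure of the product. The Euclidean mollification strategy bypasses this difficulty because once $f_\eps$ is smooth in $x_2$ the two partial gradients are manifestly independent, and the sum-of-squares identity reduces to an elementary computation; the limiting procedure then transfers it to arbitrary $f \in \W(\Sp_1 \times \R^N)$.
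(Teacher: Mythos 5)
There is a genuine gap at the heart of your tensorization argument, and it sits exactly where all the difficulty of the statement lives: the orthogonality identity \eqref{Chpr}. Neither of your two routes to it works as described. First, lifting test plans from each factor separately (via $\mathrm{T}_\ast(\bpi\times\mu)$ and its vertical analogue) only yields the two separate bounds $|\dint f_{x_2}|(x_1)\le|\dint f|(x_1,x_2)$ and $|\dint f^{x_1}|(x_2)\le|\dint f|(x_1,x_2)$; adding their squares gives $2|\dint f|^2$ on the right, and no appeal to ``the Pythagorean form of the product distance'' converts this into the sum-of-squares bound, because test plans whose curves move in only one direction never detect the cross term. Obtaining the sum requires plans moving in both directions simultaneously together with an optimization in the direction, i.e.\ the genuine duality/orthogonality argument. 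Second, and more seriously, the claim that for $f_\eps$ smooth in the $\R^N$-variable the identity \eqref{Chpr} becomes ``a direct pointwise verification'' is unsubstantiated: $|\dint f_\eps|$ is the minimal weak upper gradient for the product metric, and to estimate $f_\eps(\gamma_1,y_1)-f_\eps(\gamma_0,y_0)$ along a genuinely two-dimensional test-plan curve by $\sqrt{|\dint (f_\eps)_{y}|^2+|\nabla_y f_\eps|^2}$ times the product speed one must evaluate $|\nabla_y f_\eps|$ at mismatched points $(\gamma_s,y_t)$, which requires joint regularity in both variables that mollification in $y$ alone does not provide. Note that already the special case $f=g\circ\pi_1\,h\circ\pi_2$ with $h$ smooth contains the full orthogonality statement \eqref{eq:phi122}, which in the paper is \emph{deduced from} the assumed tensorization, not proved by inspection. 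A smaller issue: your justification that $f_\eps\to f$ in $\W(\Sp_1\times\R^N)$ (``applying the easy direction to $f-f_\eps$ together with lower semicontinuity'') is also not a proof; the standard route is $\mathrm{Ch}(f_\eps)\le\mathrm{Ch}(f)$ by Jensen and translation invariance in the Euclidean factor, lower semicontinuity, and then uniform convexity of the Hilbertian $\W$-norm.

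For comparison, the paper does not attempt a self-contained proof: it invokes \cite{GH15}, where both the tensorization and the density of $\mathcal A$ \emph{in energy} are established for $\Sp_2=\R$ and arbitrary $\Sp_1$, upgrades energy convergence to strong $\W$-convergence using that $\W(\Sp_1\times\R)$ is a Hilbert space, and concludes by induction on $N$. If you want a direct argument you would have to reproduce the analytic core of \cite{GH15}. Your Fourier-series reduction of the density statement to the tensorization is reasonable in outline (modulo cutoffs in the $\Sp_1$-variable, since the coefficients $c_k$ need not be bounded with bounded support), but it rests entirely on the part of the proof that is missing.
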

\begin{proof}
In \cite{GH15} it has been proved that for arbitrary $\mmso$ and for $\Sp_2=\R$ the tensorization of the Cheeger energy holds and the algebra $\mathcal A$ is dense in energy, i.e.: for any $f\in \W(\Sp_1\times\R)$ there is $(f_n)\subset\mathcal A$ such that $f_n\to f$ and $|\d f_n|\to|\d f|$ in $L^2(\Sp_1\times\R)$. %The construction provided in  \cite{GH15} also ensures that $(f_n)$ can be chosen to be uniformly bounded if $f\in L^\infty$.

If $\mmso$ is infinitesimally Hilbertian (which is the case for $\RCD$ spaces), then the tensorization of the Cheeger energy ensures that $\W(\Sp_1\times\R)$ is a Hilbert space, so that the uniform convexity of the norm grants that convergence in energy implies strong $\W$-convergence.

Thus the thesis is true for $\Sp_2=\R$. The general case follows by a simple induction argument.
\end{proof}

With this said, we shall now continue the investigation of the relation between cotangent modules and products of spaces. We start with the following result; notice that  the density of the product algebra is used to show that for $f\in  \Sclass_\loc(\Sp_1 \times \Sp_2)$ the map  $x_2 \mapsto \dint f_{x_2} \in \Lint^0(T^*\Sp_1)$ is essentially separably valued.
\begin{lemma}\label{le:perprod}
Let $\mmso$ and $\mmst$ be two metric measure spaces satisfying Assumption \ref{ass}. 

Then for every $f \in \Sclass_\loc(\Sp_1 \times \Sp_2)$ we have that $f_{x_2} \in\Sclass_\loc(\Sp_1)$ for $\mea_2$-a.e.\ $x_2$ and the map $x_2\mapsto  \dint f_{x_2}$ belongs to $\Lint^0(\Sp_2, \Lint^0(\mathit{T}^{\ast}\Sp_1))$. Moreover, for $(f_n)\subset\Sclass_\loc(\Sp_1\times\Sp_2)$ we have
\begin{equation}
\label{eq:convd}
\dint f_n\to \dint f\text{ in }\Lint^0(T^*(\Sp_1\times\Sp_2))\qquad\Rightarrow\qquad \dint (f_n)_{\cdot}\to \dint f_{\cdot}\text{ in }\Lint^0(\Sp_2,\Lint^0(T^*\Sp_1)).
\end{equation}
 Similarly for the roles of  $\Sp_1$ and $\Sp_2$ inverted. Finally, the identity \eqref{Chpr} holds for any $f\in\Sclass_\loc(\Sp_1\times\Sp_2)$.
 \end{lemma}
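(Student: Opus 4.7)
The plan is to first establish the result for $f\in\W(\Sp_1\times\Sp_2)$ by combining the tensorization hypothesis with the density of the product algebra, then to extend it to $\Sclass_\loc$ via the cutoff approximation \eqref{eq:apprsloc}, and finally to deduce the convergence assertion by applying the tensorization identity to the differences $f_n-f$. In the Sobolev case, the tensorization of the Cheeger energy immediately gives $f_{x_2}\in\W(\Sp_1)$ for $\mm_2$-a.e.\ $x_2$ together with the identity \eqref{Chpr}. The nontrivial point is essential separable valuedness of $x_2\mapsto \d f_{x_2}$: I would pick $(f_n)\subset\mathcal A$ with $f_n\to f$ in $\W(\Sp_1\times\Sp_2)$, observe that for $f_n=\sum_j g_j^{(n)}\circ\pi_1\,h_j^{(n)}\circ\pi_2$ one has $\d(f_n)_{x_2}=\sum_j h_j^{(n)}(x_2)\,\d g_j^{(n)}$, which is visibly Borel in $x_2$ and takes values in a finite-dimensional subspace of $\Lint^0(T^{\ast}\Sp_1)$; then \eqref{Chpr} applied to $f_n-f$ gives $\iint|\d(f_n-f)_{x_2}|^2\,\d\mm_1\d\mm_2\to 0$, and a subsequence argument yields $\d(f_n)_{x_2}\to\d f_{x_2}$ in $\Lint^2(T^{\ast}\Sp_1)$ for $\mm_2$-a.e.\ $x_2$, transferring Borel measurability and essential separability to the limit.

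For general $f\in\Sclass_\loc(\Sp_1\times\Sp_2)$, I would invoke \eqref{eq:apprsloc} to produce $(f_n)\subset\Sclass(\Sp_1\times\Sp_2)$ of bounded functions with bounded support (hence all in $\W$), such that $A_n:=\{f=f_n\}$ satisfies $(\mm_1\times\mm_2)\bigl((\Sp_1\times\Sp_2)\setminus\bigcup_n A_n\bigr)=0$. By Fubini, for $\mm_2$-a.e.\ $x_2$ the vertical sections $A_n^{x_2}$ cover $\Sp_1$ up to a $\mm_1$-null set, and $f_{x_2}$ agrees $\mm_1$-a.e.\ with $(f_n)_{x_2}\in\W(\Sp_1)$ on $A_n^{x_2}$. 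Locality of the minimal weak upper gradient yields $f_{x_2}\in\Sclass_\loc(\Sp_1)$ with $\d f_{x_2}=\d(f_n)_{x_2}$ $\mm_1$-a.e.\ on $A_n^{x_2}$, so by pasting
\[
\d f_{\cdot}=\sum_n \nchi_{A_n\setminus\bigcup_{m<n}A_m}\,\d(f_n)_{\cdot},
\]
where each summand lies in $\Lint^0(\Sp_2,\Lint^0(T^{\ast}\Sp_1))$ by the first step and the partial sums converge in $\Lint^0$ since their supports exhaust $\bigcup_n A_n$. The identity \eqref{Chpr} for $f$ then follows from \eqref{Chpr} for each $f_n$ via locality of $\d$ applied to all three terms on $A_n$ (using that $f_{x_2}=(f_n)_{x_2}$ on $A_n^{x_2}$ and $f^{x_1}=(f_n)^{x_1}$ on $A_n^{x_1}$).

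For the convergence claim, assume $\d f_n\to \d f$ in $\Lint^0(T^{\ast}(\Sp_1\times\Sp_2))$ and apply \eqref{Chpr} to $f_n-f\in\Sclass_\loc$, obtaining the pointwise bound $|\d(f_n-f)_{x_2}|^2(x_1)\le |\d(f_n-f)|^2(x_1,x_2)$. The left-hand side, read as a function on $\Sp_1\times\Sp_2$, is precisely the square of the pointwise norm of $\d(f_n)_{\cdot}-\d f_{\cdot}$ in the $\Lint^0(\Sp_1\times\Sp_2)$-module $\Lint^0(\Sp_2,\Lint^0(T^{\ast}\Sp_1))$, so the assumed $\Lint^0$-convergence descends as required. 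The main obstacle is the essential separability claim in the Sobolev case: without the density of the product algebra, the abstract existence of slice derivatives provided by the tensorization hypothesis gives no reason for $x_2\mapsto \d f_{x_2}$ to be strongly measurable into $\Lint^0(T^{\ast}\Sp_1)$. The algebra $\mathcal A$ circumvents this by supplying explicit finite-rank approximants whose slice derivatives are automatically separably valued, and the tensorization identity propagates this structure first to all of $\W$ and then, via the cutoff, to $\Sclass_\loc$.
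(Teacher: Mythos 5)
Your proposal is correct and follows essentially the same route as the paper's proof: explicit slice differentials on the product algebra $\mathcal A$, transfer to all of $\W(\Sp_1\times\Sp_2)$ via the domination $|\d f_{x_2}|\le|\d f|$ coming from \eqref{Chpr}, extension to $\Sclass_\loc$ by the cutoff approximation \eqref{eq:apprsloc} and locality, and the convergence claim from the same pointwise bound applied to differences. The only cosmetic difference is that you transfer strong measurability to the limit by a.e.\ convergence of a subsequence and write the pasting over the sets $A_n$ explicitly, where the paper argues with Cauchy sequences in $\Lint^2(\Sp_2,\Lint^2(T^*\Sp_1))$; both are standard and equivalent.
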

\begin{proof} Let $f  = g\circ\pi_1 \, h\circ\pi_2$ for some $g \in\Lint^\infty\cap \W(\Sp_1)$ and $h \in\Lint^\infty\cap \W(\X_2)$ with bounded supports and notice that $\dint f_{x_2} = h(x_2) \, \dint g$ for every $x_2\in\Sp_2$. Hence $x_2\mapsto \dint f_{x_2} \in\Lint^2(\Sp_2, \Lint^2(\mathit{T}^{\ast}\mathrm{X}_1))$.

By linearity, the same holds for a generic $f\in\mathcal A$. Now notice that for an arbitrary $f\in \W(\Sp_1 \times \Sp_2)$, the identity \eqref{Chpr} yields
\begin{equation}
\label{eq:perl0}
\abs{\dint f_{x_2}}^2(x_1) \leq  \abs{\d f}^2(x_1,x_2) \qquad\mea_1 \times \mea_2-a.e.\ (x_1, x_2),
\end{equation}
and thus
\begin{equation}
\label{eq:perl0l0}
\||\dint f_{\cdot}-\dint \tilde f_{\cdot}|\|_{\Lint^2(\Sp_2, \Lint^2(\Sp_1))}\leq \|f-\tilde f\|_{\W(\Sp_1 \times \Sp_2)}.
\end{equation}
Hence for $f\in \W(\Sp_1\times\Sp_2)$ arbitrary, using the density of the product algebra we can find $(f_n)\subset \mathcal A$ $\W$-converging to $f$, so that from \eqref{eq:perl0l0} we see that $\d f_\cdot\in \Lint^2(\Sp_2, \Lint^2(T^*\Sp_1))$.

For general $f\in\Sclass_\loc(\Sp_1\times\Sp_2)$, find a sequence $(f_n)\subset \W(\Sp_1\times\Sp_2)$ as in \eqref{eq:apprsloc} and use the locality of the differential to get that \eqref{eq:perl0} holds even for  $f\in\Sclass_\loc(\Sp_1\times\Sp_2)$. Thus, since clearly $\d f_n\to \d f$ in $\Lint^0(T^*(\Sp_1\times\Sp_2))$, from \eqref{eq:perl0} we also get that $|\dint (f_n)_{\cdot}-\dint f_{\cdot}|\to 0$ in $\Lint^0(\Sp_2, \Lint^0(\Sp_1))$: this proves both that $\dint f_{\cdot}\in \Lint^0(\Sp_2,\Lint^0(T^*\Sp_1))$ and that  $\dint (f_n)_{\cdot}\to \dint f_{\cdot}$ in $\Lint^0(\Sp_2,\Lint^0(T^*\Sp_1))$. 

Since this latter convergence does not depend on the particular choice of the sequence $(f_n)\subset\Sclass_\loc(\Sp_1\times\Sp_2)$ such that $\dint f_n\to \dint f$ in $\Lint^0(T^*(\Sp_1\times\Sp_2))$, we proved also \eqref{eq:convd}.

The last claim follows along the same approximation argument using the continuity property \eqref{eq:convd} (and the analogous one with $\Sp_1$ and $\Sp_2$ inverted).
\end{proof}

Let  $\mathscr M_1,\mathscr M_2$ be two $L^0$-normed modules on a space $\Sp$. Then on the product $\mathscr M_1\times\mathscr M_2$ we shall consider the structure of $L^0$-normed module given by: the product topology, the multiplication by $L^0$-functions  given by $f(v_1,v_2):=(fv_1,fv_2)$ and the pointwise norm defined as
\[
|(v_1,v_2)|^2:=|v_1|^2+|v_2|^2.
\]
It is readily verified that these actually endow $\mathscr M_1\times\mathscr M_2$ with the structure of $L^0$-normed module. 

In particular, $L^0(\Sp_2,L^0(T^*\Sp_1))\times L^0(\Sp_1,L^0(T^*\Sp_2))$ is a $L^0(\Sp_1\times\Sp_2)$-normed module and we can define $\Phi_1\oplus\Phi_2$ as
\[
\begin{array}{cccc}
\Phi_1 \oplus \Phi_2 \colon& \Lint^0 (\Sp_2, \Lint^0(\mathit{T}^{\ast}\Sp_1)) \times \Lint^0(\Sp_1, \Lint^0(\mathit{T}^{\ast}\Sp_2)) &\rightarrow  &\Lint^0 (\mathit{T}^{\ast}(\Sp_1 \times \Sp_2))\\
&(\omega, \sigma) &\mapsto& \Phi_1(\omega) + \Phi_2(\sigma)
\end{array}
\]
We then have the following result:
\begin{theorem}\label{dectang}
Let $\mmso$ and $\mmst$ be two metric measure spaces such that Assumption \ref{ass} holds. Then $\Phi_1\oplus\Phi_2$  is an isomorphism of modules, i.e. it is $L^0(\Sp_1\times\Sp_2)$-linear, continuous, 
 surjective and  for every $\omega_\cdot\in \Lint^0 (\Sp_2, \Lint^0(\mathit{T}^{\ast}\Sp_1))$ and $\sigma_\cdot\in \Lint^0 (\Sp_1, \Lint^0(\mathit{T}^{\ast}\Sp_2))$ satisfies
\begin{equation}
\label{eq:phi12}
|\Phi_1(\omega_\cdot) + \Phi_2(\sigma_\cdot)|^2=|\omega_\cdot|^2+|\sigma_\cdot|^2\quad\mea_1\times\mea_2-a.e..
\end{equation}
Moreover, for every   $f \in \Sclass_\loc(\Sp_1 \times \Sp_2)$ it holds:
\begin{equation}
\label{PhiinW}
\dint f = \Phi_1 ( \dint f_{\cdot}) + \Phi_2 ( \dint f^{\cdot} ).
\end{equation}
\end{theorem}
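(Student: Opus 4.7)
The plan is to establish the formula \eqref{PhiinW} first, then use it together with the tensorization identity \eqref{Chpr} to obtain both the norm identity \eqref{eq:phi12} and surjectivity. The $L^0(\Sp_1\times\Sp_2)$-linearity and continuity of $\Phi_1\oplus\Phi_2$ are immediate from the corresponding properties of $\Phi_1,\Phi_2$ given in Proposition \ref{prop:defPhi}, so the content of the statement lies in the norm identity and surjectivity.

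For \eqref{PhiinW}, I would first treat $f\in\mathcal{A}$: if $f=\sum_j(g_j\circ\pi_1)(h_j\circ\pi_2)$ then by \eqref{eq:pera}
\[
\d f=\sum_j(h_j\circ\pi_2)\,\d(g_j\circ\pi_1)+(g_j\circ\pi_1)\,\d(h_j\circ\pi_2),
\]
while $f_{x_2}=\sum_j h_j(x_2)\,g_j$ gives $\d f_\cdot=\sum_j h_j\,\widehat{\d g_j}$ in $L^0(\Sp_2,L^0(T^*\Sp_1))$; applying $\Phi_1$ via its $L^0$-linearity and the defining identity \eqref{eq:reqphi1}, and symmetrically for $\Phi_2$, one recovers precisely the two pieces of $\d f$. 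Extending to $f\in\W(\Sp_1\times\Sp_2)$ goes via density of $\mathcal{A}$ (Assumption \ref{ass}), continuity of $\Phi_1,\Phi_2$, and the convergence property \eqref{eq:convd}; the further extension to $f\in\Sclass_\loc$ follows from the truncation approximation \eqref{eq:apprsloc} and locality of the differentials on the product and on each factor.

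Once \eqref{PhiinW} is available, combining it with \eqref{Chpr} immediately yields \eqref{eq:phi12} for the \emph{diagonal} pairs $(\d f_\cdot,\d f^\cdot)$. For the general case I would exploit Proposition \ref{isopull} and the fact that differentials generate $L^0(T^*\Sp_i)$: finite sums of elements of the form $\nchi_C\,\widehat{\d g}$, with $C\subset\Sp_1\times\Sp_2$ Borel and $g\in\Sclass_\loc(\Sp_1)$, are dense in $L^0(\Sp_2,L^0(T^*\Sp_1))$, and similarly on the other slot. By refining partitions one reduces to pairs of the form $\omega_\cdot=\sum_i\nchi_{C_i^1}\widehat{\d g_i}$ and $\sigma_\cdot=\sum_j\nchi_{C_j^2}\widehat{\d h_j}$ with $(C_i^1),(C_j^2)$ finite Borel partitions of $\Sp_1\times\Sp_2$; on each intersection $C_i^1\cap C_j^2$ one has $\Phi_1(\omega_\cdot)+\Phi_2(\sigma_\cdot)=\d(g_i\circ\pi_1+h_j\circ\pi_2)$, whose squared pointwise norm equals $|\d g_i|^2(x_1)+|\d h_j|^2(x_2)$ by \eqref{Chpr}, matching $|\omega_\cdot|^2+|\sigma_\cdot|^2$ on that set. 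Continuity of both sides in the $L^0$-topology then yields \eqref{eq:phi12} in full generality. Surjectivity (and hence the isomorphism property) finally follows from \eqref{PhiinW}: the image of $\Phi_1\oplus\Phi_2$ contains all the differentials $\d f$ for $f\in\Sclass_\loc(\Sp_1\times\Sp_2)$, a generating family by Theorem/Definition \ref{thm:defd}, and by \eqref{eq:phi12} the map is an isometry, so its image is closed and therefore coincides with the whole $L^0(T^*(\Sp_1\times\Sp_2))$.

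The main difficulty is the verification of \eqref{eq:phi12}: the identity is Pythagorean in nature and does \emph{not} follow formally from the pointwise-norm preservation of $\Phi_1$ and $\Phi_2$ separately, because the cotangent module is not assumed Hilbertian. What makes the identity work is the strong, pointwise ``sum-of-squares'' form of the tensorization \eqref{Chpr}, which encodes precisely the orthogonality between the $\pi_1$- and $\pi_2$-directions that one needs.
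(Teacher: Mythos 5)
Your proposal is correct and follows essentially the same route as the paper: \eqref{PhiinW} is obtained by direct computation on products $g\circ\pi_1\,h\circ\pi_2$ and then density/locality, \eqref{eq:phi12} is reduced by $L^0$-linearity and locality of the pointwise norm to generators $\widehat{\d g},\widehat{\d h}$ where it becomes the tensorization identity \eqref{Chpr} (in its $\Sclass_\loc$ form from Lemma \ref{le:perprod}) applied to $f=g\circ\pi_1+h\circ\pi_2$, and surjectivity follows since the norm-preserving map has closed image containing a generating family. The only cosmetic difference is that you establish \eqref{PhiinW} before \eqref{eq:phi12} while the paper does the reverse; neither step depends on the other, so the order is immaterial.
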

\begin{proof}  From Proposition \ref{prop:defPhi}  it is clear that $\Phi_1\oplus\Phi_2$ is $L^0(\Sp_1\times\Sp_2)$-linear and continuous.  Taking into account that $ \Lint^0 (\Sp_2, \Lint^0(\mathit{T}^{\ast}\Sp_1))$ is generated by elements of the kind $\widehat{\dint g}$ for $g\in \Sclass(\Sp_1)$, where $\widehat{\dint g}\in  \Lint^0 (\Sp_2, \Lint^0(\mathit{T}^{\ast}\Sp_1))$ is the function identically equal to $\dint g$, and  similarly for  $ \Lint^0 (\Sp_1, \Lint^0(\mathit{T}^{\ast}\Sp_2))$, to prove \eqref{eq:phi12} it is sufficient to show that
\begin{equation}
\label{eq:phi122}
|\Phi_1(\widehat{\dint g}) + \Phi_2(\widehat{\dint h})|^2=|\dint g|^2\circ\pi_1+|\dint h|^2\circ\pi_2\quad\mea_1\times\mea_2-a.e.
\end{equation}
for any $g\in\Sclass(\Sp_1)$, $h\in\Sclass(\Sp_2)$. Fix such $g,h$ and put $f:=g\circ\pi_1+h\circ\pi_2\in\Sclass_\loc(\Sp_1\times\Sp_2)$. Notice that trivially $\dint f_{x_2}=\dint g$ and $\dint f^{x_1}=\dint h$ for any $x_1\in\Sp_1$ and $x_2\in\Sp_2$, hence from the tensorization of Cheeger energy (recall the last claim of Lemma \ref{le:perprod} above) we have
\[
|\Phi_1(\widehat{\dint g}) + \Phi_2(\widehat{\dint h})|^2=|\dint (g\circ\pi_1)+\dint (h\circ\pi_2) |^2=|\dint f|^2\stackrel{\eqref{Chpr}}=
|\dint f_{\cdot}|^2+|\dint f^{\cdot}|^2=|\dint g|^2\circ\pi_1+|\dint h|^2\circ\pi_2
\]
which is \eqref{eq:phi122}. Thus $\Phi_1\oplus\Phi_2$ preserves the pointwise norm.

Now we prove  \eqref{PhiinW}. Let $g\in \Lint^\infty\cap \W(\Sp_1)$ and $h\in \Lint^\infty\cap \W(\Sp_2)$ be both with bounded support and consider $f:=g\circ\pi_1\,h\circ\pi_2$. Then $f\in \W(\Sp_1\times\Sp_2)$ and  the very definition of $\Phi_1,\Phi_2$ grant that
\[
\d f=h\circ\pi_2\,\dint(g\circ\pi_1)+g\circ\pi_1\,\dint(h\circ\pi_2)=\Phi_1(h\,\dint g)+\Phi_2(g\,\dint h)=\Phi_1 \big( \dint f_{\cdot}\big) + \Phi_2 \big( \dint f^{\cdot} \big),
\]
so that in this case \eqref{PhiinW} is proved. By linearity, we get that \eqref{PhiinW} holds  for general  $f\in\mathcal A$. Then using first the density of $\mathcal A$ in $\W(\Sp_1\times\Sp_2)$ and then property \eqref{eq:apprsloc}, taking into account the convergence property \eqref{eq:convd} we conclude that \eqref{PhiinW} holds for general $f \in \Sclass_\loc(\Sp_1 \times \Sp_2)$, as claimed.

It remains to prove that $\Phi_1\oplus\Phi_2$ is surjective. By  \eqref{PhiinW} we know that its image contains the space of differential of functions in $\Sclass_\loc(\Sp_1\times\Sp_2)$, and thus $L^0$-linear combinations of them. Since it preserves the pointwise norm, its image must be closed and since $\Lint^0 (\mathit{T}^{\ast}(\Sp_1 \times \Sp_2))$ is generated by differentials of functions in $\Sclass_\loc(\Sp_1\times\Sp_2)$, this is sufficient to conclude.
\end{proof}

\subsubsection{Other differential operators in product spaces}\label{se:other}

In the previous section we have seen how the differential behaves under products of spaces. We shall now investigate other differentiation operators under the assumption that $\Sp_1,\Sp_2$ are infinitesimally Hilbertian.

We  start with the following simple orthogonality statement:
\begin{proposition} Let $\mmso$ and $\mmst$ be  infinitesimally Hilbertian spaces such that Assumption \ref{ass} holds. Then $\Sp_1\times\Sp_2$ is also infinitesimally Hilbertian and for every $\omega^1_\cdot\in \Lint^0 ( \Sp_2, \Lint^0(\mathit{T}^* \Sp_1) )$ and $\omega^2_\cdot\in \Lint^0 ( \Sp_1, \Lint^0(\mathit{T}^* \Sp_2) )$ we have
\begin{equation}
\label{eq:orto}
\la \Phi_1(\omega^1_\cdot),\Phi_2(\omega^2_\cdot)\ra=0\quad\mea_1\times\mea_2-a.e..
\end{equation}
\end{proposition}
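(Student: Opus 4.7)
The plan is to derive both conclusions directly from the isometric isomorphism $\Phi_1\oplus\Phi_2$ of Theorem \ref{dectang}, exploiting the pointwise norm identity \eqref{eq:phi12} together with the parallelogram identity available on each factor.

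\textbf{Step 1 (infinitesimal Hilbertianity).} Infinitesimal Hilbertianity of $\Sp_1\times\Sp_2$ is equivalent to the pointwise parallelogram identity holding on $\Lint^0(T^*(\Sp_1\times\Sp_2))$ (from which the parallelogram identity on $\W(\Sp_1\times\Sp_2)$ follows by integration, recalling that $|\d f|$ is the pointwise norm of $\dint f$). Given $\omega,\tilde\omega$ in this module, by Theorem \ref{dectang} I may decompose them uniquely as $\omega=\Phi_1(\alpha)+\Phi_2(\beta)$ and $\tilde\omega=\Phi_1(\tilde\alpha)+\Phi_2(\tilde\beta)$, with $\alpha,\tilde\alpha\in\Lint^0(\Sp_2,\Lint^0(T^*\Sp_1))$ and $\beta,\tilde\beta\in\Lint^0(\Sp_1,\Lint^0(T^*\Sp_2))$. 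Since each $\Sp_i$ is infinitesimally Hilbertian, the pointwise norm on $\Lint^0(T^*\Sp_i)$ satisfies the parallelogram identity, and since pointwise norms and linear combinations on $\Lint^0(\Sp_j,\Lint^0(T^*\Sp_i))$ are defined pointwise from those on $\Lint^0(T^*\Sp_i)$, the same identity transfers $\mea_1\times\mea_2$-a.e.\ to the pairs $(\alpha,\tilde\alpha)$ and $(\beta,\tilde\beta)$. Combining these two identities with \eqref{eq:phi12} applied to $\omega\pm\tilde\omega$ yields
\[
|\omega+\tilde\omega|^2+|\omega-\tilde\omega|^2 = \bigl(|\alpha+\tilde\alpha|^2+|\alpha-\tilde\alpha|^2\bigr)+\bigl(|\beta+\tilde\beta|^2+|\beta-\tilde\beta|^2\bigr) = 2|\omega|^2+2|\tilde\omega|^2,
\]
establishing the pointwise parallelogram identity, and hence the claim.

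\textbf{Step 2 (orthogonality).} Once the pointwise norm on $\Lint^0(T^*(\Sp_1\times\Sp_2))$ is induced by a pointwise scalar product, polarization gives
\[
\la \Phi_1(\omega^1_\cdot),\Phi_2(\omega^2_\cdot)\ra = \tfrac{1}{4}\bigl(|\Phi_1(\omega^1_\cdot)+\Phi_2(\omega^2_\cdot)|^2 - |\Phi_1(\omega^1_\cdot)-\Phi_2(\omega^2_\cdot)|^2\bigr)
\]
$\mea_1\times\mea_2$-a.e. Applying \eqref{eq:phi12} to each of the two terms on the right-hand side, and using $|\!-\!\omega^2_\cdot|=|\omega^2_\cdot|$, both quantities equal $|\omega^1_\cdot|^2+|\omega^2_\cdot|^2$, so the difference vanishes and the conclusion follows.

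The only subtle point is verifying that the pointwise parallelogram identity on $\Lint^0(T^*\Sp_i)$ genuinely lifts to the module $\Lint^0(\Sp_j,\Lint^0(T^*\Sp_i))$; but this is automatic from the very construction of the latter, whose pointwise norm and linear operations are inherited pointwise from those of $\Lint^0(T^*\Sp_i)$. I therefore expect no further obstacle, the whole argument reducing to an efficient bookkeeping on the isometric decomposition provided by Theorem \ref{dectang}.
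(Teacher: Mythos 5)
Your argument is correct, and for the orthogonality statement it is essentially the paper's own: the paper expands $|\Phi_1(\omega^1_\cdot)+\Phi_2(\omega^2_\cdot)|^2$ with the pointwise scalar product and compares with \eqref{eq:phi12} together with the fact that $\Phi_1,\Phi_2$ preserve pointwise norms, which is the same computation as your polarization with $\pm\omega^2_\cdot$, differing only in bookkeeping. Where you diverge is the infinitesimal Hilbertianity of $\Sp_1\times\Sp_2$: the paper disposes of it in one line, observing that by tensorization of the Cheeger energy $\ener_{\Sp_1\times\Sp_2}$ is the integral over slices of the two factor energies (via \eqref{Chpr}), each of which is a quadratic form, so their sum is quadratic and $\W(\Sp_1\times\Sp_2)$ is Hilbert. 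You instead invoke the full module decomposition of Theorem \ref{dectang} and propagate the pointwise parallelogram identity from $\Lint^0(T^*\Sp_i)$ through $\Lint^0(\Sp_j,\Lint^0(T^*\Sp_i))$ to $\Lint^0(T^*(\Sp_1\times\Sp_2))$. This is heavier machinery for the same conclusion, but it is legitimate (Theorem \ref{dectang} needs only Assumption \ref{ass}, so there is no circularity) and it buys something the paper leaves implicit: it directly establishes that the pointwise norm on the product cotangent module comes from a pointwise scalar product, which is exactly what makes the left-hand side of \eqref{eq:orto} meaningful. The one fact you should flag as an input rather than as "automatic" is that infinitesimal Hilbertianity of $\Sp_i$ yields the \emph{pointwise} (not merely integrated) parallelogram identity on $\Lint^0(T^*\Sp_i)$; this is standard from \cite{Gigli14} and is the same fact the paper uses when it speaks of pointwise scalar products, but it deserves a citation rather than the word "automatic".
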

\begin{proof} The fact that $\W(\Sp_1\times\Sp_2)$ is Hilbert is a direct consequence of the tensorization of the Cheeger energy and the assumption that both $\W(\Sp_1)$ and $\W(\Sp_2)$ are Hilbert. For \eqref{eq:orto} notice that 
\[
|\omega_\cdot^1|^2+|\omega^2_\cdot|^2\stackrel{\eqref{eq:phi12}}=| \Phi_1(\omega^1_\cdot)+\Phi_2(\omega^2_\cdot)|^2=| \Phi_1(\omega^1_\cdot)|^2+|\Phi_2(\omega^2_\cdot)|^2+2\la \Phi_1(\omega^1_\cdot),\Phi_2(\omega^2_\cdot)\ra,
\]
so that the conclusion follows recalling that $\Phi_1,\Phi_2$ preserve the pointwise norms.
\end{proof}
By means of the musical isomorphisms (recall \eqref{eq:music}) the map $\Phi_1$ induces a map, still denoted  $\Phi_1$, from $\Lint^0 ( \Sp_2, \Lint^0(\mathit{T} \Sp_1) )$ to $\Lint^0 (\mathit{T} (\Sp_1\times \Sp_2))$ via:
\[
\Phi_1(X_\cdot):=\Phi_1(X_\cdot^\flat)^\sharp.
\]
Similarly for $\Phi_2$. It is clear that these newly defined $\Phi_1,\Phi_2$ have all the properties we previously proved for the same operators viewed as acting on forms. We also notice that for any $\omega_{\cdot}\in \Lint^0(\Sp_2,\Lint^0(T^*\Sp_1))$ and $X_{\cdot}\in \Lint^0(\Sp_2,\Lint^0(T\Sp_1))$ we have
\begin{equation}
\label{eq:2phi1}
\Phi_1(\omega_\cdot)(\Phi_1(X_{\cdot}))(x_1,x_2)=\omega_{x_2}(X_{x_2})(x_1)\quad\mea_1\times\mea_2-a.e.\ (x_1,x_2).
\end{equation}
Indeed, for $\omega_\cdot\equiv \dint g$ and $X_\cdot\equiv\nabla\tilde g$ for $g,\tilde g\in\Sclass_\loc(\Sp_1)$ this is a direct consequence of the definition of $\Phi_1$ and the fact that $\Phi_1$ preserves the pointwise norm (and hence the pointwise scalar product), then the general case follows by $L^0(\Sp_1\times\Sp_2)$-bilinearity and continuity of both sides.

\begin{proposition}\label{divvf}
Let $\mmso$ and $\mmst$ be  infinitesimally Hilbertian spaces such that Assumption \ref{ass} holds. Then $ X \in D( \Div_{\loc}, \Sp_1)$ if and only if $\Phi_1(\hat X) \in D (\Div_{\loc}, {\Sp_1 \times \Sp_2})$, where $\hat X\in \Lint^0(\Sp_2,\Lint^0(T\Sp_1))$ is the function identically equal to $X$,  and in this case 
\[
\Div(\Phi_1(\hat X)) = \Div (X) \circ \pi_1.
\]
\end{proposition}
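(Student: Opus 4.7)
The plan is to verify both implications by testing against functions of product form $F = g\circ\pi_1 \cdot h\circ\pi_2$ with $g\in L^\infty\cap\W(\Sp_1)$ and $h\in L^\infty\cap\W(\Sp_2)$ both of bounded support, and then extending via the density of the product algebra $\mathcal{A}$ guaranteed by Assumption \ref{ass}. The entire argument rests on an algebraic simplification of the pairing $\d F(\Phi_1(\hat X))$ on such product $F$.

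For that simplification, I apply the Leibniz rule together with Theorem \ref{dectang} (specifically \eqref{PhiinW}) to write $\d F = h\circ\pi_2\,\Phi_1(\widehat{\d g}) + g\circ\pi_1\,\Phi_2(\widehat{\d h})$. Pairing with the vector field $\Phi_1(\hat X)$, the second summand vanishes by the orthogonality \eqref{eq:orto} (after passing through $\flat$), while the first reduces via \eqref{eq:2phi1} to $h\circ\pi_2 \cdot \d g(X)\circ\pi_1$. Integrating against $\mm_1\times\mm_2$ and applying Fubini gives
\[
\int \d F(\Phi_1(\hat X))\,\d(\mm_1\times\mm_2) \,=\, \Big(\int h\,\d\mm_2\Big)\Big(\int \d g(X)\,\d\mm_1\Big).
\]

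If $X\in D(\Div_\loc,\Sp_1)$, the defining identity for $\Div X$ turns the right-hand side into $-\int F\cdot(\Div(X)\circ\pi_1)\,\d(\mm_1\times\mm_2)$, yielding the sought integration by parts for all $F\in\mathcal{A}$ with bounded support. To extend to general $F\in\W(\Sp_1\times\Sp_2)$ of bounded support I fix a cutoff $\eta\in\mathcal{A}$ with $\eta\equiv 1$ on $\supp F$ (built from products of Lipschitz bumps on each factor) and approximate $F$ by $F_n\in\mathcal{A}$ in the $\W$-norm; since $\mathcal{A}$ is an algebra, each $\eta F_n$ still belongs to $\mathcal{A}$ and the sequence has uniformly bounded support, so the identity persists in the limit. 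Noting that $|\Phi_1(\hat X)|=|X|\circ\pi_1$ and $\Div(X)\circ\pi_1$ both lie in $L^2_\loc(\Sp_1\times\Sp_2)$ by the product structure of bounded sets, this yields $\Phi_1(\hat X)\in D(\Div_\loc,\Sp_1\times\Sp_2)$ with divergence $\Div(X)\circ\pi_1$.

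For the converse, assuming $\Phi_1(\hat X)\in D(\Div_\loc)$ with divergence $H$, testing against $F=g\circ\pi_1\cdot h\circ\pi_2$ and applying Fubini to the displayed identity produces $\int g\,H_h\,\d\mm_1=-(\int h\,\d\mm_2)\int \d g(X)\,\d\mm_1$, where $H_h(x_1):=\int h(x_2)H(x_1,x_2)\,\d\mm_2(x_2)$. Fixing any $h$ with $\int h\,\d\mm_2=1$ and varying $g$ over bounded-support elements of $\W(\Sp_1)$ shows $X\in D(\Div_\loc,\Sp_1)$ with $\Div X=H_h$; then varying $h$ over a countable dense family in $\W(\Sp_2)\cap L^\infty$ of bounded support forces, via the relation $H_h=(\int h\,\d\mm_2)\Div(X)$, the pointwise equality $H(x_1,x_2)=\Div(X)(x_1)$ for $\mm_1\times\mm_2$-a.e.\ $(x_1,x_2)$. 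The main obstacle I anticipate is bookkeeping around the musical isomorphism: the orthogonality \eqref{eq:orto} is stated on forms, so one must transfer it to vectors via $\flat,\sharp$ and verify that $\Phi_1$ on vector fields still preserves pointwise scalar products (which follows from the same property for forms plus the isometric nature of $\flat$); the cutoff arguments inside $\mathcal{A}$ and the final identification of $H$ are then routine.
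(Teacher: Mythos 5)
Your argument is correct and rests on exactly the same ingredients as the paper's proof: the decomposition of $\d f$ via $\Phi_1\oplus\Phi_2$, the orthogonality \eqref{eq:orto}, the identity \eqref{eq:2phi1}, and Fubini. The only real difference is that you restrict to product test functions and then extend by density of $\mathcal A$ (cutoff, uniformly bounded supports, identification of $H$ via a countable family of $h$'s), whereas the paper applies \eqref{PhiinW} directly to an arbitrary $f\in\W(\Sp_1\times\Sp_2)$ with bounded support, which renders that whole approximation layer unnecessary.
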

\begin{proof} From the very definition of divergence it is readily verified that the thesis is equivalent to
\[
\int \dint f(\Phi_1(\hat X))\,\d(\mea_1\times\mea_2)=\iint \dint f_{\cdot}(X)\,\d\mea_1\,\d\mea_2
\]
for every $f\in\W(\Sp_1\times\Sp_2)$ with bounded support.

For such $f$ we have
\[
\begin{split}
\int \dint f(\Phi_1(\hat X))\,\d(\mea_1\times\mea_2)&\stackrel{\eqref{PhiinW}}=\int \big(\Phi_1 \big( \dint f_{\cdot}\big) + \Phi_2 \big( \dint f^{\cdot} \big)\big)(\Phi_1(\hat X))\,\d(\mea_1\times\mea_2)\\
&\stackrel{\eqref{eq:orto}}=\int \big(\Phi_1 \big( \dint f_{\cdot}\big) \big)(\Phi_1(\hat X))\,\d(\mea_1\times\mea_2)\\
&\stackrel{\eqref{eq:2phi1}}=\iint  \dint f_{\cdot}(X)\,\d \mea_1\,\d \mea_2,
\end{split}
\]
hence the conclusion.
\end{proof}
A related property is the following:

\begin{proposition}\label{divprod}
Let $\mmso$ and $\mmst$ be  infinitesimally Hilbertian spaces such that Assumption \ref{ass} holds. Let $X = \Phi_1(X^1_{\cdot}) + \Phi_2(X^2_{\cdot}) \in \Lint^2(\mathit{T}(\Sp_1 \times \Sp_2))$ be such that:
\begin{itemize}
\item[-] $X^1_{x_2} \in D(\div, \Sp_1)$ for $\mea_2$-a.e. $x_2 \in \Sp_2$ with $\displaystyle \int \ \big|{\div (X^1_{\cdot})}\big|^2  \ \dint (\mea_1 \times \mea_2) < \infty$
\item[-] $X^2_{x_1} \in D(\div, \Sp_2)$ for $\mea_1$-a.e. $x_1 \in \Sp_1$ with $\displaystyle \int \ \big|{\div (X^2_{\cdot})}\big|^2  \ \dint (\mea_1 \times \mea_2) < \infty$.
\end{itemize}
Then $X \in D (\Div)$ and 
\begin{equation}
\label{eq:divprod}
\Div(X) (x_1, x_2) = \div(X^1_{x_2}) (x_1) + \Div(X^2_{x_1}) (x_2)\qquad\mea_1\times\mea_2-a.e.\ (x_1,x_2).
\end{equation}
\end{proposition}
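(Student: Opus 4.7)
The strategy is to verify the integration-by-parts identity defining the divergence directly, using the pointwise decompositions of $df$ and $X$ in conjunction with the orthogonality and pairing formulas developed in the previous results.

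Concretely, let $f \in W^{1,2}(\Sp_1 \times \Sp_2)$ be an arbitrary test function with bounded support and set $h(x_1,x_2):= \div(X^1_{x_2})(x_1)+\div(X^2_{x_1})(x_2)$. The $L^2$-integrability of $h$ is immediate from the two bulleted hypotheses and Minkowski. I would first apply Theorem \ref{dectang} to write $df = \Phi_1(df_\cdot) + \Phi_2(df^\cdot)$, then pair this with $X=\Phi_1(X^1_\cdot) + \Phi_2(X^2_\cdot)$. The orthogonality \eqref{eq:orto} kills the two cross terms, while the pairing identity \eqref{eq:2phi1} (and its analogue for $\Phi_2$) gives
\[
df(X)(x_1,x_2)=df_{x_2}(X^1_{x_2})(x_1)+df^{x_1}(X^2_{x_1})(x_2)\qquad \mea_1\times\mea_2\text{-a.e.}
\]

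Next I would verify that for $\mea_2$-a.e.\ $x_2$ the slice $f_{x_2}$ lies in $W^{1,2}(\Sp_1)$ and has bounded support, and similarly for $f^{x_1}$. Bounded support of the slices is clear from bounded support of $f$ (projections of bounded sets are bounded); membership in $W^{1,2}$ follows from the tensorization of the Cheeger energy (Definition \ref{def:tensch}) which gives $f_{x_2}\in W^{1,2}(\Sp_1)$ for $\mea_2$-a.e.\ $x_2$ with $\iint|df_{x_2}|^2\, d\mea_1\, d\mea_2 < \infty$, and symmetrically for $f^{x_1}$. Hence each slice is an admissible test function for the divergence of $X^1_{x_2}$ on $\Sp_1$, respectively of $X^2_{x_1}$ on $\Sp_2$, and
\[
\int df_{x_2}(X^1_{x_2})\, d\mea_1 = -\int f_{x_2}\,\div(X^1_{x_2})\, d\mea_1,\qquad \int df^{x_1}(X^2_{x_1})\, d\mea_2 = -\int f^{x_1}\,\div(X^2_{x_1})\, d\mea_2.
\]

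Finally, integrating the pointwise formula for $df(X)$ over $\Sp_1\times\Sp_2$ and applying Fubini (justified by the $L^2$ bounds on $|X^i_\cdot|$, $|df|$ and the $L^2$ bounds on $\div(X^i_\cdot)$ together with the bounded support of $f$) yields
\[
\int df(X)\,d(\mea_1\times\mea_2)= -\int f\big(\div(X^1_\cdot)\circ\pi\,\text{-type}\big)\,d(\mea_1\times\mea_2) - \int f\,\div(X^2_\cdot)\,d(\mea_1\times\mea_2) = -\int f\,h\,d(\mea_1\times\mea_2),
\]
which is precisely the integration-by-parts characterization of $\Div(X)=h$. The only mildly delicate point is ensuring that the slice-wise identities can be integrated against $\mea_2$, respectively $\mea_1$, which is a routine Fubini/Tonelli check once one notes that Lemma \ref{le:perprod} guarantees the required measurability of $x_2\mapsto df_{x_2}$ and $x_1\mapsto df^{x_1}$ into the respective $L^0$-modules; no deeper obstacle arises.
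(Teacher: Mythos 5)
Your proposal is correct and follows essentially the same route as the paper: decompose $\d f$ via \eqref{PhiinW}, use the orthogonality \eqref{eq:orto} to eliminate the cross terms, apply the pairing identity \eqref{eq:2phi1} to reduce to slice-wise pairings, and conclude by the slice-wise integration by parts and Fubini. You spell out the final steps (admissibility of the slices $f_{x_2}$, $f^{x_1}$ as test functions and the Fubini justification) somewhat more explicitly than the paper does, but the argument is the same.
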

\begin{proof}
For any $f\in\W(\Sp_1\times\Sp_2)$ with bounded support we have
\[
\begin{split}
\int\dint f(X)\,\dint(\mea_1 \times \mea_2) &\stackrel{\eqref{PhiinW}}=\int  \big(\Phi_1 \big( \dint f_{\cdot}\big) + \Phi_2 \big( \dint f^{\cdot} \big)\big)(\Phi_1( X_\cdot^1)+\Phi_2(  X^2_\cdot))\,\d(\mea_1\times\mea_2)\\
&\stackrel{\eqref{eq:orto}}=\int  \Phi_1 \big( \dint f_{\cdot}\big)\Phi_1( X_\cdot^1) + \Phi_2 \big( \dint f^{\cdot} \big)\Phi_2(  X^2_\cdot)\,\d(\mea_1\times\mea_2)\\
&\stackrel{\eqref{eq:2phi1}}=  \int\Big( \int \d f_{\cdot}(X^1_\cdot)\,\d \mea_1\Big)\d \mea_2+\int\Big(\int\dint f^{\cdot}(X^2_\cdot)\,\d \mea_2\Big)\d \mea_2,
\end{split}
\]
which is the thesis.
\end{proof}
These last two statements produce analogous ones for the Laplacian:
\begin{corollary}\label{cor:lapprod}
Let $\mmso$ and $\mmst$ be  infinitesimally Hilbertian spaces such that Assumption \ref{ass} holds.  Then:
\begin{itemize}
\item[i)] $f \in D(\Delta_\loc, \Sp_1)$ if and only if $f \circ \pi_1 \in D(\Delta_\loc, \Sp_1 \times \Sp_2)$ and in this case 
\[
\Delta(f \circ \pi_1) = (\Delta f) \circ \pi_1.
\]
\item[ii)] Let $f \in \W(\Sp_1 \times \Sp_2)$ be such that
\begin{itemize}
\item for $\mea_1$-a.e. $x_1 \in \Sp_1$, $f^{x_1} \in D(\Delta, \Sp_2)$ with $\int \norma{\Delta f^{x_1}}^2_{\Lint^2(\Sp_2)} \ \dint \mea_1 < \infty$,\\
\item for $\mea_2$-a.e. $x_2 \in \Sp_2$, $f_{x_2} \in D(\Delta, \Sp_1)$ with $\int \norma{\Delta f_{x_2}}^2_{\Lint^2(\Sp_1)} \ \dint \mea_2 < \infty$.
\end{itemize}
Then $f \in D(\Delta, \Sp_1 \times \Sp_2)$ and
\begin{equation}\label{prodlap}
\Delta f (x_1,x_2) = \Delta f_{x_2} (x_1) + \Delta  f^{x_1}(x_2)\qquad\mea_1\times\mea_2-a.e.\ (x_1,x_2).
\end{equation}
\end{itemize} 
\end{corollary}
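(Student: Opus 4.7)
The plan is to reduce both assertions to the divergence-level statements already proved in Propositions \ref{divvf} and \ref{divprod}, using the identity $\Delta f = \Div(\nabla f)$ and the decomposition of $\nabla f$ on the product space coming from Theorem \ref{dectang}. The first preliminary fact I need is that the $L^0$-isomorphisms $\Phi_1, \Phi_2$ commute with the musical isomorphisms in the sense that $\sharp \circ \Phi_i = \Phi_i \circ \sharp$ (where $\sharp$ on the domain is applied fiberwise); this is immediate from the definition $\Phi_i(X_\cdot) := \Phi_i(X_\cdot^\flat)^\sharp$ given in \S\ref{se:other}. The second useful observation is that $f \in W^{1,2}_\loc(\Sp_1)$ if and only if $f\circ\pi_1 \in W^{1,2}_\loc(\Sp_1 \times \Sp_2)$: the $L^2_\loc$ part is Fubini, the Sobolev part is the tensorization of the Cheeger energy specialized to functions of the form $g\circ \pi_1$, which yields $|\dint(f\circ\pi_1)|^2 = |\dint f|^2 \circ \pi_1$.

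For part $(i)$, granted the preceding equivalence I would argue that $\nabla(f\circ\pi_1) = \Phi_1(\widehat{\nabla f})$: indeed by Proposition \ref{prop:defPhi} one has $\dint(f\circ\pi_1) = \Phi_1(\widehat{\dint f})$, and applying $\sharp$ and the commutation above gives the claim. Then Proposition \ref{divvf} applied to $X := \nabla f$ asserts that $\nabla f \in D(\Div_\loc,\Sp_1)$ if and only if $\Phi_1(\widehat{\nabla f}) \in D(\Div_\loc, \Sp_1\times\Sp_2)$, with the divergences related by $\Div(\Phi_1(\widehat{\nabla f})) = (\Div(\nabla f))\circ \pi_1$. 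Translating into Laplacians, this is exactly the content of $(i)$.

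For part $(ii)$, I would start from the decomposition \eqref{PhiinW}, apply $\sharp$ termwise, and use the commutation with $\Phi_1,\Phi_2$ to get
\[
\nabla f = \Phi_1(\nabla f_\cdot) + \Phi_2(\nabla f^\cdot),
\]
where $\nabla f_\cdot \in L^0(\Sp_2, L^0(T\Sp_1))$ is the map $x_2 \mapsto \nabla f_{x_2}$ and similarly for $\nabla f^\cdot$. The hypothesis $f\in W^{1,2}(\Sp_1\times\Sp_2)$ together with the tensorization \eqref{Chpr} gives $\nabla f \in L^2(T(\Sp_1\times\Sp_2))$ as well as the $L^2$ integrability of $|\nabla f_\cdot|, |\nabla f^\cdot|$. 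The fiberwise Laplacian assumptions say that $\nabla f_{x_2} \in D(\Div,\Sp_1)$ with $\Div(\nabla f_{x_2}) = \Delta f_{x_2}$ and analogously for the other slice, and the quantitative hypotheses give precisely the $L^2$ bounds on $\Div(X^i_\cdot)$ demanded by Proposition \ref{divprod}. Applying that proposition yields $\nabla f \in D(\Div,\Sp_1\times\Sp_2)$ with
\[
\Delta f(x_1,x_2) = \Div(\nabla f_{x_2})(x_1) + \Div(\nabla f^{x_1})(x_2) = \Delta f_{x_2}(x_1) + \Delta f^{x_1}(x_2),
\]
which is \eqref{prodlap}.

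I do not foresee any serious obstacle: both parts are essentially unpackings of the divergence-level results combined with the identification $\nabla = \sharp \circ \dint$. The only point requiring a small amount of care is the bookkeeping that $\Phi_i$ commutes with $\sharp$ when the latter is interpreted fiberwise on $L^0(\Sp_j, L^0(T^\ast \Sp_i))$, but this is built into the definition of $\Phi_i$ on vector fields given at the start of \S\ref{se:other}.
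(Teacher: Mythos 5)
Your proposal is correct and follows essentially the same route as the paper: both parts are obtained by writing $\Delta=\div\circ\nabla$, using $\nabla(f\circ\pi_1)=\Phi_1(\widehat{\nabla f})$ (from \eqref{eq:reqphi1}) together with Proposition \ref{divvf} for $(i)$, and the decomposition $\nabla f=\Phi_1(\nabla f_\cdot)+\Phi_2(\nabla f^\cdot)$ (from \eqref{PhiinW}) together with Proposition \ref{divprod} for $(ii)$. The extra bookkeeping you supply (the fiberwise commutation of $\Phi_i$ with $\sharp$ and the $L^2$ verification of the hypotheses of Proposition \ref{divprod}) is exactly what the paper leaves implicit.
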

\begin{proof}
For the first claim simply notice that, directly from the definition, we have $f \in D(\Delta_\loc, \Sp_1)$ if and only if $\nabla f \in D(\Div_{\loc}, \Sp)$ and in this case $\div(\nabla f)=\Delta f$. Similarly for $f\circ\pi_1$. Then observe that \eqref{eq:reqphi1} grants that $\nabla(f\circ\pi_1)=\Phi_1(\widehat{\nabla f})$ and apply Proposition \ref{divvf} above to conclude.

The second claim follows by analogous considerations using Proposition \ref{divprod}  and the identity $\nabla f= \Phi_1(\nabla f_{\cdot}) + \Phi_2(\nabla f^{\cdot})$ (recall \eqref{PhiinW}).
\end{proof}

\subsection{Flow of harmonic vector fields on $\RCD(0,\infty)$ spaces} \label{isox}
In this section we work on a fixed $\RCD(0,\infty)$ space $\mms$ and study the Regular Lagrangian Flow of a fixed non-zero vector field $X\in\Lint^2(T\Sp)$ which is {\bf harmonic}, i.e.\ $X^\flat\in D(\Delta_H)$ with $\Delta_HX^\flat=0$. Recalling  \eqref{eq:hlapdiv} we have that  $\div X=0$, while \eqref{eq:energiah} grants  that $X$ is parallel, i.e.\ $X\in H^{1,2}_C(T\Sp)$ with $\nabla X=0$. This latter property also implies that  $|X|$ is constant (see \cite{Gigli14} for the details about this last claim).

We can thus apply Theorem \ref{thm:AT} to deduce that there exists and is unique the Regular Lagrangian Flow $(\Fl^{(X)}_t)$ of $X$. Aim of this section is to prove that:
\begin{itemize}
\item[i)] the $\Fl^X_t$'s are measure preserving isometries
\item[ii)] if $Y$ is another harmonic vector field, then $\Fl^X_t\circ \Fl^Y_s=\Fl^{tX+sY}_1$ for any $t,s$. 
\end{itemize}
Notice that by analogy with the smooth case,  one would expect to need only the conditions $\div X=0$, $\nabla X=0$ and that $\Sp$ is a $\RCD(K,\infty)$ space to get the above. Yet, it is unclear to us whether these are really sufficient, (part of) the problem being in the approximation procedure used in Proposition \ref{prop:euler} which requires our stronger assumptions.

\bigskip

In what comes next  we shall occasionally use the following simple fact: for $T,S\colon\Sp\to\Sp$ Borel we have
\begin{equation}
\label{eq:mappeuguale}
T_*\mu=S_*\mu\quad\text{$\forall\mu\in\prob\Sp$ with bounded support and density}\qquad\Rightarrow\qquad T=S\quad\mm-a.e..
\end{equation}
Indeed, if $T\neq S$ on a set of positive measure,  for some $r>0$ we would have $\sfd(T(x),S(x))>2r$ for a set of $x$'s of positive measure and thus using the separability of $\Sp$ we would be able to find $\bar x$ such that $T_*\mm(B_r(\bar x))>0$. Thus  $\mm(T^{-1}(B_r(\bar x)))>0$ and letting $A\subset T^{-1}(B_r(\bar x))$ be any bounded Borel subset of positive $\mm$-measure, for $\mu:=\mm(A)^{-1}\mm\restr A$ we would have that $T_*\mu$ and $S_*\mu$ are concentrated on disjoint sets, and thus in particular $T_*\mu\neq S_*\mu$.

With this said, we prove the following result, which shows that the flows of  $X$ and $-X$ are one the inverse of the other:
\begin{lemma}\label{invFl}
Let $(\X,\sfd,\mm)$ be a $\RCD(0,\infty)$ space and $X$ a harmonic vector field.  Then for every $t \geq 0$ the following identities hold $\mm$-a.e.:
\[
\Fl_{t}^{(-X)}\circ \Fl_{t}^{(X)}=\Id
 \qquad \text{and} \qquad 
\Fl_t^{(X)}\circ \Fl_{ t}^{(-X)}=\Id.
\]
\end{lemma}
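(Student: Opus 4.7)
The plan is to use the uniqueness of solutions to the continuity equation (Theorem \ref{thm:unice}) together with the reversal $s \mapsto t-s$.

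First I would collect the basic regularity facts about the harmonic vector field $X$ and its negative $-X$: both are parallel with vanishing divergence, and $|X|$ is constant, hence both $X$ and $-X$ satisfy the hypotheses of Theorem \ref{thm:AT} (in particular \eqref{eq:linftyspeed} and \eqref{eq:iprlf}), so their Regular Lagrangian Flows exist and are unique. Moreover, since $\div X=0=\div(-X)$, the quantitative bound \eqref{eq:quantbound} reduces to $(\Fl^{(X)}_s)_*\mm\leq \mm$ and $(\Fl^{(-X)}_s)_*\mm\leq \mm$ for every $s\geq 0$.

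Next, fix an arbitrary $\bar\mu\in\prob\Sp$ with bounded support and with density in $L^\infty(\mm)$, and put $\mu_u:=(\Fl^{(X)}_u)_*\bar\mu$ for $u\in[0,t]$. By Theorem \ref{thm:unice} (applied to $X$), $(\mu_u,X)$ solves the continuity equation in the sense of Definition \ref{def:solce}. Now define the time-reversed curve
\[
\tilde\mu_s:=\mu_{t-s}\qquad s\in[0,t].
\]
A direct check shows that $(\tilde\mu_s,-X)$ solves the continuity equation: condition $(i)$ holds because $\mu_u\leq\|\bar\rho\|_\infty\mm$, condition $(ii)$ holds because $|X|$ is constant and the $\tilde\mu_s$ are probability measures, and condition $(iii)$ follows from
\[
\frac{\d}{\d s}\int f\,\d\tilde\mu_s=-\frac{\d}{\d u}\int f\,\d\mu_u\Big|_{u=t-s}=-\int\d f(X)\,\d\mu_{t-s}=\int\d f(-X)\,\d\tilde\mu_s
\]
for every $f\in\W(\Sp)$. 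By Theorem \ref{thm:unice} applied to $-X$ with initial datum $\tilde\mu_0=\mu_t=(\Fl^{(X)}_t)_*\bar\mu$, we deduce
\[
\tilde\mu_s=(\Fl^{(-X)}_s)_*(\Fl^{(X)}_t)_*\bar\mu=\big(\Fl^{(-X)}_s\circ\Fl^{(X)}_t\big)_*\bar\mu\qquad\forall s\in[0,t].
\]
Evaluating at $s=t$ gives $\bar\mu=\tilde\mu_t=(\Fl^{(-X)}_t\circ\Fl^{(X)}_t)_*\bar\mu$. Since $\bar\mu$ was an arbitrary probability measure with bounded support and bounded density, property \eqref{eq:mappeuguale} yields $\Fl^{(-X)}_t\circ\Fl^{(X)}_t=\Id$ $\mm$-a.e.

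The second identity is obtained by the identical argument with the roles of $X$ and $-X$ exchanged (both are harmonic, so both satisfy the hypotheses of Theorem \ref{thm:AT}). No serious obstacle is expected; the only care needed is in verifying that the reversed curve $\tilde\mu_s$ genuinely falls within the scope of Definition \ref{def:solce} so that the uniqueness statement of Theorem \ref{thm:unice} can be invoked, and in setting up \eqref{eq:mappeuguale} with sufficiently many test measures to conclude equality $\mm$-a.e.
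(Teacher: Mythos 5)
Your proposal is correct and follows essentially the same route as the paper: the paper also considers the time-reversed curve (written there as $\mu_s:=(\Fl^{(X)}_{1-s})_*\mu$), observes that it and $(\Fl^{(-X)}_s)_*(\Fl^{(X)}_1)_*\mu$ both solve the continuity equation for $-X$ with the same initial datum, invokes the uniqueness of Theorem \ref{thm:unice}, and concludes via \eqref{eq:mappeuguale}. Your explicit verification of the time-reversal step and of conditions $(i)$--$(iii)$ of Definition \ref{def:solce} is exactly the check the paper leaves implicit.
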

\begin{proof} We shall prove the first identity for $t=1$, as then the rest  follows by similar arguments. Let  $\mu\in\proba(\Sp)$ be with bounded support and density, and consider the curves $[0,1]\ni t\mapsto \mu_t,\tilde\mu_t\in \proba(\Sp)$ defined as
\[
\begin{split}
\mu_t:=(\Fl^{(X)}_{1-t})_{\ast} \mu \qquad\text{ and }\qquad  \tilde\mu_t:=(\Fl^{(- X)}_{t})_{\ast} (\Fl^{(X)}_{1})_{\ast}\mu,
\end{split}
\] 
notice that $\mu_0=\tilde\mu_0$ and that  they both solve the continuity equation  \eqref{eq:ce} for $X_t = - X$ in the sense of Definition \ref{def:solce}.  By  Theorem \ref{thm:unice} we conclude that $\mu_1=\tilde\mu_1$, i.e.
\[
\mu=(\Fl_{1}^{(-X)}\circ \Fl_{1}^{(X)})_*\mu.
\]
The conclusion follows by the arbitrariness of $\mu$ and \eqref{eq:mappeuguale}.
\end{proof}
From this proposition and the semigroup property \eqref{eq:semigrpr} of Regular Lagrangian Flows, it follows that defining $\Fl^{(X)}_{-t}:=\Fl^{(-X)}_t$ for $t\geq 0$ we have
\begin{equation}\label{grpr}
\Fl_t^{(X)}\circ \Fl_{s}^{(X)} = \Fl^{(X)}_{t+s}  \qquad\mm \text{-a.e.}\qquad \forall \  t, s \in\R.
\end{equation}

\begin{proposition}[Preservation of the measure] \label{MP} Let $(\X,\sfd,\mm)$ be a $\RCD(0,\infty)$ space and $X$ a harmonic vector field.  
Then for every $t\in \real$ we have 
\begin{equation}
\label{eq:mespre}
(\Fl^{(X)}_t)_*\mm=\mm.
\end{equation}
\end{proposition}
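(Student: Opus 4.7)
The plan is to use the quantitative upper bound of Ambrosio--Trevisan (Theorem \ref{thm:AT}, equation \eqref{eq:quantbound}) together with the fact that the flows of $X$ and $-X$ are mutually inverse (Lemma \ref{invFl}). The point is that a harmonic vector field has vanishing divergence, so the one-sided bound in \eqref{eq:quantbound} gives $(\Fl^{(X)}_t)_*\mm\leq \mm$; applying the same reasoning to $-X$ (which is again harmonic) and combining with $\Fl^{(X)}_t\circ\Fl^{(-X)}_t=\Id$ forces equality.

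In detail: since $X$ is harmonic we have $X^\flat\in D(\Delta_H)$ with $\Delta_H X^\flat=0$, so by \eqref{eq:hlapdiv} it holds $\div X=0$; in particular $(\div X)^-\equiv 0$ in $L^\infty(\X)$. Applying \eqref{eq:quantbound} to the time-independent field $X$ therefore yields
\[
(\Fl^{(X)}_t)_*\mm\leq \mm\qquad \forall t\geq 0,
\]
and exactly the same reasoning applied to the (also harmonic) vector field $-X$ gives $(\Fl^{(-X)}_t)_*\mm\leq \mm$ for every $t\geq 0$.

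Now fix $t\geq 0$ and use the identity $\Fl^{(X)}_t\circ \Fl^{(-X)}_t=\Id$ $\mm$-a.e.\ proved in Lemma \ref{invFl}. Pushing forward and exploiting the elementary monotonicity of the pushforward operator (if $\mu\leq\nu$ then $T_*\mu\leq T_*\nu$ for any Borel $T$), we obtain the chain
\[
\mm=\Id_*\mm=(\Fl^{(X)}_t)_*(\Fl^{(-X)}_t)_*\mm\leq (\Fl^{(X)}_t)_*\mm\leq \mm,
\]
where the last inequality is the one established in the previous step. Hence all inequalities are equalities and $(\Fl^{(X)}_t)_*\mm=\mm$ for all $t\geq 0$. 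The case $t<0$ is obtained from the convention $\Fl^{(X)}_{-t}=\Fl^{(-X)}_t$ (introduced just above \eqref{grpr}) applied to the identical argument with $X$ replaced by $-X$.

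There is no real obstacle: the proof is essentially a one-line consequence of $\div X=0$ together with the a.e.-invertibility statement of Lemma \ref{invFl}. The only (mild) point of care is that the monotonicity of the pushforward and the chain of inequalities above must be applied as genuine inequalities of Borel measures (not merely of probability measures), which is justified because $\mm$ is $\sigma$-finite and bounded Borel sets have finite measure, so all the quantities involved are well-defined on bounded Borel sets and the comparison passes to general Borel sets by monotone approximation.
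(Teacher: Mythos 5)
Your proof is correct and is essentially identical to the paper's own argument: both deduce $(\Fl^{(X)}_t)_*\mm\leq\mm$ from $\div X=\div(-X)=0$ via the quantitative bound \eqref{eq:quantbound}, and then force equality through the chain $\mm=(\Fl^{(X)}_t)_*(\Fl^{(-X)}_t)_*\mm\leq(\Fl^{(X)}_t)_*\mm\leq\mm$ using Lemma \ref{invFl}. No gaps.
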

\begin{proof} Simply notice that from $\div(X)=\div(-X)=0$ and \eqref{eq:quantbound}, for any $t\geq 0$ we have
\[
\mm=(\Fl^{(X)}_t\circ\Fl^{(X)}_{-t})_*\mm=(\Fl^{(X)}_t)_*(\Fl^{(-X)}_{t})_*\mm\leq (\Fl^{(X)}_t)_* \mm\leq\mm 
\]
forcing the inequalities to be equalities.
\end{proof}
Recall that the heat flow $(\h_t)$ on $\X$ is firstly defined as the $L^2$-gradient flow of the Dirichlet and then extended to a flow on $L^1+L^\infty$ by monotonicity and continuity. On the other hand, the `Hodge' heat flow $(\h_{H,t})$ is defined on $L^2(T^*\X)$ as the gradient flow of the functional
\[
L^2(T^*\X)\ni\omega\quad\mapsto\quad \left\{\begin{array}{ll}
\displaystyle{\frac12\int |\d\omega|^2+|\delta\omega|^2\,\d\mm}&\qquad\text{ if }\omega\in H^{1,2}_H(T^*\X),\\
+\infty&\qquad\text{ otherwise.}
\end{array}\right.
\]
For us it will be relevant to know the relation
\begin{equation}
\label{eq:relcal}
\h_{H,t}\d f=\d \h_tf\qquad\forall f\in W^{1,2}(\X)
\end{equation}
and the improved Bakry-\'Emery estimate:
\begin{equation}
\label{eq:beimp}
|\h_{H,t}\omega|^2\leq e^{-Kt} \h_t(|\omega|^2),\quad\mm-a.e.\qquad \ \forall t\geq 0,\ \omega\in L^2(T^*\Sp),
\end{equation}
valid on $\RCD(K,\infty)$ spaces. See \cite{Gigli14} for further details about these.

With this said, we can now prove the following lemma, which is key to show that $\Fl^{(X)}_t$ is an isometry.
\begin{proposition}[Euler's equation for $X$]\label{prop:euler} Let $(\X,\sfd,\mm)$ be a $\RCD(0,\infty)$ space and $X$ a harmonic vector field. Then for any $f \in \W(\Sp)$ it holds
\begin{equation} \label{Euler}
\h_{t}\left( \big\langle \nabla f, X \big\rangle \right) = \big\langle  \nabla \h_{t} f, X \big\rangle,  \quad \mae, \forall t \ge 0.
\end{equation}
Moreover, for every $f \in D(\Delta)$ with $\Delta f \in \W(\Sp)$, we have $\langle \nabla f, X \rangle \in D(\Delta)$ and
\begin{equation} \label{Nabla}
\Delta \big\langle \nabla f, X \big\rangle = \big\langle  \nabla\Delta f, X \big\rangle, \quad \mae.
\end{equation}
\end{proposition}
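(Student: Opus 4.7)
The plan is to first establish \eqref{Euler} via the improved Bakry--\'Emery estimate \eqref{eq:beimp} (applied with $K=0$) to the linear perturbation $\omega_\lambda:=\d f+\lambda X^\flat$, $\lambda\in\R$, and then to deduce \eqref{Nabla} by differentiating \eqref{Euler} at $t=0^+$ in $L^2$.

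For the first step, since $\Delta_H X^\flat=0$ the form $X^\flat$ is a fixed point of the Hodge heat flow, while \eqref{eq:relcal} gives $\h_{H,t}\d f=\d \h_t f$; by linearity we therefore have $\h_{H,t}\omega_\lambda=\d\h_t f+\lambda X^\flat$. Moreover, since $X$ is parallel its pointwise norm is constant, so $\h_t(|X|^2)=|X|^2$, and $X\in L^\infty(T\Sp)$. Expanding the squares in \eqref{eq:beimp} applied to $\omega_\lambda$ (with $K=0$), the $\lambda^2|X|^2$-terms cancel on both sides and one is left with
\[
\bigl(|\d\h_t f|^2-\h_t(|\d f|^2)\bigr)(x)\ \le\ 2\lambda\,\bigl(\h_t(\la\nabla f,X\ra)-\la\nabla\h_t f,X\ra\bigr)(x)\quad\text{for }\mm-a.e.\ x,
\]
simultaneously for every $\lambda\in\mathbb Q$ (the exceptional null set being the countable union over $\lambda\in\mathbb Q$). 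For $\mm$-a.e.\ $x$ the left-hand side is a finite number independent of $\lambda$, while the right-hand side is linear in $\lambda$: the inequality can persist as $\lambda$ ranges over $\R$ only if the slope vanishes, and this is precisely \eqref{Euler}.

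For the second step, assume $f\in D(\Delta)$ with $\Delta f\in\W(\Sp)$. From the integral identity $\h_t f-f=\int_0^t\h_s(\Delta f)\,\d s$, together with the strong continuity of $(\h_s)$ on $\W(\Sp)$ (which follows from energy decay of $|\D\h_s g|_{L^2}$ and lower semicontinuity, applied to $g=\Delta f\in\W(\Sp)$), we obtain $(\h_t f-f)/t\to\Delta f$ in $\W(\Sp)$ as $t\downarrow 0$, and therefore $(\nabla\h_t f-\nabla f)/t\to\nabla\Delta f$ in $L^2(T\Sp)$. Since $X\in L^\infty(T\Sp)$, pairing with $X$ is a bounded map $L^2(T\Sp)\to L^2(\Sp)$, giving
\[
\frac{\la\nabla\h_t f,X\ra-\la\nabla f,X\ra}{t}\ \xrightarrow{L^2(\Sp)}\ \la\nabla\Delta f,X\ra\qquad\text{as }t\downarrow 0.
\]
By \eqref{Euler} the left-hand side equals $\bigl(\h_t(\la\nabla f,X\ra)-\la\nabla f,X\ra\bigr)/t$, and the existence of its $L^2$-limit is exactly the statement, via the characterization of $\Delta$ as the infinitesimal generator of $(\h_t)$ on $L^2(\Sp)$, that $\la\nabla f,X\ra\in D(\Delta)$ with Laplacian $\la\nabla\Delta f,X\ra$.

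The main obstacle is the first step, and more precisely the use of the hypothesis $K=0$: only then do the $\lambda^2|X|^2$ contributions match exactly in the Bakry--\'Emery inequality, leaving a genuinely linear-in-$\lambda$ expression whose sign-free control forces the equality \eqref{Euler}. A general $\RCD(K,\infty)$ bound with $K\neq 0$ would produce a residual $\lambda^2(1-e^{-Kt})|X|^2$ term which cannot be dominated by a linear function of $\lambda$, and this is the point at which the curvature assumption $K=0$ is genuinely used.
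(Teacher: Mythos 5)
Your proof is correct and follows essentially the same route as the paper: the first identity is obtained by applying the improved Bakry--\'Emery estimate to the linear combination $\d f+\lambda X^\flat$ (the paper writes it as $X^\flat+\eps\,\d f$, which is the same perturbation) and exploiting that $X^\flat$ is a fixed point of the Hodge heat flow with constant pointwise norm, after which varying the parameter isolates the cross term; the second identity is then obtained by differentiating \eqref{Euler} at $t=0$. Your write-up is in fact slightly more careful than the paper's on two points the paper leaves implicit --- the common null set over a countable dense set of parameters, and the strong $\W$-continuity of the heat flow needed to justify $(\h_tf-f)/t\to\Delta f$ in $\W(\Sp)$ --- so no changes are needed.
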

\begin{proof} We apply \eqref{eq:beimp} in our space to the form $X^\flat+\eps \d f$ to obtain
\begin{equation}
\label{eq:stap1}
\abs{\h_{H, t}(X^{\flat} + \varepsilon \, \dint f)}^2 \le \h_{t} (\abs{X + \varepsilon \nabla f }^2).  
\end{equation}
We have already observed that the  fact that $X^\flat$ is  harmonic  grants that $|X|$ is constant, say $|X|\equiv c$. The harmonicity also grants  that $\h_{H, t}(X^\flat) = X^\flat$ for every $t \ge 0$, hence we have $\abs{\h_{H, t}(X^\flat)}^2 \equiv c^2\equiv\h_t( \abs{X}^2)$ for any $t\geq 0$. Therefore,
\[  c^2 + 2 \varepsilon \langle X, \h_{H, t}(\dint f) \rangle + \varepsilon^2 \abs{\h_{H, t}(\dint f)}^2 \le c^2 + 2 \varepsilon \h_t \langle X, \nabla f \rangle + \varepsilon^2 \h_t(\abs{\dint f}^2) \]
and the arbitrariness of $\eps\in\R$ implies
\[  
\langle X, \h_{H, t} \dint f \rangle = 2 \h_{t} \langle X, \nabla f \rangle,
\]
which by  \eqref{eq:relcal} is \eqref{Euler}. Then \eqref{Nabla} comes by differentiating \eqref{Euler} at $t=0$.
\end{proof}

\begin{proposition}[Preservation of the Dirichlet energy]\label{DE}
 Let $(\X,\sfd,\mm)$ be a $\RCD(0,\infty)$ space and $X$ a harmonic vector field. Then for every $t\in\R $ we have
 \begin{equation}
\label{eq:sameener}
 \ener(f \circ \Fl^X_t) = \ener(f)\qquad\forall f\in W^{1,2}(\X).
\end{equation}
\end{proposition}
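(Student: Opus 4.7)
The plan is to establish the stronger commutation identity
\begin{equation}\label{plan:commut}
\h_s(f\circ\Fl^X_t)=(\h_s f)\circ\Fl^X_t \quad \mae, \qquad \forall\, f\in L^2(\Sp),\ s\geq 0,\ t\in\R,
\end{equation}
and to derive \eqref{eq:sameener} by combining \eqref{plan:commut} with the measure preservation of Proposition \ref{MP}.

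Granting \eqref{plan:commut}, the preservation of the energy is immediate from the heat--flow characterization $4\ener(g)=\lim_{s\downarrow 0}s^{-1}(\|g\|_{L^2}^2-\|\h_s g\|_{L^2}^2)$, valid for every $g\in L^2(\Sp)$ (with the limit taking value $+\infty$ precisely when $g\notin\W(\Sp)$; this is a direct consequence of $\tfrac{d}{ds}\|\h_s g\|^2=-2\|\nabla\h_s g\|^2$ and the monotonicity of $r\mapsto\ener(\h_r g)$). By Proposition \ref{MP} one has $\|f\circ\Fl^X_t\|_{L^2}=\|f\|_{L^2}$, and by \eqref{plan:commut} together with the same measure preservation $\|\h_s(f\circ\Fl^X_t)\|_{L^2}=\|(\h_s f)\circ\Fl^X_t\|_{L^2}=\|\h_s f\|_{L^2}$, so the defining limits for $\ener(f\circ\Fl^X_t)$ and $\ener(f)$ agree term by term.

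To prove \eqref{plan:commut}, set $\Psi_f(t,s):=\h_s(f\circ\Fl^X_t)-(\h_s f)\circ\Fl^X_t$ and abbreviate $D_X f:=\la\nabla f,X\ra$. For $f\in\W(\Sp)$, Proposition \ref{prop:rlfeq}(b) together with the $L^2$-boundedness of $\h_s$ yields $\partial_t[\h_s(f\circ\Fl^X_t)]=\h_s(D_X f\circ\Fl^X_t)$, while Proposition \ref{prop:rlfeq}(b) applied to $\h_s f\in\W(\Sp)$, combined with the Euler identity \eqref{Euler}, gives $\partial_t[(\h_s f)\circ\Fl^X_t]=D_X(\h_s f)\circ\Fl^X_t=\h_s(D_X f)\circ\Fl^X_t$. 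Subtracting, one obtains the Duhamel-type identity
\[
\partial_t\Psi_f(t,s)=\Psi_{D_X f}(t,s),\qquad \Psi_f(0,s)=0,
\]
and iterating $k$ times (which is legitimate as long as $D_X^{j}f\in\W(\Sp)$ for $j<k$) together with the trivial contractive bound $\|\Psi_g(\cdot,s)\|_{L^2}\leq 2\|g\|_{L^2}$ produces
\[
\|\Psi_f(t,s)\|_{L^2}\leq \tfrac{2|t|^k}{k!}\,\|D_X^k f\|_{L^2}.
\]
Specializing to $f=\h_{s_0}\tilde f$ with $\tilde f\in L^2(\Sp)$ and $s_0>0$ (a dense class in $L^2(\Sp)$), the factorization $D_X\h_r=\h_{r/2}D_X\h_{r/2}$ (a direct consequence of \eqref{Euler}) applied iteratively, together with the ultracontractive estimate $\|D_X\h_r g\|_{L^2}\leq c\|g\|_{L^2}/\sqrt{2r}$ where $c=|X|$ (constant because $X$ is parallel), supplies a bound of the form $\|D_X^k\h_{s_0}\tilde f\|_{L^2}\leq A^k(k/s_0)^{k/2}\|\tilde f\|_{L^2}$. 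By Stirling's formula the right-hand side of the previous display vanishes as $k\to\infty$, so $\Psi_f\equiv 0$ on the dense subset; continuity in $f\in L^2(\Sp)$ (ensured by measure preservation and the contractivity of $\h_s$) then extends \eqref{plan:commut} to every $f\in L^2(\Sp)$.

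The hardest point is the quantitative estimate on $\|D_X^k f\|_{L^2}$ needed to defeat the factorial in the denominator: one must carefully chain the Euler commutation \eqref{Euler} with the heat-flow regularization, rather than appeal to an abstract functional calculus for $\Delta$ and $D_X$ which would require a level of spectral theory not available in the non-smooth $\RCD$ setting. This bookkeeping is the technical crux; everything else is a fairly mechanical consequence of Propositions \ref{prop:rlfeq}, \ref{MP}, and \ref{prop:euler}.
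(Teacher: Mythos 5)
Your argument is correct, but it follows a genuinely different route from the paper's. The paper never commutes the flow with the heat semigroup: it fixes $\eps>0$, uses Proposition \ref{prop:rlfeq} to see that $t\mapsto \h_\eps(f\circ\Fl^X_t)\in\W(\X)$ is Lipschitz, computes
\[
\frac{\d}{\d t}\,\ener\big(\h_\eps (f\circ\Fl^X_t)\big)=-\int\big\la\nabla\Delta\h_{2\eps}(f\circ\Fl^X_t),X\big\ra\,(f\circ\Fl^X_t)\,\d\mm
\]
with the help of the measure preservation of Proposition \ref{MP}, and then kills this quantity by an antisymmetry argument (identity \eqref{PrScZero}) that combines \eqref{Euler}, \eqref{Nabla} and $\div X=0$; letting $\eps\downarrow0$ gives \eqref{eq:sameener}. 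You instead prove the stronger statement that $\h_s$ commutes with composition with $\Fl^X_t$ on all of $L^2(\X)$, which yields the energy identity for free via the heat-flow characterization of $\ener$ and is conceptually cleaner, at the price of the analytic-vector estimate needed to beat the factorial. That estimate is achievable, but one detail in your write-up must be repaired: iterating the halving identity $D_X\h_r=\h_{r/2}D_X\h_{r/2}$ literally produces time steps $s_0 2^{-j}$ and hence a constant of order $2^{k^2/4}$, which is \emph{not} defeated by $k!$. To obtain the bound $A^k(k/s_0)^{k/2}$ you should instead split $\h_{s_0}=(\h_{s_0/k})^{k}$ and use \eqref{Euler} to push each $D_X$ past the unused factors, so that $D_X^k\h_{s_0}\tilde f=(D_X\h_{s_0/k})^{k}\tilde f$ with each factor of operator norm at most $c\sqrt{k/(2s_0)}$ (this same splitting also guarantees $D_X^j\h_{s_0}\tilde f\in\W(\X)$ for $j<k$, legitimizing the Duhamel iteration); with this correction Stirling's formula closes the argument exactly as you claim. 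Finally, note that the full commutation you establish is, a posteriori, also a consequence of Theorem \ref{thm:isofl}, since a measure-preserving isometry commutes with the heat flow; the paper's route avoids proving it in advance.
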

\begin{proof} Fix $f\in W^{1,2}(\X)$, put $f_t:=f\circ\Fl^X_t$ and notice that since $\ener(\h_\eps g)\to\ener (g)$ as $\eps\downarrow0$ for any $g\in\Lint^2(\Sp)$, it is sufficient to prove that for any $\eps>0$ we have
\[
\ener(\h_\eps f_t)=\ener(\h_\eps f)\qquad\forall t\in\R.
\]
Thus fix $\eps>0$ and notice that Proposition \ref{prop:rlfeq} grants that $t\mapsto f_t\in L^2(\Sp)$ is Lipschitz. This in conjunction with the fact that $\h_\eps: L^2(\Sp)\to \W(\Sp)$ is continuous  ensures that $ t \mapsto  \h_{\varepsilon}f_t \in \W(\Sp)$ is Lipschitz.

We now compute the derivative of  the Lipschitz map $  t \mapsto \ener(\h_\eps f_t)$ and start noticing that
\[ 
\displaystyle \int \abs{\nabla \h_{\varepsilon}f_{t+h}}^2 - \abs{\nabla \h_{\varepsilon}f_t}^2 \dint\mea = \int \abs{\nabla\h_{\varepsilon}(f_{t+h} - f_t)}^2 + 2 \big\langle\nabla \h_{\varepsilon}f_t, \nabla \h_{\varepsilon}(f_{t+h} - f_t)\big\rangle \dint\mea,
\]
so that the Lipschitz regularity of  $ t \mapsto  \h_{\varepsilon}f_t \in \W(\Sp)$ grants that for any $t\in\R$ it holds
\[
\lim_{h \rightarrow 0} \displaystyle \int{\dfrac{\abs{\nabla \h_{\varepsilon}f_{t+h}}^2 - \abs{\nabla \h_{\varepsilon}f_t}^2}{2h}} \dint\mea = \lim_{h \rightarrow 0} \int
\Big\langle \nabla \h_{\varepsilon}f_t, \nabla \dfrac{\h_{\varepsilon}f_{t+h} - \h_{\varepsilon}f_t}{h} \Big\rangle \dint\mea.
\]
Hence
\[
\begin{split}
\frac{\d}{\d t}\ener(\h_\eps f_t)&= -\lim_{h \rightarrow 0} \displaystyle \int \Delta \h_{\varepsilon}f_t \  \dfrac{\h_{\varepsilon}f_{t+h} - \h_{\varepsilon} f_t}{h} \dint\mea \notag\\
&= -\lim_{h \rightarrow 0} \displaystyle \int \Delta \h_{2\varepsilon}f_t \ \dfrac{f_{t} \circ \Fl^{(X)}_h - f_t}{h} \dint\mea \notag\\
\text{by \eqref{eq:mespre}}\qquad\qquad&= -\lim_{h \rightarrow 0} \displaystyle \int \dfrac{\big(\Delta \h_{2\varepsilon}f_t\big) \circ \Fl^{(X)}_{-h} - \Delta \h_{2\varepsilon}(f_t)}{h} f_t \dint\mea \notag\\
\text{by the last claim in Proposition \ref{prop:rlfeq}}\qquad\qquad&{=} -\displaystyle \int  \langle\nabla \Delta \h_{2\varepsilon} f_t,X\rangle \, f_t \dint\mea .
\end{split}
\]
To conclude it is therefore sufficient to prove that for any $g\in \Lint^2(\Sp)$ it holds
\begin{equation} \label{PrScZero} 
 \int  \langle\nabla \Delta \h_{2\varepsilon} g,X\rangle \, g \,\dint\mea =0
 \end{equation}
Hence fix $g\in \Lint^2(\Sp)$ and notice that
\begin{equation} \label{prima}
\begin{split}
 \int  \langle\nabla \Delta \h_{2\varepsilon} g,X\rangle \, g \,\dint\mea\stackrel{\eqref{Euler}}= \int \h_{\varepsilon} \langle\nabla \Delta \h_{\varepsilon}g,X\rangle  g \,\dint\mea =\int  \langle\nabla \Delta \h_{\varepsilon}g,X\rangle  \h_{\varepsilon} g\, \dint\mea 
% \stackrel{\eqref{Nabla}}{=}  \int \Delta \left(  \dint (\h_{\varepsilon} g)(X) \right) \h_{\varepsilon} g \dint\mea\\
%&= \int \left( \dint (\h_{\varepsilon} g)(X) \right) \Delta \h_{\varepsilon} g \dint\mea.
\end{split}
\end{equation}
and, recalling that $\div X=0$, that
\[
\int  \langle\nabla \Delta \h_{\varepsilon}g,X\rangle \, \h_{\varepsilon} g \,\dint\mea =-\int  \Delta \h_{\varepsilon}g\,\langle X, \nabla \h_{\varepsilon} g \rangle\,\dint\mea \stackrel{\eqref{Nabla}}=-\int\h_{\varepsilon} g\,  \langle X, \nabla\Delta \h_{\varepsilon}g\rangle  \,\dint\mea.
\]
This proves \eqref{PrScZero} and the theorem.
\end{proof}

We therefore can conclude that:
\begin{theorem}\label{thm:isofl} Let $(\X,\sfd,\mm)$ be a $\RCD(0,\infty)$ space and $X$ a harmonic vector field.   Then for every $t\in\R$ the map $\Fl^{(X)}_t$ has a continuous representative and this representative is  a measure preserving isometry.
\end{theorem}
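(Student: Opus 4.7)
The plan is to apply the rigidity theorem \ref{thm:isorcd} with $\Sp_1 = \Sp_2 = \Sp$, taking $T := \Fl^{(X)}_t$ and $S := \Fl^{(X)}_{-t}$. Most of the hypotheses of that theorem have already been secured in the preceding propositions: Lemma \ref{invFl} together with the group property \eqref{grpr} yields the mutual-inverse relations $T \circ S = \Id = S \circ T$ holding $\mm$-a.e., Proposition \ref{MP} provides $T_*\mm = \mm$, and Proposition \ref{DE} gives $\ener(f \circ T) = \ener(f)$ for every $f \in W^{1,2}(\Sp)$.

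The only point I would need to address is the extension of the energy identity to all $f \in L^2(\Sp)$, as required by Theorem \ref{thm:isorcd}. For $f \in L^2(\Sp) \setminus W^{1,2}(\Sp)$ one has $\ener(f) = +\infty$, and I would argue by contradiction that $\ener(f \circ T) = +\infty$ as well: if instead $f \circ T \in W^{1,2}(\Sp)$, then applying Proposition \ref{DE} to the harmonic vector field $-X$ (whose time-$t$ flow is precisely $S$) with test function $f \circ T$ would yield $\ener((f \circ T) \circ S) = \ener(f \circ T) < +\infty$; but $(f \circ T) \circ S = f$ $\mm$-a.e.\ by Lemma \ref{invFl}, contradicting $f \notin W^{1,2}(\Sp)$.

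Having verified the three hypotheses of Theorem \ref{thm:isorcd}, I would conclude that $\Fl^{(X)}_t$ coincides, up to modification on an $\mm$-negligible set, with an isometry $\tilde T$ of $\Sp$; such an isometric modification is automatically continuous, and the measure preservation property $\tilde T_*\mm = \mm$ transfers from Proposition \ref{MP} since the modification occurs on a $\mm$-null set. I do not anticipate any real obstacle at this stage: the nontrivial analytic content has been concentrated in the improved Bakry--\'Emery argument used to prove Proposition \ref{prop:euler} (and hence Proposition \ref{DE}), while the present theorem is essentially a clean packaging of these results through the rigidity criterion of Theorem \ref{thm:isorcd}.
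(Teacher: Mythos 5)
Your proposal is correct and follows exactly the paper's route: the paper's proof of Theorem \ref{thm:isofl} is a one-line combination of Proposition \ref{MP}, Proposition \ref{DE} and Theorem \ref{thm:isorcd}. Your extra paragraph extending the energy identity from $W^{1,2}(\Sp)$ to all of $L^2(\Sp)$ (via the contradiction argument using the flow of $-X$) is a valid and welcome filling-in of a detail the paper leaves implicit.
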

\begin{proof} Use  the preservation of measure  proved in Proposition \ref{MP} and the one of Dirichlet energy proved in Proposition \ref{DE} in conjunction  with Theorem \ref{thm:isorcd}.
\end{proof}
From now on we shall identify $\Fl^{(X)}_t$ with its continuous representative. It is readily verified from the construction that the group property \ref{grpr} holds everywhere. 

One of the consequences of the fact that $\Fl^{(X)}_t$ is an automorphism of $\mms$ is that
\begin{equation}
\label{eq:auto}
f\in \test\Sp \qquad\Rightarrow\qquad f\circ\Fl^{(X)}_t\in\test\Sp.
\end{equation}
This can be seen by noticing that since $\Fl^{(X)}_t$ is a measure preserving isometry, directly from the definition of Sobolev space we have
\[
f\in W^{1,2}(\X)\quad\Leftrightarrow\quad f\circ \Fl^{(X)}_t\in W^{1,2}(\X)\quad\text{and in this case }|\d f|\circ \Fl^{(X)}_t=|\d(f\circ \Fl^{(X)}_t)|.
\]
From this fact and the definition of Laplacian we then deduce that
\[
f\in D(\Delta)\quad\Leftrightarrow\quad f\circ \Fl^{(X)}_t\in  D(\Delta) \quad\text{and in this case }(\Delta f)\circ \Fl^{(X)}_t=\Delta(f\circ \Fl^{(X)}).
\]
A suitable iteration of these arguments then yields \eqref{eq:auto}. 

Recall also (see \cite{Gigli14}) that being $\Fl^X_t$ invertible and of bounded deformation, its differential $\d \Fl^X_t$ is a map from $\Lint^2(T\Sp)$ into itself (well)  defined by:
\begin{equation}\label{def:dFl}
\begin{split}
\d f(\d\, \Fl^X_s(Y))&=\d (f\circ\Fl^X_s) (Y)\circ \Fl^X_{-s}\qquad\forall f\in W^{1,2}(\X).
\end{split}
\end{equation}

\bigskip

We now want to prove that if $X,Y$ are both harmonic, their flows commute. The proof is based on   the following lemma:
\begin{lemma}\label{lem:dFl}  Let $(\X,\sfd,\mm)$ be a $\RCD(0,\infty)$ space and $X,Y$  harmonic vector fields. Then 
\[
\d\, \Fl^{(X)}_s(Y)=Y\qquad\forall s\in\R.
\]
\end{lemma}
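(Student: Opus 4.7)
Since the differentials $\{\d f : f \in \test\X \text{ with bounded support}\}$ generate the cotangent module, the identity $\d\, \Fl^{(X)}_s(Y)=Y$ is equivalent to $\d f(\d\, \Fl^{(X)}_s(Y))=\d f(Y)$ $\mm$-a.e.\ for every such $f$. By the defining relation \eqref{def:dFl} and the measure preservation of $\Fl^{(X)}_s$ (Theorem \ref{thm:isofl}), this reduces to checking $I(s)=I(0)$ for every $\phi\in\test\X$ with bounded support, where
\[
I(s):=\int \Phi_s\,\d F_s(Y)\,\d\mm,\qquad \Phi_s:=\phi\circ\Fl^{(X)}_s,\qquad F_s:=f\circ\Fl^{(X)}_s.
\]
Both $\Phi_s,F_s$ lie in $\test\X$ by \eqref{eq:auto} and retain bounded support since $|X|$ is constant and hence $\Fl^{(X)}$ has at most linear growth.

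The argument then rests on two auxiliary facts. First, the flow preserves its own generator: combining the definition of $\d\,\Fl^{(X)}_s$, the group identity \eqref{grpr} and the RLF characterization in Proposition \ref{prop:rlfeq} one computes $\d f(\d\,\Fl^{(X)}_s(X))=\d(f\circ\Fl^{(X)}_s)(X)\circ\Fl^{(X)}_{-s}=[\d f(X)\circ\Fl^{(X)}_s]\circ\Fl^{(X)}_{-s}=\d f(X)$, giving $\d\,\Fl^{(X)}_s(X)=X$ and in particular $\d F_s(X)=\d f(X)\circ\Fl^{(X)}_s$. Second, $F_s\in\test\X\subset\WT(\X)$ and $\nabla Y=0$ together yield that the scalar function $\d F_s(Y)=\la \nabla F_s,Y\ra$ belongs to $\W(\X)$ with $\d[\d F_s(Y)](Z)=\H{F_s}(Y,Z)$ for every vector field $Z$.

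A formal differentiation of $I$ then gives
\[
I'(s)=\int \d\Phi_s(X)\,\d F_s(Y)\,\d\mm+\int \Phi_s\,\H{F_s}(X,Y)\,\d\mm,
\]
the second summand coming from $\partial_s[\d F_s(Y)]=\d[\d F_s(X)](Y)=\H{F_s}(X,Y)$, where I have used $\nabla X=0$. Integrating by parts the first summand (using $\div X=0$, which holds by harmonicity of $X$ through \eqref{eq:hlapdiv}, and the bounded support of $\Phi_s$) transforms it into
\[
-\int \Phi_s\,\d[\d F_s(Y)](X)\,\d\mm=-\int \Phi_s\,\H{F_s}(X,Y)\,\d\mm,
\]
the last equality using the symmetry of the Hessian together with $\nabla Y=0$. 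The two contributions cancel, so $I'(s)\equiv 0$ and the claim follows. The main technical obstacle will be to make this differentiation rigorous: the natural bounds give only H\"older-$\tfrac12$ regularity of $s\mapsto\nabla F_s$ in $\Lint^2(\Tan)$, so I plan to work with an integrated form, representing $F_s-f=\int_0^s\d f(X)\circ\Fl^{(X)}_\tau\,\d\tau$ as a Bochner integral in $\W(\X)$ (legitimate because $\tau\mapsto\d f(X)\circ\Fl^{(X)}_\tau$ has constant $\W$-norm along the isometric flow and is $\Lint^2$-continuous, which by Pettis' theorem implies strong measurability in the separable space $\W(\X)$), and then performing the integration by parts inside the integral sign.
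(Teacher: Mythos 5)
Your argument is correct, and its engine is exactly the one the paper runs on: the self-transport identity $\d\,\Fl^{(X)}_s(X)=X$ (equivalently $\d(f\circ\Fl^{(X)}_s)(X)=\d f(X)\circ\Fl^{(X)}_s$, obtained from the group law and Proposition \ref{prop:rlfeq}), the fact that $\d F_s(Y)\in\W(\Sp)$ with differential $\H{F_s}(Y,\cdot)$ because $\nabla Y=0$, and the final cancellation via $\H{F_s}(X,Y)=\H{F_s}(Y,X)$ together with $\nabla X=\nabla Y=0$. Where you genuinely diverge is in the packaging of the $s$-derivative. The paper differentiates the $L^2$-valued curve $s\mapsto \d(f\circ\Fl^{(X)}_s)(Y)\circ\Fl^{(X)}_{-s}$ directly, splitting the difference quotient into the two pieces $\d\big(\tfrac{1}{h}(f\circ\Fl^{(X)}_h-f)\big)(Y)\circ\Fl^{(X)}_{-h}$ and $\tfrac1h\big(\d f(Y)\circ\Fl^{(X)}_{-h}-\d f(Y)\big)$, whose limits $\d(\d f(X))(Y)$ and $-\d(\d f(Y))(X)$ cancel; the analytic crux there is the strong $\W$-convergence of $\tfrac1h(f\circ\Fl^{(X)}_h-f)$ to $\d f(X)$, extracted from $L^2$-convergence plus a $\limsup$ bound on the $\W$-norm and uniform convexity. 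You instead dualize: you test against $\phi$, push $\Fl^{(X)}_{-s}$ onto the test function by measure preservation, and recover the paper's second term through an integration by parts against $\div X=0$ (a hypothesis the paper's proof only uses indirectly, through $(\Fl^{(X)}_r)_*\mm=\mm$). What your route buys is that the differentiation in $s$ can be justified by the Bochner-integral representation of $F_s$ in $\W(\Sp)$ — your Pettis argument is sound, since $\tau\mapsto\d f(X)\circ\Fl^{(X)}_\tau$ has constant $\W$-norm, is weakly measurable, and lives in a separable space — at the mild price of checking that $\Phi_s\,\d F_s(Y)$ is an admissible test object for the divergence (it is: $\Phi_s\in\test\Sp$ with bounded support and bounded differential, and $\d F_s(Y)\in\W(\Sp)$). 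Both proofs are of comparable length; yours trades the strong convergence of difference quotients for the weak formulation and one extra integration by parts.
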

\begin{proof} Since differential of test functions generate the whole cotangent module, the claim will follow if we show that for any $f\in \test\X$ the map $\R\ni s\mapsto \d f(\d \Fl^{(X)}_s(Y))$ is constant.

Taking account of the equality in (\ref{def:dFl}) and recalling \eqref{eq:auto}, in order to conclude it is sufficient to prove that for any $f\in\test\Sp$
\[
\frac{\d (f\circ\Fl^{(X)}_h)(Y)\circ\Fl^{(X)}_{-h}-\d f(Y)}h \, \text{ goes to  0 in the strong $\Lint^2(\Sp)$-topology as $h\to 0$}.
\]
To this aim, start observing that
\[
\begin{split}
\frac{\d (f\circ\Fl^{(X)}_h)(Y)\circ\Fl^{(X)}_{-h}-\d f(Y)}h&=\d\bigg(\frac{f\circ \Fl^{(X)}_h-f}h\bigg)(Y)\circ \Fl^{(X)}_{-h}+\frac{\d f(Y)\circ \Fl^{(X)}_{-h}-\d f(Y)}h.
\end{split}
\]
Since $f\in\test\X$ and $X\in H^{1,2}_C(T\X)$ we have $\d f(Y)\in W^{1,2}(\X)$ (see \cite{Gigli14} for details about this implication) and thus from the last claim in Proposition \eqref{prop:rlfeq} we have
\[
\lim_{s\to 0}\frac{\d f(Y)\circ \Fl^{(X)}_{-s}-\d f(Y)}s=-\d(\d f(Y))(X)\qquad\text{ in }L^2(\Sp),
\] 
hence  to conclude it is sufficient to show that
\begin{equation}
\label{eq:perchiudere}
\lim_{s\to 0}\d\bigg(\frac{f\circ \Fl^{(X)}_s-f}s\bigg)(Y)\circ \Fl^{(X)}_{-s}=\d(\d f(X))(Y).
\end{equation}
Let us start proving that
\begin{equation}
\label{eq:c1}
\frac{f\circ \Fl^{(X)}_s-f}s\quad\to\quad \d f(X)\quad\text{as }s\to0\qquad\text{ in }W^{1,2}(\X).
\end{equation}
Notice that \eqref{eq:contl2loc} grants convergence in  $L^2(\Sp)$; moreover the bound
\[
\begin{split}
\Big|\d\Big(\frac{f\circ \Fl^{(X)}_s-f}s\Big)\Big|^2&=\Big|\frac1s\int_0^s\d\big(\d f(X)\circ\Fl^{(X)}_r\big)\,\d r\Big|^2\\
&\leq \frac1s\int_0^s|\d\big(\d f(X)\circ\Fl^{(X)}_r\big)|^2\,\d r=\frac1s\int_0^s|\d(\d f(X))|^2\circ\Fl^{(X)}_r\,\d r
\end{split}
\] 
and the fact that $(\Fl^X_{r})_*\mm=\mm$ grant that  $\lims_{s\to0}\|\frac{f\circ \Fl^{(X)}_s-f}s\|_{W^{1,2}}\leq\|\d f(X)\|_{W^{1,2}}$ which is sufficient to get \eqref{eq:c1}. 

From \eqref{eq:c1} we deduce that 
\[
\d\bigg(\frac{f\circ \Fl^{(X)}_s-f}s\bigg)(Y)\quad\to\quad \d(\d f(X))(Y)\quad\text{as }s\to0\qquad\text{ in }\Lint^2(\Sp)
\]
hence \eqref{eq:perchiudere} follows from \eqref{eq:contl2loc}.
\end{proof}

\begin{theorem}\label{thm:sumflow}
Let $\mms$ be a $\RCD(0, \infty)$ space and $X, Y \in \Lint^2(\Tan)$ be two harmonic vector fields. Then for any $t,s \in \real$ it holds
\[  
\Fl^{X}_t \circ \Fl^Y_s = \Fl_1^{tX+sY}.
\]
\end{theorem}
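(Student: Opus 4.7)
The strategy is to use the uniqueness portion of Theorem \ref{thm:AT}. Since the Hodge Laplacian is linear, the vector field $Z := tX+sY$ is itself harmonic, hence it satisfies the hypotheses at the start of Section \ref{isox} and admits a unique Regular Lagrangian Flow $\Fl^{Z}_\tau$. My plan is to define
\[
G_\tau := \Fl^X_{t\tau} \circ \Fl^Y_{s\tau},\qquad \tau\in[0,1],
\]
and verify that $G$ is an RLF of the (time-independent) vector field $Z$; uniqueness will then force $G_1 = \Fl^Z_1 = \Fl^{tX+sY}_1$, which is exactly the claim.

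The first two conditions of Definition \ref{def:RLF} are immediate: Proposition \ref{MP} gives $(\Fl^X_{t\tau})_\ast\mm = (\Fl^Y_{s\tau})_\ast\mm = \mm$, so $(G_\tau)_\ast\mm = \mm$; the continuity $\tau\mapsto G_\tau(x)$ and the initial condition $G_0=\Id$ follow from the analogous properties of the two flows together with the fact (Theorem \ref{thm:isofl}) that $\Fl^X_{t\tau}$ is continuous.

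For the third condition I will verify the criterion $(b)$ of Proposition \ref{prop:rlfeq}. Fix $f \in W^{1,2}(\X)$ and decompose
\[
f \circ G_{\tau+h} - f\circ G_\tau = \bigl(f\circ \Fl^X_{t(\tau+h)} - f\circ \Fl^X_{t\tau}\bigr)\circ \Fl^Y_{s(\tau+h)} + \bigl(f\circ \Fl^X_{t\tau}\bigr)\circ \Fl^Y_{s(\tau+h)} - \bigl(f\circ \Fl^X_{t\tau}\bigr)\circ \Fl^Y_{s\tau}.
\]
Dividing by $h$ and letting $h\to 0$: the first summand, after using that $\Fl^Y_{s(\tau+h)}$ is measure-preserving and using \eqref{eq:contl2loc} for continuity in the inner argument, tends in $L^2$ to $t\, \d f(X)\circ G_\tau$ by Proposition \ref{prop:rlfeq} applied to $\Fl^X$. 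For the second summand, note that since $\Fl^X_{t\tau}$ is a measure-preserving isometry we have $f\circ \Fl^X_{t\tau}\in W^{1,2}(\X)$, so Proposition \ref{prop:rlfeq} applied to $\Fl^Y$ gives that this quotient converges in $L^2$ to
\[
s\, \d\bigl(f\circ \Fl^X_{t\tau}\bigr)(Y)\circ \Fl^Y_{s\tau}.
\]
At this point the key ingredient enters: by \eqref{def:dFl} and Lemma \ref{lem:dFl}, $\d(f\circ \Fl^X_{t\tau})(Y) = \d f\bigl(\d \Fl^X_{t\tau}(Y)\bigr)\circ \Fl^X_{t\tau} = \d f(Y)\circ \Fl^X_{t\tau}$, so the second limit equals $s\,\d f(Y)\circ G_\tau$. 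Summing,
\[
\lim_{h\to 0}\frac{f\circ G_{\tau+h}-f\circ G_\tau}{h} = \bigl(t\,\d f(X) + s\,\d f(Y)\bigr)\circ G_\tau = \d f(Z)\circ G_\tau\quad\text{in }L^2(\X),
\]
which is exactly \eqref{eq:derl2} for the constant vector field $Z$. Lipschitz continuity of $\tau\mapsto f\circ G_\tau\in L^2$ follows by the same decomposition together with Lipschitzianity of each factor.

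By Proposition \ref{prop:rlfeq} this shows $G$ is an RLF of $Z$, and by uniqueness in Theorem \ref{thm:AT} we conclude $G_\tau = \Fl^Z_\tau$ for all $\tau\in[0,1]$; taking $\tau=1$ gives the theorem. The main obstacle in this plan is the identification of the $L^2$-derivative of the second summand, which is where Lemma \ref{lem:dFl} (the fact that the differential of $\Fl^X_s$ leaves $Y$ invariant) is essential; without it, the vector field governing $G$ would involve the transported vector field $\d\Fl^X_{t\tau}(Y)$ rather than $Y$ itself, and the limit would not be of the expected form.
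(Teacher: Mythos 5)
Your proof is correct and follows essentially the same route as the paper: both define $G_\tau=\Fl^X_{t\tau}\circ\Fl^Y_{s\tau}$, identify its $L^2$-derivative as $\d f(tX+sY)\circ G_\tau$ using \eqref{def:dFl} together with Lemma \ref{lem:dFl} to replace $\d(f\circ\Fl^X_{t\tau})(Y)$ by $\d f(Y)\circ\Fl^X_{t\tau}$, and conclude by the characterization in Proposition \ref{prop:rlfeq} and the uniqueness part of Theorem \ref{thm:AT}. The only difference is that you spell out the difference-quotient decomposition that the paper compresses into ``by direct computation.''
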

\begin{proof}
For any $r \in \real$ consider the map $G_r := \Fl^{X}_{rt} \circ \Fl^Y_{rs}$. Now take $f \in \W(\Sp)$ and observe that $f \circ \Fl^X_{rt} \in \W(\Sp)$, as a consequence of Theorem \ref{thm:isofl}, and that from Proposition \ref{prop:rlfeq} it easily follows that $r\mapsto f\circ G_r\in L^2(\X)$ is Lipschitz. By direct computation we have:
\begin{equation}
\label{eq:derr}
\frac\d{\d r} (f \circ G_r) = \frac{\d}{\d r}(f \circ \Fl^X_{rt} \circ \Fl^Y_{rs}) = s\,\d (f \circ \Fl_{rt}^X)(Y) \circ \Fl^Y_{rs} + t\,\d f(X) \circ \Fl^X_{rt} \circ \Fl^Y_{rs}.   
\end{equation}
Using first identity  \eqref{def:dFl} and then Lemma \ref{lem:dFl}   we have
\[ 
\d (f \circ \Fl_{rt}^X)(Y) \circ \Fl^Y_{rs}=\d f (\d \Fl^X_{rt} (Y) ) \circ \Fl^X_{rt} \circ \Fl^Y_{rs}=\d f (Y) \circ \Fl^X_{rt} \circ \Fl^Y_{rs}
\]
and thus from \eqref{eq:derr} we obtain
\[
\frac\d{\d r} (f \circ G_r) = s\,\d f (Y) \circ \Fl^X_{rt} \circ \Fl^Y_{rs}+ t\,\d f(X) \circ \Fl^X_{rt} \circ \Fl^Y_{rs}=\d f(tX+sY)\circ G_r
\]
and since it is obvious by construction that $(G_r)$ has the properties $(i),(ii)$ in Definition \ref{def:RLF}, by Proposition \ref{prop:rlfeq} we deduce that $(G_r)$ is a Regular Lagrangian Flow of $tX+sY$ and thus by the uniqueness part of Theorem \ref{thm:AT} we deduce that for any $r\geq 0$ we have
\[
\Fl^{X}_{rt} \circ \Fl^Y_{rs} = \Fl_r^{tX+sY}
\]
$\mm$-a.e.. In particular this holds for $r=1$ and since both sides are continuous functions, equality holds everywhere.
\end{proof}
\section{Proof of the Main Result}

\subsection{Setting}\label{se:setting}
Here we fix the assumptions and notations that will be used in the rest of the text.

 $\mms$ is a $\RCD^\ast(0, N)$ metric measure space with $\supp(\mm)=\X$, $N \in \N$, $N>0$, and such that ${\rm dim}(H^k_{\rm dR})(\X) = N$.
 
The theory developed in \cite{Gigli14} grants the existence of $N$ harmonic vector fields $X_1,\ldots,X_N$ which are orthogonal in $\Lint^2(T\Sp)$. As in Section \ref{isox}, since the Ricci curvature is non-negative, these vector fields belong to  $H^{1,2}_C(T\Sp)$ and are parallel, i.e. $\nabla X_i\equiv 0$ for every $i$. It follows that  $\la X_i,X_j\ra\in \W(\Sp)$ with
\[
\d\langle X_i,X_j\rangle=\nabla X_i(\cdot,X_j)+\nabla X_j(\cdot,X_i)=0\qquad\mm-a.e.,
\]
which in turn  grants that $\la X_i,X_j\ra$ is $\mm$-a.e.\ equal to a constant function. Since $\int  \la X_i,X_j\ra\,\d\mm=0$ for $i\neq j$ we conclude that the $X_i$'s are pointwise orthogonal. The same argument also shows that up to normalization we  can, and will, assume that $|X_i|\equiv 1$ $\mm$-a.e.\ for every $i$. In particular, since these vector fields are in $L^2(T\X)$, we have
\begin{equation}
\label{eq:finmass}
\mm(\X)<\infty.
\end{equation}

\bigskip

We shall work with the product space $\Sp\times\R^N$ which will be equipped with the measure $\mm\times\mathcal L^N$ and the distance
\[
(\sfd\otimes\sfd_{\rm Eucl})^2\big((x,a),(y,b)\big):=\sfd^2(x,y)+|a-b|^2.
\]
We shall also define vector fields $Y_i\in L^0(\Sp\times\R^N)$, $i=1,\ldots,N$ as
\[
Y_i:=\Phi_2(\widehat{\nabla\pi_i})\qquad\forall i=1,\ldots,N,
\]
where $\pi_i:\R^N\to\R$ is the projection on the $i$-th coordinate,  $\widehat{\nabla\pi_i}\in L^0(\Sp,\Lint^0(T\R^N))$ is the function identically equal to $\nabla\pi^i\in \Lint^0(T\R^N)$ and where $\Phi_2:L^0(\Sp,\Lint^0(T\R^N))\to \Lint^0(T(\Sp\times\R^N))$ is defined in Proposition \ref{prop:defPhi}.

We also define the map $\F : \Sp \times \real^N \rightarrow \Sp$ by
\begin{equation}
\begin{split}
\F \colon \Sp \times \real^N &\rightarrow \Sp\\
(x,  \underline{a} =  (a_1, \dots a_N)) &\mapsto \Fl^{(X_1)}_{a_1} \circ \dots \circ \Fl^{(X_N)}_{a_N} (x).
\end{split}
\end{equation}
\subsection{Preliminary considerations}
Let us collect some easy consequences of our assumptions that can be derived from the discussion made in the previous sections. Start recalling from  \cite{AmbrosioGigliSavare12} (see also \cite{AmbrosioGigliSavare11-2}) that the product of two $\RCD(0,\infty)$ spaces is also $\RCD(0,\infty)$,  so that $\X\times\R^N$ is $\RCD(0,\infty)$.

From the fact that the $\nabla \pi_i$ are a pointwise ortonormal base for $L^0(T\R^N)$ and the fact that $\Phi_2$ preserves the pointwise norm we deduce that
\[
\la Y_i,Y_j\ra=\delta_{ij}\quad\mm\times\mathcal L^N-a.e.\quad \forall i,j
\]
and from the very definition of $\Phi_2$ we have that
\begin{equation}
\label{eq:yigrad}
Y_i=\nabla(\pi_i\circ\pi^{\R^N}).
\end{equation}
Since $\pi_i:\R^N\to\R$ is harmonic, we have $\nabla \pi_i\in D(\div_\loc,\R^N)$ with $\div(\nabla\pi_i)=0$ and thus from Propositions \ref{divvf} and \ref{prop:prodrcd} we deduce that $Y_i\in D(\div_\loc,\X\times\R^N)$ with 
\begin{equation}
\label{eq:div0}
\div Y_i\equiv 0.
\end{equation}
Taking into account \eqref{eq:lhodge} we also obtain that $Y_i^\flat\in D(\Delta_{H,\loc})$ with $\Delta_HY_i^\flat=0$ and since  $|\nabla \pi_i|\equiv 1$ and  $\Phi_2$ preserves the pointwise norm we also deduce that $|Y_i|\equiv 1$: these facts together with \eqref{eq:bocgen} grant that $Y_i\in W^{1,2}_{C,\loc}(T\X)$ with 
\begin{equation}
\label{eq:nabla0}
\nabla Y_i\equiv 0.
\end{equation}

\bigskip

Concerning the map $\F$, from Theorem \ref{thm:sumflow} we deduce that 
\begin{equation}
\label{eq:azione}
\F(\F(x,\underline a),\underline b)=\F(x,\underline a+\underline b)\qquad\forall x\in\Sp,\ \underline a,\underline b\in\R^N,
\end{equation}
so that we shall occasionally think at $\F$ as an action of $\R^N$ on $\X$. Theorem  \ref{thm:isofl} grants that this action is made of isometries, i.e.
\begin{equation}
\label{eq:isoaz}
\F(\cdot,\underline a)\colon\Sp\to\Sp\quad\text{ is an isometry for any }\underline a\in\R^N.
\end{equation}
From Theorem \ref{thm:sumflow} we also have
\[
\F(x,\underline a)= \Fl_1^{(X_{\underline a})} (x)\qquad\text{for }\qquad X_{\underline a}:=\sum_{i=1}^N a_i X_i
\]
and since the pointwise orthonormality of the $X_i$'s gives  $|X_{\underline a}|^2=|\sum_{i=1}^N a_i X_i|^2=\sum_i|a_i|^2=|\underline a|^2$ $\mm$-a.e., from \eqref{eq:metrsp} we deduce that for $\mm$-a.e.\ $x$ it holds
\[
\sfd(x,\F(x,\underline a))\leq\int_0^1|X_{\underline a}|\circ\Fl^{(X_{\underline a})}_t\,\d t\leq \||X_{\underline a}|\|_{L^\infty}=|\underline a|.
\]
Now the continuity of $\F(\cdot,\underline a)$ ensures that the above holds for every $x\in\X$ and thus taking \eqref{eq:azione} into account we conclude that 
\begin{equation}
\label{eq:1lip}
\F(x,\cdot)\colon\R^N\to\Sp\quad\text{ is 1-Lipschitz for any }x\in\Sp.
\end{equation}

Finally we remark that Proposition \ref{MP} together with Fubini theorem guarantees that
\begin{equation}\label{eq:Tpush}
\F_{\ast}(\mea \times \Leb^N\restr A) = \Leb^N(A) \mea,
\end{equation}
for every $A\subset \R^N$ Borel. This identity, \eqref{eq:isoaz} and \eqref{eq:1lip} grant in particular that $\F:\X\times\R^N\to\X$ is of local bounded deformation.

\subsection{An explicit formula for Regular Lagrangian Flows on $\Sp$}
Aim of  this section is to provide, in Proposition \ref{prop:reprfor}, an explicit representation formula for Regular Lagrangian Flows on $\Sp$ in terms of the map $\F$. The starting point is the following:
\begin{proposition}[Conjugation property]
With the same notations and assumptions as in Section \ref{se:setting} the following holds. For any $i=1,\ldots,N$, $t\in\R$ and $f\in\W_\loc(\Sp)$ it holds
\begin{equation}
\label{eq:conj1}
\d(f\circ\F)(Y_i)=\d f(X_i)\circ\F\qquad\mm\times\mathcal L^N-a.e..
\end{equation}
\end{proposition}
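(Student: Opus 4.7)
\medskip

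\noindent\emph{Proof plan.} The strategy is to use the tensorization structure of the cotangent module on $\X\times\R^N$ (Theorem \ref{dectang}) in order to split $\d(f\circ\F)(Y_i)$ into an "$\X$-component'' that is killed by orthogonality and an "$\R^N$-component'' that reduces to a partial derivative in the $a_i$-direction. That partial derivative is then computed via the commutativity of the harmonic flows (Theorem \ref{thm:sumflow}) together with the defining property of the Regular Lagrangian Flow of $X_i$.

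First I would check that $f\circ\F\in W^{1,2}_{\loc}(\X\times\R^N)$. Using \eqref{eq:isoaz} and \eqref{eq:1lip} we get the global bound
\[
\sfd(\F(x,\underline a),\F(y,\underline b))\le \sfd(x,y)+|\underline a-\underline b|,
\]
so $\F$ is globally Lipschitz, while \eqref{eq:Tpush} provides local bounded compression. Hence $\F$ is of local bounded deformation, and the pullback estimate \eqref{diffbound} gives $f\circ\F\in\Sclass_{\loc}(\X\times\R^N)$; local bounded compression takes care of the $L^2_{\loc}$ integrability. In particular, Lemma \ref{le:perprod} applies and $(f\circ\F)^x\in\Sclass_{\loc}(\R^N)$ for $\mm$-a.e.\ $x$.

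Next, Theorem \ref{dectang} yields the decomposition
\[
\d(f\circ\F)=\Phi_1\big(\d(f\circ\F)_\cdot\big)+\Phi_2\big(\d(f\circ\F)^\cdot\big).
\]
Since $Y_i=\Phi_2(\widehat{\nabla\pi_i})$, the orthogonality \eqref{eq:orto} kills the first summand upon pairing with $Y_i$, and the $\Phi_2$-analogue of \eqref{eq:2phi1} gives, for $\mm\times\mathcal L^N$-a.e.\ $(x,\underline a)$,
\[
\d(f\circ\F)(Y_i)(x,\underline a)=\d\big[(f\circ\F)^x\big](\nabla\pi_i)(\underline a)=\partial_{a_i}\big[(f\circ\F)^x\big](\underline a),
\]
where in the last step I use that on $\R^N$ the abstract differential coincides with the classical one and $\d h(\nabla\pi_i)=\partial_i h$.

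It then remains to identify $\partial_{a_i}[(f\circ\F)^x]$ with $\d f(X_i)\circ\F$. Iteratively applying Theorem \ref{thm:sumflow} one has
\[
\F(x,\underline a)=\Fl_1^{\sum_j a_j X_j}(x)=\Fl^{X_i}_{a_i}\big(\F(x,\underline a')\big),
\]
where $\underline a'$ is $\underline a$ with the $i$-th entry replaced by $0$. For every fixed $\underline a'$, the map $x\mapsto\F(x,\underline a')$ is a measure-preserving isometry by \eqref{eq:isoaz} and Proposition \ref{MP}. Combining this with the defining property of the Regular Lagrangian Flow of $X_i$ (Definition \ref{def:RLF}(iii)) applied to $f$, and integrating in $\underline a'$ via Fubini, one deduces that for $\mm\times\mathcal L^N$-a.e.\ $(x,\underline a)$ the map $t\mapsto f(\F(x,a_1,\ldots,t,\ldots,a_N))$ is absolutely continuous with derivative $\d f(X_i)(\F(x,a_1,\ldots,t,\ldots,a_N))$ at a.e.\ $t$. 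Since $(f\circ\F)^x\in\Sclass_{\loc}(\R^N)$, slicewise absolute continuity determines the weak partial $\partial_{a_i}$, and substituting into the previous display yields \eqref{eq:conj1}. The main delicate point is precisely this last transfer: passing from the "for $\mm$-a.e.\ $y$" statement provided by the RLF definition (applied at $y=\F(x,\underline a')$) to a "for $\mm\times\mathcal L^N$-a.e.\ $(x,\underline a)$" statement, for which the measure-preservation of $\F(\cdot,\underline a')$ combined with Fubini is essential.
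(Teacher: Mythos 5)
Your argument is correct, but it follows a genuinely different route from the paper's. The paper does not invoke the product decomposition at all: it writes $\d f(X_i)\circ\F$ as the strong $L^2_\loc$ limit of the difference quotients $t^{-1}(f\circ\F\circ\bar\Fl^i_t-f\circ\F)$, where $\bar\Fl^i_t(x,\underline a)=(x,\underline a+te_i)$, using Proposition \ref{prop:rlfeq} and the conjugation identity $\Fl^{X_i}_t\circ\F=\F\circ\bar\Fl^i_t$, and then identifies the weak limit of $t^{-1}(\tilde f\circ\bar\Fl^i_t-\tilde f)$ with $\d\tilde f(Y_i)$ for an arbitrary $\tilde f\in\W_\loc(\X\times\R^N)$ via the ``horizontal versus vertical derivative'' trick: the plan $(\bar\Fl^i_\cdot)_*\mu$ is a unit-speed test plan representing the gradient of $\pi_i\circ\pi^{\R^N}$, and a first-order expansion of $|\d(f_i+\eps\tilde f)|^2$ yields \eqref{eq:conj3}. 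You instead decompose $\d(f\circ\F)$ via Theorem \ref{dectang}, kill the $\Phi_1$-component by \eqref{eq:orto}, reduce the $\Phi_2$-component to the Euclidean partial derivative $\partial_{a_i}(f\circ\F)^x$ through the $\Phi_2$-analogue of \eqref{eq:2phi1}, and then compute that partial derivative slicewise from Definition \ref{def:RLF}(iii), the measure preservation of $\F(\cdot,\underline a')$, Fubini, and the ACL characterization of Sobolev functions on $\R^N$. Your route leans on the heavier product-calculus machinery of Section \ref{se:calcprod} (which the paper develops anyway and uses elsewhere, e.g.\ in Lemma \ref{le:psi2}) together with classical Euclidean facts, whereas the paper's argument is self-contained at the level of test plans and difference quotients and in passing essentially shows that $\bar\Fl^i$ is the Regular Lagrangian Flow of $Y_i$. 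You correctly isolate the one genuinely delicate point in your scheme, namely upgrading the ``$\mm$-a.e.\ $y$'' statement of the RLF property to an ``$\mm\times\mathcal L^N$-a.e.\ $(x,\underline a)$'' statement via \eqref{eq:Tpush} and Fubini; with that in place, and with a routine cut-off to pass from $\W(\X)$ to $\W_\loc(\X)$ in Definition \ref{def:RLF}(iii) (or equivalently item (c) of Proposition \ref{prop:rlfeq}), the proof is complete.
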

\begin{proof} Let us put  $\bar\Fl^i_t(x,\underline a):=(x,\underline a+te_i)$ and notice that by the very definition of $\F$ and identity\eqref{eq:azione}  we have
\begin{equation}
\label{eq:conj2}
\Fl^{X_i}_t\circ\F=\F\circ\bar\Fl^i_t.
\end{equation}
Fix  $f\in\W_\loc(\Sp)$ and recall Proposition \ref{prop:rlfeq}  to get
\[
\d f(X_i)\circ\F=\Big(\lim_{t\downarrow 0}\frac{f\circ\Fl^{(X_i)}_t-f}{t}\Big)\circ\F\stackrel{\eqref{eq:conj2}}=\lim_{t\downarrow 0}\frac{f\circ\F\circ \bar\Fl^i_t-f\circ\F}{t},
\]
the first limit being in $L^2_\loc(\Sp)$ and the second in $L^2_\loc(\Sp\times\R^N)$. Hence to conclude it is sufficient to show that for any $\tilde f\in\W_\loc(\Sp\times\R^N)$ and $\rho\in L^\infty(\Sp\times\R^N)$ with bounded support we have
\begin{equation}
\label{eq:conj3}
\lim_{t\downarrow0}\int\frac{\tilde f \circ \bar\Fl^i_t-\tilde f }{t}\rho\,\d(\mm\times\mathcal L^N)=\int  \d\tilde f(Y_i)\rho\,\d(\mm\times\mathcal L^N).
\end{equation}
By the linearity in $\rho$ of this expression we can further assume that $\rho$ is a probability density. Then put $\mu:=\rho\mm$ and  $\ppi:=(\bar\Fl^i_\cdot)_*\mu$, where here $\bar\Fl^i_\cdot:\Sp\times\R^N\to C([0,1],\Sp\times\R^N)$ is the map sending $(x,\underline a)$ to the curve $[0,1]\ni t\mapsto \Fl^i_t(x,\underline a)$. Notice that $\ppi$ is a test plan on $\Sp\times\R^N$ which  is concentrated on curves with speed constantly equal to 1, thus for any $\tilde f\in\W_\loc(\Sp\times\R^N)$ we have
\[
\begin{split}
\frac{\int \tilde f\,\d(\bar\Fl^i_t)_*\mu-\int \tilde f\,\d\mu }t&=\frac1t\int \tilde f(\gamma_t)-\tilde f(\gamma_0)\,\d\ppi(\gamma)\\
&\leq \frac1t\iint_0^t|\d \tilde f|(\gamma_s)|\dot\gamma_s|\,\d s\,\d\ppi(\gamma)\\
&\leq \frac1{2t}\iint_0^t|\d \tilde f|^2(\gamma_s)\,\d s\,\d\ppi(\gamma)
+\frac1{2t}\iint_0^t|\dot\gamma_s^2|\,\d s\,\d\ppi(\gamma)\\
&=\frac1{2t}\int_0^t\int|\d \tilde f|^2\circ \bar\Fl^i_s\,\d\mu\,\d s+\frac12.
\end{split}
\]
Recalling \eqref{eq:contl2loc} we thus have
\begin{equation}
\label{eq:lims}
\lims_{t\downarrow0}\frac{\int \tilde f\,\d(\bar\Fl^i_t)_*\mu-\int \tilde f\,\d\mu }t\leq \frac12\int |\d \tilde f|^2\,\d\mu+\frac12.
\end{equation}
Now put for brevity  $f_i:=\pi^i\circ\pi^{R^N}$, so that $f_i$ is 1-Lipschitz, \eqref{eq:yigrad} reads as
\begin{equation}
\label{eq:gradf}
\nabla f_i=Y_i
\end{equation} 
and  by construction it holds $f_i\circ\bar\Fl^i_s =f_i+s$, so that
\begin{equation}
\label{eq:limi}
\limi_{t\downarrow0}\frac{\int f_i\,\d(\bar\Fl^i_t)_*\mu-\int f_i\,\d\mu }t\geq 1\geq \frac12\int |\d f_i|^2\,\d\mu+\frac12.
\end{equation}
(In the terminology of \cite{Gigli12} we just proved that $\ppi$ represents the gradient of $f_i$ and we are now going to use the link between `horizontal and vertical' derivatives).
Writing \eqref{eq:lims} for $f+\eps \tilde f$ in place of $\tilde f$ and subtracting \eqref{eq:limi} we obtain
\[
\lims_{t\downarrow0}\eps\frac{\int \tilde f\,\d(\bar\Fl^i_t)_*\mu-\int \tilde f\,\d\mu }t\leq \frac12\int |\d (f+\eps \tilde f)|^2-|\d f|^2\,\d\mu\stackrel{\eqref{eq:gradf}}=\int \eps\,\d \tilde f(Y_i)+\frac{\eps^2}2|\d \tilde f|^2\,\d\mu.
\]
Dividing by $\eps>0$ (resp. $\eps<0$) and letting $\eps\downarrow 0$ (resp. $\eps\uparrow0$) we obtain \eqref{eq:conj3} and the conclusion.
\end{proof}

We now introduce a  map $\Psi:L^0(T\Sp)\to L^0(T(\Sp\times\R^N))$ as 
\begin{equation}
\label{eq:defphi}
\Psi(v):=\sum_{i=1}^N\la v,X_i\ra\circ\F\ \,Y_i%\qquad\text{ for }\qquad v=\sum_{i=1}^Na_iX_i.
\end{equation}
%Notice that since $X_1,\ldots,X_N$ is a basis for $L^0(T\Sp)$, the functions  $a_i\in L^0(\Sp)$ above exist and are unique, so that the definition is well posed.

\begin{lemma}\label{le:Psi1}
With the same notations and assumptions as in Section \ref{se:setting} and with $\Psi$ defined as in \eqref{eq:defphi}, the following holds. Let $v\in L^\infty\cap W^{1,2}_C(T\Sp)$. Then:
\begin{itemize}
\item[i)] $\la v,X_i\ra\in W^{1,2}(\Sp)$ for every $i$ and  $v\in D(\div)$ with
\begin{equation}
\label{eq:divv}
\div(v)=\sum_{i=1}^N\d (\la v,X_i\ra)(X_i)
\end{equation}
\item[ii)] $\Psi(v)\in L^\infty\cap W^{1,2}_{C,\loc}\cap D(\div_\loc)(T(\Sp\times\R^n))$ with
\[
\begin{split}
\nabla(\Psi(v))&=\sum_{i=1}^N\nabla(\la v,X_i\ra\circ\F )\otimes Y_i,\\
\div(\Psi(v))&=\div (v)\circ\F.
\end{split}
\]
\end{itemize}
\end{lemma}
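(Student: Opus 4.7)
My plan is to handle each part by expanding in the appropriate orthonormal frame and then applying Leibniz-type rules, exploiting that both $X_i$ and $Y_i$ are parallel and divergence-free.

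For \textbf{part (i)}, I would first observe that the hypothesis $\dim H^1_{\mathrm{dR}}(\X) = N$ combined with Theorems \ref{thm:hodgercd} and \ref{thm:han} forces $\dmin = \dmax = N$, so the tangent module $L^0(T\X)$ has constant pointwise dimension $N$ and the pointwise orthonormal family $X_1,\ldots,X_N$ is necessarily a pointwise basis. Hence $v = \sum_i f_i X_i$ $\mm$-a.e., with $f_i := \langle v, X_i\rangle$. I would then use the metric compatibility of the covariant derivative (available from \cite{Gigli14}) together with $\nabla X_i = 0$ to conclude $\d f_i = \nabla v(\cdot, X_i)$, giving $|\d f_i|\le |\nabla v|_{\HS}\in L^2$, and since $|f_i|\le|v|\in L^\infty$, we obtain $f_i \in L^\infty\cap W^{1,2}(\X)$. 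The Leibniz rule for the divergence, applied to $f_i X_i$ and combined with $\div X_i = 0$, then gives $f_i X_i \in D(\div)$ with $\div(f_i X_i) = \d f_i(X_i) \in L^2$; linearity of the divergence yields the claimed formula.

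For \textbf{part (ii)}, I would treat $\Psi(v)$ term by term, writing $\Psi(v) = \sum_i (f_i\circ\F) Y_i$. The $L^\infty$ bound follows immediately from orthonormality of $\{X_i\}$ and $\{Y_i\}$: $|\Psi(v)|^2 = \sum_i (f_i\circ\F)^2 = |v|^2\circ\F$. Since $\F$ is of local bounded deformation (from \eqref{eq:isoaz} and \eqref{eq:Tpush}) and $f_i\in W^{1,2}(\X)\cap L^\infty$, we have $f_i\circ\F \in W^{1,2}_\loc\cap L^\infty(\X\times\R^N)$. Combining $\nabla Y_i = 0$ from \eqref{eq:nabla0} with the Leibniz rule for the covariant derivative gives $\nabla((f_i\circ\F)Y_i) = \nabla(f_i\circ\F)\otimes Y_i$, and summing produces the stated formula for $\nabla\Psi(v)$. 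For the divergence, $\div Y_i = 0$ from \eqref{eq:div0} and the Leibniz rule for the divergence yield $\div((f_i\circ\F) Y_i) = \d(f_i\circ\F)(Y_i)$; the conjugation identity \eqref{eq:conj1} then rewrites this as $\d f_i(X_i)\circ\F$, and summing with part (i) gives $\div\Psi(v) = \div(v)\circ\F$, which lies in $L^2_\loc$ by \eqref{eq:Tpush}.

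The \textbf{main obstacle} is really just bookkeeping: each application of a Leibniz rule requires the product to land in the correct integrability class, and these checks all reduce to the $L^\infty$ bounds on $f_i$ and $Y_i$ together with the $L^2$ bounds on $\nabla v$ and $\nabla(f_i\circ\F)$. The one substantive ingredient for part (ii) is the conjugation formula \eqref{eq:conj1}, which transfers differentiation along $Y_i$ on the product space into differentiation along $X_i$ upstairs and is precisely what yields the clean relation $\div\Psi(v) = \div(v)\circ\F$.
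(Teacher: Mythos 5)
Your proposal is correct and follows essentially the same route as the paper: decompose $v=\sum_i\langle v,X_i\rangle X_i$ in the pointwise orthonormal frame, use $\d\langle v,X_i\rangle=\nabla v(\cdot,X_i)$ (metric compatibility plus $\nabla X_i=0$) for part (i), and for part (ii) combine $\nabla Y_i=0$, $\div Y_i=0$ with the conjugation identity \eqref{eq:conj1}; the paper merely writes out the integration by parts against test functions where you invoke the Leibniz rule for the divergence, which is the same computation. Your explicit justification that the $X_i$ form a pointwise basis via Theorem \ref{thm:han} is a detail the paper leaves implicit, and is a welcome addition.
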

\begin{proof}\\
\noindent{\bf (i)}
From \cite{Gigli14} we know that the assumptions on $v$ grant that $\la v,X\ra\in W^{1,2}_\loc(\Sp)$ for every $X\in L^\infty\cap H^{1,2}_C(T\Sp)$ with  
\[
\d\langle v,X\rangle=\nabla v(\cdot,X)+\nabla X(\cdot, v).
\]
Picking $X:=X_i$  and recalling that $D(\Delta_H)\subset (H^{1,2}_C(T\X))^\flat$ by the very definition of $\Delta_H$, we conclude that $\langle v,X_i\rangle$ belongs to $\W(\Sp)$, as claimed. Now put $a_i:=\la v,X_i\ra$ for brevity, so that $v=\sum_ia_iX_i$, let $f\in W^{1,2}(\Sp)$ and notice that
\[
\int \d f(v)\,\d\mm=\sum_{i=1}^N\int \d f(a_iX_i)\,\d\mm=-\sum_{i=1}^N\int f\div(a_iX_i)=-\int\sum_{i=1}^N f\d a_i(X_i)\,\d\mm,
\]
having used the fact that $\div(X_i)=0$. This proves both $v\in D(\div)$ and \eqref{eq:divv}.

\noindent{\bf (ii)}  The assumption $v\in L^\infty(T\Sp)$ trivially yields  $a_i\in L^\infty(\Sp)$ and since $Y_i\in L^\infty\cap W^{1,2}_C(\Sp\times\R^N)$ with $\nabla Y_i=0$ (recall \eqref{eq:nabla0}),  we have $(a_i\circ\F)\,Y_i\in W^{1,2}_{C,\loc}(T(\X\times\R^N))$ with
\[
\nabla((a_i\circ\F)\,Y_i)=\nabla(a_i\circ\F)\otimes Y_i+a_i\circ\F\,\nabla Y_i=\nabla(a_i\circ\F)\otimes Y_i.
\]
The fact that $\Phi(v)\in L^\infty\cap W^{1,2}_{C,\loc}(T(\Sp\times\R^n)$ and the formula for $\nabla(\Psi(v))$ follow.

We turn to the divergence: for   $g\in \W(\Sp\times\R^N)$ with bounded support we have
\[
\begin{split}
\int \d g(\Psi(v))\,\d(\mm\times\mathcal L^N)&=\sum_{i=1}^N\int \d g(a_i\circ\F\,Y_i)\,\d(\mm\times\mathcal L^N)\\
\text{because $\div (Y_i)=0$}\qquad\qquad&=-\sum_{i=1}^N\int g \,\d(a_i\circ\F)(Y_i)\,\d(\mm\times\mathcal L^N)\\
\text{by \eqref{eq:conj1}}\qquad\qquad&=-\sum_{i=1}^N\int g \,\d a_i(X_i)\circ\F\,\d(\mm\times\mathcal L^N),
\end{split}
\]
which by \eqref{eq:divv} is the conclusion.
\end{proof}
Let $(v_t)\in L^\infty([0,1],L^2(T\Sp))\cap L^2([0,1],W^{1,2}_C(T\Sp))$ be such that $(\div(v_t))\in L^\infty([0,1],L^\infty(\Sp))$, so that in particular the Regular Lagrangian Flow $(\Fl^{(v_t)}_s)$ is well defined. The  integrability condition of $(v_t)$ ensures that $(\la v_t,X_i\ra )\in L^\infty([0,1],L^2(\Sp))$ for every $i=1,\ldots,N$ and thus from \eqref{eq:bdflow} we see that $(\la v_s,X_i\ra \circ\Fl^{(v_t)}_s )\in L^\infty([0,1],L^2(\Sp))$ as well. Hence  the functions
\begin{equation}
\label{eq:Ait}
A_{i,t}:=\int_0^t\la v_s,X_i\ra \circ\Fl^{(v_t)}_s\,\d s\in L^2(\Sp),\qquad t\in[0,1],\ i=1,\ldots,N,
\end{equation}
are well defined. We then have the following result:

\begin{lemma}\label{le:psi2} With the same assumptions and notation as in  Section \ref{se:setting}, let $(v_t)\in L^\infty([0,1],L^2(T\Sp))\cap L^2([0,1],W^{1,2}_C(T\Sp))$ be such that $(\div(v_t))\in L^\infty([0,1],L^\infty(\Sp))$ and define $\Psi$ as in \eqref{eq:defphi}.% and the functions $A_{i,t}$ as in \eqref{eq:Ait}.

Then  the vector fields $\Psi(v_t)$ satisfy the assumptions of Theorem \ref{thm:AT} and for any $s\in\R$ the following identities hold $\mm\times\mathcal L^N$-a.e.:
\begin{align}
\label{eq:conj5}
\F\circ \Fl^{(\Psi(v_t))}_s&=\Fl^{(v_t)}_s\circ\F,\\
\label{eq:proj1}
\pi^{\Sp}\circ\Fl^{(\Psi(v_t))}_s  &=\pi^\Sp\\
\label{eq:proj2}
\pi_i\circ \pi^{\R^N}\circ \Fl^{(\Psi(v_t))}_s  &=\pi_i\circ \pi^{\R^N}+A_{i,s}\circ\F.
\end{align}
\end{lemma}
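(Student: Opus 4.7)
The plan is to apply Theorem \ref{thm:AT} to $(\Psi(v_t))$ to obtain its Regular Lagrangian Flow $\Fl^{(\Psi(v_t))}_s$, and then prove the three identities in the order \eqref{eq:proj1}, \eqref{eq:conj5}, \eqref{eq:proj2}. The hypotheses of Theorem \ref{thm:AT} for $\Psi(v_t)$ follow from Lemma \ref{le:Psi1}(ii), which gives $\Psi(v_t) \in L^\infty \cap W^{1,2}_{C,\loc}(T(\Sp \times \R^N)) \cap D(\Div_\loc)$ together with $|\Psi(v_t)|^2 = |v_t|^2 \circ \F$ and $\Div \Psi(v_t) = \Div v_t \circ \F$, so the required time-integrability is inherited from $(v_t)$ once the theorem is read in its local formulation (needed because $\R^N$ has infinite Lebesgue measure).

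For \eqref{eq:proj1}: for every $g \in \W(\Sp)$, Proposition \ref{prop:defPhi} gives $\d(g \circ \pi^\Sp) = \Phi_1(\widehat{\d g})$, and since $Y_i = \Phi_2(\widehat{\nabla \pi_i})$ the orthogonality \eqref{eq:orto} yields $\langle \d(g \circ \pi^\Sp), Y_i\rangle = 0$, hence $\d(g \circ \pi^\Sp)(\Psi(v_s)) = 0$. Proposition \ref{prop:rlfeq}(c) applied to $g \circ \pi^\Sp \in \W_\loc(\Sp \times \R^N)$ then forces $s \mapsto g \circ \pi^\Sp \circ \Fl^{(\Psi(v_t))}_s$ to be constant in $\Lint^2_\loc$; varying $g$ over a countable Lipschitz family separating points of $\Sp$ (which lies in $\W(\Sp)$ thanks to \eqref{eq:finmass}) yields \eqref{eq:proj1}.

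The crux of the argument is \eqref{eq:conj5}. Fix $\mu \in \mathscr{P}(\Sp \times \R^N)$ with bounded support and bounded density, and set $\nu_s := (\Fl^{(\Psi(v_t))}_s)_* \mu$; by Theorem \ref{thm:unice}, $(\nu_s)$ is the unique solution of the continuity equation for $\Psi(v_t)$ starting at $\mu$. The idea is to show that $(\F_* \nu_s)$ solves the continuity equation for $(v_t)$ on $\Sp$ starting at $\F_* \mu$: exploiting the conjugation property \eqref{eq:conj1} together with the fact that $\{X_i\}$ is a pointwise orthonormal basis of $\Lint^0(T\Sp)$,
\begin{equation*}
\d(f \circ \F)(\Psi(v_s)) = \sum_i (\langle v_s, X_i\rangle \circ \F)(\d f(X_i) \circ \F) = \d f\Big(\sum_i \langle v_s, X_i\rangle X_i\Big) \circ \F = \d f(v_s) \circ \F,
\end{equation*}
so that $\tfrac{\d}{\d s}\int f\, \d(\F_*\nu_s) = \int \d f(v_s)\, \d(\F_*\nu_s)$. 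Theorem \ref{thm:unice} then identifies $\F_* \nu_s = (\Fl^{(v_t)}_s)_* \F_* \mu$, and the argument underlying \eqref{eq:mappeuguale} (which works verbatim for Borel maps $\Sp \times \R^N \to \Sp$) produces \eqref{eq:conj5}.

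Finally, \eqref{eq:proj2} follows by applying Proposition \ref{prop:rlfeq}(c) to $g := \pi_i \circ \pi^{\R^N}$: using \eqref{eq:yigrad} and the pointwise orthonormality of $\{Y_j\}$,
\begin{equation*}
\tfrac{\d}{\d s}\big(g \circ \Fl^{(\Psi(v_t))}_s\big) = \langle Y_i, \Psi(v_s)\rangle \circ \Fl^{(\Psi(v_t))}_s = (\langle v_s, X_i\rangle \circ \F) \circ \Fl^{(\Psi(v_t))}_s \overset{\eqref{eq:conj5}}{=} \langle v_s, X_i\rangle \circ \Fl^{(v_t)}_s \circ \F,
\end{equation*}
and integration in $s$ gives the claim. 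The main technical obstacle throughout is the infinite measure of $\R^N$: Theorems \ref{thm:AT} and \ref{thm:unice} must be used in localized form, but this is manageable because $\Psi(v_t)$ has bounded $L^\infty$-norm (giving finite speed of propagation) and \eqref{eq:Tpush} ensures that absolute continuity with respect to $\mea \times \Leb^N$ is preserved under push-forward by $\F$.
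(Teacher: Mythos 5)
Your proposal is correct and follows essentially the same route as the paper: the key identity $\d(f\circ\F)(\Psi(v_s))=\d f(v_s)\circ\F$ derived from \eqref{eq:conj1}, the continuity-equation uniqueness argument of Theorem \ref{thm:unice} combined with \eqref{eq:mappeuguale} for \eqref{eq:conj5}, and differentiation of $\pi_i\circ\pi^{\R^N}$ along the flow for \eqref{eq:proj2}. The only (harmless) deviation is in \eqref{eq:proj1}, where you differentiate $g\circ\pi^\Sp$ directly via Proposition \ref{prop:rlfeq} and a separating family, whereas the paper runs the same continuity-equation uniqueness argument with zero vector field; your remark on the need to localize Theorems \ref{thm:AT} and \ref{thm:unice} on $\Sp\times\R^N$ is a legitimate point that the paper leaves implicit.
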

\begin{proof} The fact that the $\Psi(v_t)$'s satisfy the assumptions of Theorem  \ref{thm:AT}  is a direct consequence of the assumptions and Lemma \ref{le:Psi1}. Also, from \eqref{eq:conj1} we directly deduce
\begin{equation}
\label{eq:conj4}
\d (f\circ\F)(\Psi(v_t))=\d f(v_t)\circ\F\qquad\mm\times\mathcal L^N-a.e.
\end{equation}
for every $t\in[0,1]$ and $f\in\W_\loc(\Sp)$. Now pick $\bar\mu_0\in\prob{\Sp\times\R^N}$ with bounded support and such that $\bar\mu_0\leq C\mm\times\mathcal L^N$ for some $C>0$ and for $s\in\R$ define
\[
\bar\mu_s:=(\Fl^{(\Psi(v_t))}_s)_*\bar\mu_0\in\prob{\Sp\times\R^N}\qquad\qquad\text{ and }\qquad\qquad \mu_s:=\F_*\bar\mu_s\in\prob{\Sp}.
\]
We claim that $(\mu_s)$ solves the continuity equation with vector fields $(v_s)$ in the sense of Definition \ref{def:solce} and start observing that, locally in $s$, the measures $\bar\mu_s,\mu_s$ have uniformly bounded density. Now  pick $f\in\W(\Sp)$, so that   $f\circ\F\in\W_\loc(\Sp\times\R^N)$ and, from  Proposition \ref{prop:rlfeq}, $s\mapsto\int f\,\d\mu_s= \int f\circ\F\circ (\Fl^{(\Psi(v_t))}_s)\,\d\bar\mu_0$ is Lipschitz with 
\[
\begin{split}
\frac\d{\d s}\int f\,\d\mu_s&=\frac\d{\d s}\int f\circ\F\circ (\Fl^{(\Psi(v_t))}_s)\,\d\bar\mu_0\\
\text{by Proposition \ref{prop:rlfeq}}\qquad\qquad&=\int \d(f\circ\F)(\Psi(v_s))\circ (\Fl^{(\Psi(v_t))}_s)\,\d\bar\mu_0\\
&=\int \d(f\circ\F)(\Psi(v_s))\,\d\bar\mu_s\\
\text{ by  \eqref{eq:conj4}}\qquad\qquad&=\int \d f(v_s)\circ\F\,\d\bar\mu_s\\
&=\int \d f(v_s)\,\d\mu_s.
\end{split}
\]
This proves our claim. Hence  by the representation formula in Theorem \ref{thm:unice} we deduce that
\[
\F_*(\Fl^{(\Psi(v_t))}_s)_*\bar\mu_0=(\Fl^{(v_t)}_s)_*\F_*\bar\mu_0\qquad\forall s\in\R
\]
and from the arbitrariness of $\bar\mu_0$ and \eqref{eq:mappeuguale} identity \eqref{eq:conj5} follows.

To prove \eqref{eq:proj1} pick $\bar\mu_0,f$ and define $\bar\mu_s$ as above. Then we also put
\[
\nu_s:=\pi^\Sp_*\bar\mu_s\in\prob{\Sp}\qquad\forall s\in\R
\] 
and notice that again the $\nu_s$'s have, locally in $s$, uniformly bounded  densities and that it holds
\[
\begin{split}
\frac{\d}{\d s}\int f\,\d\nu_s=\frac{\d}{\d s}\int f\circ\pi^\Sp\,\d\bar\mu_s=\int \d(f\circ\pi^\Sp)(\Psi(v_s))\,\d\bar\mu_s=\int\Phi_1(\widehat{\d f}) ( \Psi(v_s))\,\d\bar\mu_s=0,
\end{split}
\]
where as usual $\widehat{\d f}\in L^0(\R^N,L^0(T^*\Sp))$ is the function identically equal to $\d f$ and the last identity follows from \eqref{eq:orto} and the very definitions of $\Psi(v_t)$ and $Y_i$. This shows that $(\nu_s)$ solves the continuity equation \eqref{eq:ce} with 0 vector fields, hence by the uniqueness of the solutions we deduce that $(\nu_s)$ is constant, i.e.
\[
\pi^\Sp_*(\Fl^{(\Psi(v_t))}_s)_*\bar\mu_0=\pi^\Sp_*\bar\mu_0,
\]
so that again the arbitrariness of $\bar\mu_0$ and  \eqref{eq:mappeuguale}  give \eqref{eq:proj1}.

For \eqref{eq:proj2}, we notice that the two sides agree for $s=0$ and are  absolutely continuous as functions of $s$ with values in $L^2_\loc(\Sp\times\R^N)$ (recall Proposition \ref{prop:rlfeq}). The conclusion then follows recalling  that  it holds $\nabla(\pi_i\circ \pi^{\R^N})=Y_i$,  so that
\[
\begin{split}
\frac{\d}{\d s} \pi_i\circ \pi^{\R^N}\circ \Fl^{(\Psi(v_t))}_s&=\d ( \pi_i\circ \pi^{\R^N})(\Psi(v_s))\circ \Fl^{(\Psi(v_t))}_s=\la Y_i,\Psi(v_s)\ra\circ \Fl^{(\Psi(v_t))}_s\\
&=\la v_s,X_i\ra\circ\F\circ \Fl^{(\Psi(v_t))}_s\stackrel{\eqref{eq:conj5}}=\la v_s,X_i\ra\circ \Fl^{( v_t)}_s\circ\F=\frac{\d}{\d s}A_{i,s}\circ\F.
\end{split}
\]
This is sufficient to conclude.
\end{proof}
We can now state the main result of the section:
\begin{proposition}[Representation formula for $\Fl^{(v_t)}$]\label{prop:reprfor}
With the same assumptions and notation as in  Section \ref{se:setting}, let $(v_t)\in L^\infty([0,1],L^2(T\Sp))\cap L^2([0,1],W^{1,2}_C(T\Sp))$ be such that $(\div(v_t))\in L^\infty([0,1],L^\infty(\Sp))$ and define the functions $A_{i,t}\in L^2(\Sp)$ as in \eqref{eq:Ait}. 

Then for any $s\in[0,1]$ and $\mm$-a.e.\ $x\in\X$ it holds
\begin{equation}
\label{eq:reprform}
\Fl^{(v_t)}_s(x)=\F(x,\underline A_s(x)),
\end{equation}
where $\underline A_s:=(A_{1,s},\ldots,A_{N,s})$.
\end{proposition}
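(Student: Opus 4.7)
The plan is to lift the flow to the product $\Sp \times \R^N$ via the vector fields $\Psi(v_t)$, use Lemma \ref{le:psi2} to get an explicit coordinate description of $\Fl^{(\Psi(v_t))}_s$, and then project back to $\Sp$ through $\F$.

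Fix $s \in [0,1]$. Combining the projection identities \eqref{eq:proj1} and \eqref{eq:proj2} from Lemma \ref{le:psi2}, we see that for $\mea \times \Leb^N$-a.e. $(x,\underline a) \in \Sp \times \R^N$
\[
\Fl^{(\Psi(v_t))}_s(x, \underline a) = \bigl(x,\ \underline a + \underline A_s(\F(x, \underline a))\bigr).
\]
Applying $\F$ to both sides, invoking the intertwining relation \eqref{eq:conj5}, and then using the group law \eqref{eq:azione} for $\F$, we obtain
\[
\Fl^{(v_t)}_s\bigl(\F(x, \underline a)\bigr) \;=\; \F\bigl(\F(x, \underline a),\ \underline A_s(\F(x, \underline a))\bigr),
\]
still for $\mea \times \Leb^N$-a.e. $(x,\underline a)$.

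To convert this into an $\mea$-a.e. statement on $\Sp$, I would pick any bounded Borel set $B \subset \R^N$ with $\Leb^N(B) > 0$ and let $E \subset \Sp \times B$ be the full $(\mea \times \Leb^N\restr B)$-measure set on which the above identity holds. By Fubini there is some $\underline a \in B$ for which the slice $E_{\underline a} = \{x \in \Sp : (x,\underline a) \in E\}$ has full $\mea$-measure. Since $\F(\cdot, \underline a):\Sp \to \Sp$ is a measure preserving bijection (isometry by \eqref{eq:isoaz}, measure preserving by Proposition \ref{MP}, invertible with inverse $\F(\cdot, -\underline a)$ by the group law), the image $\F(E_{\underline a}, \underline a)$ has full $\mea$-measure. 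Setting $y := \F(x, \underline a)$ and letting $x$ vary over $E_{\underline a}$ then yields $\Fl^{(v_t)}_s(y) = \F(y, \underline A_s(y))$ for $\mea$-a.e. $y \in \Sp$, which is \eqref{eq:reprform}.

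The argument is essentially a repackaging of Lemma \ref{le:psi2}, so no substantial new obstacle arises; the only delicate point is the careful handling of a.e.-identifications under the various compositions, which is precisely what the push-forward formula \eqref{eq:Tpush} and the invertibility of $\F(\cdot, \underline a)$ are invoked to control.
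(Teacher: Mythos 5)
Your argument is correct and follows the paper's proof essentially verbatim: the same coordinate description of $\Fl^{(\Psi(v_t))}_s$ from Lemma \ref{le:psi2}, followed by applying $\F$ with \eqref{eq:conj5} and \eqref{eq:azione}. The only (immaterial) difference is in the last bookkeeping step, where you transfer the $\mm\times\Leb^N$-a.e.\ identity to an $\mm$-a.e.\ one by Fubini slicing and the measure-preserving bijectivity of $\F(\cdot,\underline a)$, whereas the paper invokes the push-forward formula \eqref{eq:Tpush} directly; both are valid.
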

\begin{proof} The identities \eqref{eq:proj1}, \eqref{eq:proj2} give
\[
\Fl^{(\Psi(v_t))}_s(x,\underline a)=\big(x,\underline a+\underline A_s(\F(x,\underline a))\big)\qquad\mm\times\mathcal L^N-a.e.\ (x,\underline a).
\]
Applying $\F$ on both sides and taking into account \eqref{eq:conj5} and \eqref{eq:azione} we obtain
\begin{equation}\label{rfa}
\Fl^{(v_t)}_s(\F(x,\underline a))=\F\big(\F(x,\underline a),\underline A_s(\F(x,\underline a))\big)\qquad\mm\times\mathcal L^N-a.e.\ (x,\underline a).
\end{equation}
Thus for any $A\subset \R^N$ Borel we have that \eqref{eq:reprform} holds for  $\F_*(\mm\times\mathcal L^N\restr{A})$-a.e.\ $x\in\X$
and the conclusion follows from \eqref{eq:Tpush}.
\end{proof}

\subsection{Further properties of $\F$ and conclusion}

We shall need the following result, proved in \cite{Gigli-Mosconi14}, about $W_2$-geodesics and continuity equation. Recall that a measure $\ppi\in\prob{C([0,1],\X)}$ is called \emph{lifting} of the geodesic $(\mu_t)$ provided
\[
\begin{split}
(\e_t)_*\ppi&=\mu_t\qquad\forall t\in[0,1],\\
\iint_0^1|\dot\gamma_t|^2\,\d t\,\d\ppi(\gamma)&<+\infty
\end{split}
\]
and that, on an $\RCD^*(K,N)$ space, as soon as either $\mu_0$ or $\mu_1$ is absolutely continuous w.r.t.\ $\mm$, there is a unique geodesic connecting them and a unique lifting of it (see \cite{GigliRajalaSturm13}).
\begin{proposition}\label{prop:LS} Let $(\X,\sfd,\mm)$ be a $\RCD^*(K,N)$ space and $(\mu_t)\subset \prob{\Sp}$ be a $W_2$-geodesic such that $\mu_0,\mu_1$ have both bounded support and  density.  Then there are vector fields $(v_t)\subset L^2(\Sp)$ such that:
\begin{itemize}
\item[i)] the continuity equation \eqref{eq:ce} is satisfied for $(\mu_t,v_t)$ in the sense of Definition \eqref{def:solce}
\item[ii)] letting $\ppi\in\prob{C([0,1],\Sp)}$ be the lifting of $(\mu_t)$ it holds
\begin{equation}
\label{eq:speedvt}
|v_t|(\gamma_t)=|\dot\gamma_t|\qquad\ppi\times\mathcal L^1-a.e.\ (\gamma,t),
\end{equation}
\item[iii)] $v_t\in H^{1,2}\cap D(\div)(\Sp)$ for every $t\in(0,1)$ and for every $\eps\in(0,1/2)$ it holds
\[
\int_\eps^{1-\eps}\||\nabla v_t|_\HS\|^2_{L^2(\Sp)}\,\d\mm+\sup_{t\in(\eps,1-\eps)}\Big(\|v_t\|_{L^\infty(\Sp)}+\|\div(v_t)\|_{L^\infty}\Big)<\infty
\]
\end{itemize}
\end{proposition}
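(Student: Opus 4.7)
The plan is to build $v_t$ as minus the gradient of a time-regularized Kantorovich potential and then to read off properties (i)--(iii) from the standard behavior of the Hopf--Lax semigroup combined with the sharp regularity estimates proved in \cite{Gigli-Mosconi14}. Concretely, fix a Kantorovich potential $\varphi$ for the pair $(\mu_0,\mu_1)$ and set $\varphi_t:=Q_t\varphi$, where $(Q_t)$ is the Hopf--Lax semigroup associated with the cost $\tfrac12\sfd^2$. On $\RCD^*(K,N)$ spaces the $W_2$-geodesic $(\mu_t)$ and its lifting $\ppi$ are unique (see \cite{GigliRajalaSturm13}), $\ppi$ is concentrated on constant-speed geodesics with speed $W_2(\mu_0,\mu_1)$, and for $t\in[\eps,1-\eps]$ each density $\rho_t$ of $\mu_t$ is uniformly bounded, with supports sitting inside a single bounded set that depends only on $\supp\mu_0$, $\supp\mu_1$ and $W_2(\mu_0,\mu_1)$. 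I would then define $v_t:=-\nabla\varphi_t$, viewed as an element of $\Lint^2(\TanX)$ after multiplication by a compactly supported cutoff containing all the $\supp\mu_t$ for $t\in[\eps,1-\eps]$.

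Condition (i) is the first variation formula for the Hopf--Lax semigroup: for $f\in\W(\Sp)$, differentiating in $t$ and using $\partial_t\varphi_t=-\tfrac12|\nabla\varphi_t|^2$ along $\ppi$-a.e.\ curve yields $\tfrac\d{\d t}\int f\,\d\mu_t=-\int\la\nabla f,\nabla\varphi_t\ra\,\d\mu_t=\int\d f(v_t)\,\d\mu_t$. Condition (ii) is immediate from the Hopf--Lax formula, which along each transport ray $\gamma$ forces $|\nabla\varphi_t|(\gamma_t)=|\dot\gamma_t|$, combined with the constant-speed property of $\ppi$.

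Condition (iii) is the main obstacle and the reason why one reduces to \cite{Gigli-Mosconi14}. The $L^\infty$-bound on $v_t$ follows from (ii), since $|v_t|(\gamma_t)=|\dot\gamma_t|=W_2(\mu_0,\mu_1)$ on $\supp\mu_t$, and is propagated to all of $\Sp$ by the cutoff. The Sobolev bound on $v_t$ and the $L^\infty$-bound on $\div v_t=-\Delta\varphi_t$ come from two-sided Hessian and Laplacian estimates on $\varphi_t$ specific to $\RCD^*(K,N)$: schematically, the $\tfrac1t$-concavity and $\tfrac1{1-t}$-convexity of the Hopf--Lax semigroup force
\[
-\tfrac{C}{t}\,\Id\,\leq\,\hess\varphi_t\,\leq\,\tfrac{C}{1-t}\,\Id,\qquad |\Delta\varphi_t|\leq \tfrac{C}{t(1-t)},
\]
with $C$ depending only on $K,N,W_2(\mu_0,\mu_1),\mathrm{diam}(\supp\mu_0\cup\supp\mu_1)$; these are the sharp estimates derived in \cite{Gigli-Mosconi14} by coupling the second-order differentiation of the Hopf--Lax semigroup with the $\CD^*(K,N)$ condition along $(\mu_t)$. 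Passing from pointwise $\mu_t$-a.e.\ bounds to genuine $L^2(\mm)$ and $L^\infty(\mm)$ bounds uses the uniform upper bound on $\rho_t$ for $t\in[\eps,1-\eps]$ together with the cutoff outside $\bigcup_t\supp\mu_t$, and is the delicate step I would expect to lift essentially verbatim from that reference.
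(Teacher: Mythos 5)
Your overall plan --- taking $v_t$ to be the gradient of an interpolated Kantorovich potential and outsourcing the quantitative estimates to \cite{Gigli-Mosconi14} --- is the right one, and your treatment of (i), (ii) is essentially what the paper does. The genuine gap is in (iii): the two-sided bounds
\[
-\tfrac{C}{t}\,\Id\ \le\ \hess \varphi_t\ \le\ \tfrac{C}{1-t}\,\Id,\qquad |\Delta\varphi_t|\le \tfrac{C}{t(1-t)}
\]
do \emph{not} hold for $\varphi_t=Q_t\varphi$. The Hopf--Lax evolution is an inf-convolution with $\sfd^2/2t$, so the Laplacian comparison for the squared distance yields only a \emph{one-sided} bound (semiconcavity, i.e.\ an upper bound on $\Delta\varphi_t$ in the sense of measures); the complementary bound is enjoyed by the \emph{backward} evolved potential, which is a sup-convolution and hence semiconvex, and these are two different functions that coincide only on the contact set carrying $\mu_t$, not $\mm$-a.e.\ on a full-measure neighbourhood. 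Consequently, with your choice of $v_t$ you cannot conclude $\div v_t\in L^\infty(\mm)$, nor, via \eqref{eq:boundhess}, the $L^2$ control on $|\nabla v_t|_{\HS}$; attributing such two-sided estimates to ``second-order differentiation of the Hopf--Lax semigroup'' misreads what \cite{Gigli-Mosconi14} proves.

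The missing device is precisely the content of that reference: one replaces $\varphi_t$ by the solution $\eta_t$ of a \emph{double obstacle problem} whose obstacles are the forward and backward Kantorovich potentials; the abstract Lewy--Stampacchia inequality then converts the one-sided Laplacian bounds of the two obstacles into a genuine two-sided $L^\infty(\mm)$ bound on $\Delta\eta_t$, while $\eta_t$ still agrees with the potentials on $\supp\mu_t$, so $v_t:=\nabla\eta_t$ still drives the continuity equation and satisfies \eqref{eq:speedvt}. A cut-off making the $\eta_t$'s of uniformly bounded support turns the $L^\infty$ bound on $\Delta\eta_t$ into an $L^2$ bound, whence \eqref{eq:boundhess} gives the $L^2$ control on $|\nabla v_t|_{\HS}=|\H{\eta_t}|_{\HS}$, and the uniform Lipschitz bound on $\eta_t$ gives $\|v_t\|_{L^\infty}$. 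Without the obstacle-problem step your argument for (iii) does not close.
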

We briefly comment this statement, given that in  \cite{Gigli-Mosconi14} it is not presented in this form. The vector fields $v_t$ are obtained as gradients of solutions $\eta_t$ of a double obstacle problem,  the obstacles being given by appropriate `forward' and `backward' Kantorovich potentials. This grants that $(i)$ holds. Then $(ii)$ is a general property of `optimal' lifting of solutions of the continuity equation (see e.g.\ \cite{Gigli14}). The estimates in $(iii)$ are the main gain from \cite{Gigli-Mosconi14}: the Laplacian comparison for the squared distance and the Lewy-Stampacchia inequality  grant the claimed uniform control on $\div(v_t)=\Delta\eta_t$. With a cut-off procedure based on the fact that $\mu_0,\mu_1$ are assumed to have bounded support, one can show that the $\eta_t$'s can be chosen to also have uniformly bounded support: this  and the $L^\infty$-bound on the Laplacian implies an $L^2$-bound on the Laplacian itself, so that from \eqref{eq:boundhess} we get the $L^2$-control on $|\nabla v_t|_\HS=|\H{\eta_t}|_\HS$. Finally, in  \cite{Gigli-Mosconi14} it has been proved that the $\eta_t$'s are Lipschitz and, although not explicitly mentioned, keeping track of the various constants involved one can see that it is provided  a uniform control on the Lipschitz constant for $t\in(\eps,1-\eps)$, which in turn implies the desired $L^\infty$ control on $|v_t|$.

Thanks to this result we can now prove the following crucial statement:
\begin{proposition}\label{prop:suriet}
With the same notations and assumptions as in Section \ref{se:setting} the following holds. For every $x,y\in\Sp$ there exists $\underline a\in\R^N$ such that
\[
\F(x,\underline a)=y\qquad\text{ and }\qquad |\underline a|\leq \sfd(x,y).
\]
\end{proposition}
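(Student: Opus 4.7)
The plan is to apply the representation formula from Proposition~\ref{prop:reprfor} to the vector fields generating a $W_2$-geodesic between absolutely continuous regularizations of the Dirac masses $\delta_x, \delta_y$, and then pass to the limit. Given $x, y \in \X$, I would set $d := \sfd(x,y)$ and $\mu_0^n := \mm(B_{1/n}(x))^{-1}\mm\restr{B_{1/n}(x)}$, with $\mu_1^n$ defined analogously around $y$; these are probability measures with bounded density and bounded support, and they satisfy $W_n := W_2(\mu_0^n, \mu_1^n) \to d$ as $n \to \infty$. Proposition~\ref{prop:LS} then produces the unique $W_2$-geodesic $(\mu_t^n)$ and associated vector fields $(v_t^n)$.

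Since the hypotheses of Proposition~\ref{prop:reprfor} require uniform integrability over the whole $[0,1]$, whereas Proposition~\ref{prop:LS}(iii) only controls $(v_t^n)$ on subintervals $[\eps, 1-\eps]$, I would rescale time by setting $u_s^{n,\eps} := (1-2\eps)\,v^n_{\eps + s(1-2\eps)}$ for $s \in [0,1]$, which satisfies the required hypotheses thanks also to $\mm(\X) < \infty$ (see \eqref{eq:finmass}). Proposition~\ref{prop:reprfor} then yields a map $\underline A^{n,\eps}: \X \to \R^N$ with
\[
\Fl^{(u^{n,\eps}_t)}_1(x') = \F(x', \underline A^{n,\eps}(x')) \qquad \mm\text{-a.e. }x'.
\]
Cauchy-Schwarz applied componentwise, combined with Bessel's inequality exploiting the pointwise orthonormality of $X_1, \ldots, X_N$, would give
\[
|\underline A^{n,\eps}(x')|^2 \leq \int_0^1 |u_s^{n,\eps}|^2 \circ \Fl^{(u^{n,\eps}_t)}_s(x')\,ds.
\]
Integrating against $\mu_\eps^n$, using Theorem~\ref{thm:unice} to identify $(\Fl^{(u^{n,\eps}_t)}_s)_* \mu_\eps^n = \mu_{\eps+s(1-2\eps)}^n$, and Proposition~\ref{prop:LS}(ii) (which by the constant-speed nature of geodesic liftings implies $\int_0^1 \int |v_t^n|^2\,d\mu_t^n\,dt = W_n^2$), I expect to obtain
\[
\int |\underline A^{n,\eps}|^2\,d\mu_\eps^n \leq (1-2\eps)\,W_n^2.
\]

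Finally, Markov's inequality ensures that the set where $|\underline A^{n,\eps}| \leq W_n$ has $\mu_\eps^n$-measure at least $2\eps$, so intersecting with the full-measure set on which $\Fl^{(u^{n,\eps}_t)}_1$ lands in $\supp\mu_{1-\eps}^n$ produces a point $x^n_\eps$ satisfying both properties. Standard estimates on the geodesic lifting show that $\supp\mu_\eps^n$ and $\supp\mu_{1-\eps}^n$ concentrate at $x$ and $y$ respectively as $n \to \infty$ and $\eps \to 0$ (indeed, $\sfd(\supp\mu_\eps^n, x) \leq \eps(d + 2/n) + 1/n$); a diagonal choice of $\eps_n \to 0$ then gives $x^n := x^n_{\eps_n} \to x$ and $\F(x^n, \underline A^{n,\eps_n}(x^n)) \to y$. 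Since the sequence $\underline A^{n,\eps_n}(x^n)$ is bounded in $\R^N$ by $W_n \to d$, a subsequence will converge to some $\underline a \in \R^N$ with $|\underline a| \leq d$, and the joint Lipschitz continuity of $\F$ (coming from \eqref{eq:1lip} and \eqref{eq:isoaz}) will give $\F(x,\underline a) = y$. The main obstacle in this plan is the endpoint irregularity of the geodesic vector fields $(v_t^n)$, which forces the reparametrization trick and the extra limiting procedure in $\eps$; a secondary technical point, the concentration of the supports, is handled by the length bound on lifted curves.
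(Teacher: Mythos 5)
Your proposal is correct and follows essentially the same route as the paper's proof: both rest on Proposition~\ref{prop:LS} applied to a reparametrized $W_2$-geodesic with regularized endpoints, the representation formula of Proposition~\ref{prop:reprfor}, the Cauchy--Schwarz/orthonormality bound $|\underline A_1|^2\le\int_0^1|v_s|^2\circ\Fl_s\,\d s$, and a compactness-plus-continuity-of-$\F$ limit as the regularization parameters vanish. The only (immaterial) differences are that the paper contracts a ball onto $\delta_y$ and gets the estimate for $\mm$-a.e.\ starting point via the pointwise speed bound, whereas you regularize both endpoints and select a single good point by Markov's inequality from the integrated energy $W_2^2$.
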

\begin{proof}
Fix $y\in\Sp$, $R>0$, define
\[
\mu_0:=\mm(B_R(y))^{-1}\mm\restr{B_R(y)}
\qquad\qquad\qquad\mu_1:=\delta_y
\]
and let $(\mu_t)$ be the unique $W_2$-geodesic connecting $\mu_0$ to $\mu_1$ and $\ppi$ its lifting. Also, fix $\eps\in(0,1/2)$. Then  we know from \cite{GigliRajalaSturm13} that the $W_2$-geodesic $t\mapsto\mu^\eps_t:=\mu_{\eps+(1-2\eps)t}$ satisfies the assumptions of Proposition \ref{prop:LS} and that its lifting $\ppi^\eps$ is given by
 \[
\ppi^\eps= ({\rm Restr}_\eps^{1-\eps})_*\ppi, 
\]
 where ${\rm Restr}_{t_0}^{t_1}:C([0,1],\Sp)\to C([0,1],\Sp)$ is given by
\[
{\rm Restr}_{t_0}^{t_1}(\gamma)_t:=\gamma_{(1-t)t_0+tt_1}\qquad\forall \gamma\in C([0,1],\Sp).
\]
Up to pass to a further restriction,  Proposition \ref{prop:LS} grants the existence of vector fields $(v^\eps_t)$ satisfying $(i),(ii),(iii)$ in the statement. In particular, by $(iii)$ we know that the assumptions of Theorem \ref{thm:AT} are satisfied so that there exists the Regular Lagrangian Flow $(\Fl^{(v^\eps_t)}_s)$ of $(v_t^\eps)$. 

The representation formula for the solutions of the continuity equation given in  Theorem \ref{thm:unice} gives  
\begin{equation}
\label{eq:reprgeo}
\mu^\eps_s= (\Fl^{(v^\eps_t)}_s)_*\mu^\eps_0,\qquad\forall s\in[0,1].
\end{equation}
Thus letting $A^\eps_{i,t}$ be defined by \eqref{eq:Ait} for the vector fields $(v^\eps_t)$, from \eqref{eq:reprform} we deduce that 
\begin{equation}
\label{eq:flv}
\F(x,\underline A^\eps_1(x))=\Fl^{(v^\eps_t)}_1(x)\in\supp(\mu^\eps_1)\subset B_{\eps R}(y)\qquad \mu^\eps_0-a.e.\ x.
\end{equation}
Now notice that $\ppi$ is concentrated on constant speed geodesics of length bounded above by $R$, hence the same holds for $\ppi^\eps$, so that from \eqref{eq:speedvt} and \eqref{eq:reprgeo} we deduce that 
\begin{equation}
\label{eq:R}
|v_s^\eps|\circ\Fl^{(v^\eps_t)}_s\leq R\qquad \mu^\eps_0-a.e..
\end{equation}
Therefore using  the trivial inequality
\[
|\underline A_{1}^\eps|^2=\sum_{i=1}^N|\underline A_{i,1}^\eps|^2\leq \sum_{i=1}^N\int_0^1|\la v^\eps_s,X_i\ra|^2\circ \Fl^{(v^\eps_t)}_s\,\d s=\int_0^1|v^\eps_s|^2\circ \Fl^{(v^\eps_t)}_s\,\d s
\stackrel{\eqref{eq:R}}\leq R^2
\]
valid $\mu^\eps_0$-a.e.\ in conjunction with \eqref{eq:flv} we deduce that for $\mu^\eps_0=\mu_{\eps}$-a.e.\ $x$
\begin{equation}
\label{eq:exa}
\text{there exists $\underline a\in\R^N$ with $|\underline a|\leq R$ such that $\sfd(\F(x,\underline a),y)\leq \eps R$}
\end{equation}
and an  argument based on the continuity of $\F$ and the compactness of $B_R(0)\subset \R^N$ yields that the same holds for any $x\in\supp(\mu_\eps)$.

Now notice that simple considerations about the structure of $W_2$-geodesics grant that the Hausdorff distance between $\supp(\mu_0)=B_R(y)$ and $\supp(\mu_\eps)$ is bounded above by $\eps R$, thus for $x\in B_R(y)$ there is a sequence $n\mapsto x_n\in\supp(\mu_{1/n})$ converging to $x$. Let $\underline a_n$ be given by \eqref{eq:exa} for $x:=x_n$ and $\eps:=\frac1n$: by the uniform bound $|\underline a_n|\leq R$ and up to pass to a non-relabeled subsequence we can assume that $\underline a_n\to\underline a$ for some $\underline a\in \R^N$ with $|\underline a|\leq R$. Passing to the limit in
\[
\sfd\big(\F(x_n,\underline a_n),y\big)\leq \frac Rn
\]
using the continuity of $\F$ we conclude that $\F(x,\underline a)=y$. By the arbitrariness of $x\in B_R(y)$ and of $R>0$ the proof is completed.
\end{proof}
Let us now fix a point $\bar x\in\X$ and denote  by $\mathbb G\subset \R^N$ its stabilizer, i.e.
\begin{equation}
\label{eq:defg}
\mathbb G:=\Big\{\underline a\in\R^N\ :\ \F(\bar x,\underline a)=\bar x\Big\}.
\end{equation}
Notice that the last proposition (and the commutativity of $\R^N$) grants that the stabilizer does not depend on the choice of the particular point $\bar x$; moreover  $\mathbb G$ is a subgroup of $\R^N$ which, by the continuity of $\F$, is closed.

\begin{proposition}\label{prop:disc}
With the same notations and assumptions as in Section \ref{se:setting}  the following holds. The subgroup $\mathbb G$ of $\R^N$ defined in \eqref{eq:defg} is discrete.
\end{proposition}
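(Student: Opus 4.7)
My plan is to argue by contradiction: if $\mathbb{G}$ is not discrete, I will extract a nontrivial one-parameter subgroup $\R\underline v \subset \mathbb{G}$, which via the action forces the harmonic vector field $X_{\underline v} := \sum_i v_i X_i$ to vanish identically, contradicting the fact that $|X_{\underline v}| \equiv |\underline v| = 1$ established in Section \ref{se:setting}.

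To extract the line, I pick $(\underline a_n) \subset \mathbb{G}\setminus\{0\}$ with $\underline a_n \to 0$, which exists by the failure of discreteness. Writing $\underline v_n := \underline a_n/|\underline a_n|$, compactness of the unit sphere in $\R^N$ yields a subsequence with $\underline v_n \to \underline v$, $|\underline v|=1$. For any $t \in \R$, set $k_n := \lfloor t/|\underline a_n|\rfloor \in \Z$; then $k_n \underline a_n \in \mathbb{G}$ by the subgroup property, and $k_n\underline a_n = (k_n|\underline a_n|)\underline v_n \to t\underline v$ since $k_n|\underline a_n|\to t$. As $\mathbb{G}$ is closed (being the preimage of $\{\bar x\}$ under the continuous map $\F(\bar x,\cdot)$), this gives $t\underline v \in \mathbb{G}$ for every $t \in \R$.

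Next, since the paper has already observed that $\mathbb{G}$ does not depend on the base point, I have $\F(y,t\underline v)=y$ for every $y\in\X$ and every $t\in\R$. By Theorem \ref{thm:sumflow} this says $\Fl^{(X_{\underline v})}_t = \Id$ for all $t$, so applying the characterization of Regular Lagrangian Flows in Proposition \ref{prop:rlfeq} to an arbitrary $f \in \W(\X)$, the $L^2$-derivative of the constant map $t\mapsto f\circ\Fl^{(X_{\underline v})}_t\equiv f$ equals $\d f(X_{\underline v})$ and must vanish. Hence $\d f(X_{\underline v})=0$ $\mm$-a.e.\ for every $f\in\W(\X)$, and since differentials of Sobolev functions generate $L^0(T^*\X)$ (Theorem/Definition \ref{thm:defd}) this forces $X_{\underline v}=0$ in $L^0(T\X)$, contradicting $|X_{\underline v}|\equiv 1$. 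The only genuinely new content of the argument is the extraction of the line from non-discreteness; the rest is a direct application of the already developed calculus.
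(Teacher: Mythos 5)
Your proof is correct and follows essentially the same route as the paper's: argue by contradiction, use closedness plus non-discreteness to produce a line $\R\underline v\subset\mathbb G$, and contradict the fact that the harmonic field $X_{\underline v}$ has unit pointwise norm. The only (harmless) differences are that you spell out the standard extraction of the line, which the paper simply asserts, and that you reach the final contradiction via $\d f(X_{\underline v})=0$ for all $f\in\W(\X)$ and the generating property of differentials, whereas the paper uses the metric-speed identity \eqref{eq:metrsp} to see that the flow curves cannot be constant.
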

\begin{proof}We argue by contradiction. If it is not discrete, being closed it must contain a line  so that for some $\underline a=(a_1,\ldots,a_N)\neq 0$ in $\R^N$ we have $t\underline a\in\mathbb G$ for every $t\in\R$. Put $X:=\sum_{i=1}^Na_iX_i$ and notice that $X$ is not identically 0 and harmonic, so that  its Regular Lagrangian Flow $(\Fl^{(X)}_t)$ consists of measure preserving isometries of $\Sp$ such that for $\mm$-a.e.\ $x$ the curve $t\mapsto \Fl^{(X)}_t(x)$ has constant positive speed. In particular, for $\mm$-a.e.\ $x$ such curve is not constant.

On the other hand, the very definition of $\F$ yields
\[
\F(x,t\underline a)=\Fl^{(X)}_t(x)\qquad\forall x\in\X,\ t\in\R
\]
and  by assumption the left hand side is equal to $x$ for every $t$: this gives the desired contradiction and the conclusion.
\end{proof}
The quotient space $\R^N/\mathbb G$ is  equipped with the only Riemannian metric letting the quotient map be a Riemannian submersion. The distance induced by this metric is
\begin{equation}
\label{eq:quotmetr}
\sfd_{\R^N/\mathbb G}\big([\underline a],[\underline b]\big)=\min_{\underline a':[\underline a']=[\underline a]\atop\underline b':[\underline b']=[\underline b]}|\underline a'-\underline b'|.
\end{equation}
Also, $\R^N/\mathbb G$ comes with a canonical, up to multiplication with a positive constant, reference measure $\mm_{\R^N/\mathbb G}$: the Haar measure, which also coincides with the volume measure induced by the metric.

Finally, the map $\F$ passes to the quotient and induces a map $\tilde \F:\R^N/\mathbb G\to\X$ via the formula:
\[
\tilde \F([\underline a]):=\F(\bar x,\underline a).
\]

With this said, we can now conclude the proof of our main result:
\begin{theorem}
With the same notations and assumptions as in Section \ref{se:setting}  the following holds. 
\begin{itemize}
\item[i)]The subgroup $\mathbb G$ of $\R^N$ defined in \eqref{eq:defg} is isomorphic to $\Z^N$, so that the quotient space $\R^N/\mathbb G$ is a flat torus $\mathbb T^N$.
\item[ii)] The induced quotient map $\tilde\F:\mathbb T^N\to \Sp$ is an isometry such that $\tilde \F_*\mm_{\mathbb T^N}=c\mm$ for some  $c>0$.
\end{itemize}
\end{theorem}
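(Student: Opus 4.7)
The plan is to first prove that $\tilde\F:\R^N/\mathbb{G}\to\Sp$ is an isometric bijection, then deduce that this quotient is compact (forcing $\mathbb{G}\simeq\Z^N$), and finally identify the two measures up to a constant. Surjectivity of $\tilde\F$ follows directly from Proposition \ref{prop:suriet} applied with $x=\bar x$. For injectivity, if $\F(\bar x,\underline a)=\F(\bar x,\underline b)$, applying $\F(\cdot,-\underline b)$ to both sides and using \eqref{eq:azione} together with $\F(\bar x,0)=\bar x$ (immediate from the definition of $\F$ and Definition \ref{def:RLF}(ii)) yields $\F(\bar x,\underline a-\underline b)=\bar x$, so $\underline a-\underline b\in\mathbb{G}$. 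The upper bound $\sfd(\tilde\F([\underline a]),\tilde\F([\underline b]))\le\sfd_{\R^N/\mathbb{G}}([\underline a],[\underline b])$ then follows from \eqref{eq:1lip} by infimising over representatives according to \eqref{eq:quotmetr}. For the reverse inequality, set $x:=\tilde\F([\underline a])$, $y:=\tilde\F([\underline b])$ and apply Proposition \ref{prop:suriet} to find $\underline c\in\R^N$ with $\F(x,\underline c)=y$ and $|\underline c|\le\sfd(x,y)$; then \eqref{eq:azione} gives $\F(\bar x,\underline a+\underline c)=\F(\bar x,\underline b)$, so the injectivity just established yields $[\underline c]=[\underline b-\underline a]$, whence $\sfd_{\R^N/\mathbb{G}}([\underline a],[\underline b])\le|\underline c|\le\sfd(x,y)$.

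Now for (i) it remains to show that $\R^N/\mathbb{G}$ is compact; by the isometry established above, equivalently that $\Sp$ is compact. The action $\F(\cdot,\underline a)$ on $\Sp$ is transitive (Proposition \ref{prop:suriet}) and by measure-preserving isometries (Proposition \ref{MP}), so every closed ball $\bar B_\varepsilon(y)\subset\Sp$ has one and the same $\mm$-measure $v_\varepsilon>0$. Any pairwise disjoint family of such balls therefore has cardinality at most $\mm(\Sp)/v_\varepsilon<\infty$ by \eqref{eq:finmass}; a maximal such family gives a finite $2\varepsilon$-net, so $\Sp$ is totally bounded for every $\varepsilon>0$ and, being complete, compact. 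Combined with Proposition \ref{prop:disc}, this forces the closed discrete subgroup $\mathbb{G}$ of $\R^N$ to be cocompact, hence $\mathbb{G}\simeq\Z^N$ and $\R^N/\mathbb{G}$ is the flat $N$-torus $\mathbb{T}^N$.

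For (ii), set $\nu:=\tilde\F_{\ast}\mm_{\mathbb{T}^N}$ on $\Sp$. Since $\tilde\F$ intertwines translation by $[\underline a]$ on $\mathbb{T}^N$ with the isometry $\F(\cdot,\underline a)$ on $\Sp$ (by the very definition of $\tilde\F$ together with \eqref{eq:azione}), $\nu$ is invariant under the latter action; the same holds for $\mm$ by Proposition \ref{MP}. Both are thus finite nonzero Radon measures on $\Sp$ invariant under the now simply transitive action of the compact abelian group $\mathbb{T}^N$, so by uniqueness of the invariant measure on a homogeneous space they must be proportional, i.e.\ $\tilde\F_{\ast}\mm_{\mathbb{T}^N}=c\mm$ for some $c>0$. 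The genuinely delicate step of the whole argument is the compactness part in the second paragraph: finite $\mm$-mass alone is not known to imply compactness for generic $\RCD^\ast(0,N)$ spaces, and it is precisely the transitive measure-preserving $\R^N$-action built up in the preceding sections that makes the packing obstruction work.
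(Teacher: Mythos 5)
Your proof is correct, and for the isometry and measure-identification parts it essentially coincides with the paper's: the upper bound from \eqref{eq:1lip} and \eqref{eq:quotmetr}, the lower bound from a double application of Proposition \ref{prop:suriet}, and proportionality of the measures via invariance under the transitive action plus uniqueness of the Haar measure (the paper phrases this by pulling $\mm$ back to $\R^N/\mathbb G$ via $\tilde\F^{-1}$ and observing translation invariance, which is the mirror image of your argument; you are in fact slightly more careful in making the injectivity of $\tilde\F$ explicit, a point the paper leaves implicit in its lower-bound computation). Where you genuinely diverge is in how $\mathbb G\simeq\Z^N$ is obtained. The paper first identifies $\tilde\F^{-1}_*\mm$ with a multiple of the Haar measure on $\R^N/\mathbb G$, deduces from \eqref{eq:finmass} that this Haar measure is finite, and then invokes the fact that a discrete subgroup $\Z^n\subset\R^N$ has finite-covolume quotient if and only if $n=N$. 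You instead prove compactness of $\Sp$ directly by a packing argument: transitivity plus measure-preserving isometries force all $\varepsilon$-balls to have the same positive mass (positivity using that $\mm$ has full support), so finite total mass bounds any disjoint family and yields total boundedness; cocompactness of the closed discrete $\mathbb G$ then gives full rank. Both routes hinge on exactly the same two ingredients ($\mm(\X)<\infty$ and the transitive measure-preserving isometric action), but yours produces the compactness of $\Sp$ as an explicit intermediate geometric statement before any measure identification, whereas the paper's is shorter because the Haar identification it needs anyway does double duty. Either ordering is sound.
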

\begin{proof} \\
{\bf  $\tilde\F$ is an isometry}
From \eqref{eq:1lip} and the definition \eqref{eq:quotmetr} we get
\begin{equation}
\label{eq:facile}
\sfd\big(\tilde \F([\underline a]),\tilde \F([\underline b])\big)\leq\sfd_{\R^N/\mathbb G}\big( [\underline a], [\underline b]\big)\qquad\forall [\underline a], [\underline b]\in \R^N/\mathbb G.
\end{equation}
Now let $x,y\in\Sp$ and apply twice Proposition \ref{prop:suriet} to find $\underline a\in\R^N$ such that $\F(x,\underline a)=y$ and $|\underline a|\leq \sfd(x,y)$ and $\underline b\in\R^N$ such that $\F(\bar x,\underline b)=x$. Then we have
\[
\sfd_{\R^N/\mathbb G}\big( [\underline b], [\underline a+\underline b]\big)\leq |\underline a|\leq \sfd(x,y),
\]
and since by construction and from \eqref{eq:azione} we have $\tilde \F([\underline b])=x$ and $\tilde \F([\underline a+\underline b])=y$, this inequality   together with \eqref{eq:facile}  shows that $\tilde \F:\R^N/\mathbb G\to\Sp$ is an isometry.\\
{\bf  Up to a multiplicative constant, $\tilde\F$ is  measure preserving} Being an isometry, $\tilde\F$ is invertible: denote by   ${\sf S}:\Sp\to\R^N/\mathbb G$  its inverse and put $\mu:={\sf S}_*\mm$. For $\underline a=(a_1,\ldots,a_N)\in\R^N$ let $X_{\underline a}:=\sum_{i=1}^Na_iX_i$ and notice that \eqref{eq:azione} reads as $\F(\bar x,\underline b+\underline a)=\Fl_1^{(X_{\underline a})}(\F(\bar x,\underline b))$ for every $\underline a,\underline b\in\R^N$. Passing to the quotient we obtain
\begin{equation}
\label{eq:passquot}
\tilde\F([\underline b]+[\underline a])=\Fl_1^{(X_{\underline a})}(\tilde\F ([\underline b]))\qquad\forall \underline a,\underline b\in\R^N,
\end{equation}
hence letting $\tau^{[\underline a]}:\R^N/\mathbb G\to\R^N/\mathbb G$ be the translation by $[\underline a]$ defined by $\tau^{[\underline a]}([\underline b]):=[\underline b]+[\underline a]$ we can rewrite \eqref{eq:passquot} as
\begin{equation}
\label{eq:S}
\tau^{[\underline a]}\circ{\sf S}={\sf S}\circ\Fl^{(X_{\underline a})}_1\qquad\forall \underline a\in\R^N.
\end{equation}
Therefore we have
\[
\tau^{[\underline a]}_*\mu=\tau^{[\underline a]}_*{\sf S}_*\mm\stackrel{\eqref{eq:S}}={\sf S}_*(\Fl^{(X_{\underline a})}_1)_*\mm\stackrel{\eqref{eq:mespre}}={\sf S}_*\mm=\mu\qquad\forall \underline a\in\R^N.
\]
This shows that $\mu$ is translation invariant and thus a multiple of the Haar measure $\mm_{\R^N/\mathbb G}$. \\
{\bf  Up to a multiplicative constant, $\tilde\F$ is  measure preserving} What we just proved and   \eqref{eq:finmass} ensure that $\mm_{\R^N/\mathbb G}$ is a finite measure. Now recall that, as it is well known and trivial to prove, discrete subgroups of $\R^N$ are isomorphic to $\Z^n$ for some $n\leq N$ and that $\R^N/\Z^n$ has finite volume if and only if $n=N$. Being $\mathbb G$ discrete (Proposition \ref{prop:disc}), the thesis follows.
\end{proof}

\appendix
\section{Notes on the Hessian on product spaces}
In this appendix we continue the investigation of differential operators in product spaces by considering products of  $\RCD$ spaces and the Hessian of those functions  depending only on one variable. Recall from \cite{AmbrosioGigliSavare12} (see also \cite{AmbrosioGigliSavare11-2}) that the product of two $\RCD(K,\infty)$ spaces is $\RCD(K,\infty)$ and that the tensorization of the Cheeger energy in the sense of Definition \ref{def:tensch} holds.

On the other hand, it is not clear whether the density of the product algebra in the sense of \ref{def:densalg} holds or not, and in any case it seems that the following slightly stronger density property is necessary for the current purposes:
\begin{definition}[Density of the product algebra - strong form]\label{def:appprod}
We say that  two metric measure spaces $\mmso$ and $\mmst$ have the property of density of product algebra in the strong form if for $\mathcal A\subset W^{1,2}(\X_1\times\X_2)$ defined as in \eqref{eq:prodalg} it holds: for $f\in L^\infty\cap\W(\Sp_1 \times \Sp_2)$ there exists $(f_n)\subset \mathcal A$ uniformly bounded and $\W$-converging to $f$.
\end{definition}
\begin{remark}{\rm
If $X_1$ is infinitesimally Hilbertian and $X_2$ the Euclidean space, such strong form of density holds. This is a consequence of the construction done in \cite{GH15}, which grants that for $X_1$ arbitrary and $X_2=\R$, for any $f\in L^\infty\cap W^{1,2}(X_1\times \X_2)$ we can find $(f_n)\subset\mathcal A$ uniformly bounded and such that $(f_n),(|\d f_n|)$ converge to $f,|\d f|$ in $L^2$ respectively. The infinitesimal Hilbertianity of $\X_1$ and the tensorization of the Cheeger energy (proved in \cite{GH15}) implies the infinitesimal Hilbertianity of $\X_1\times\X_2$ and in turn this forces the $W^{1,2}$-convergence of the functions $(f_n)$ above to $f$.

The case $\X_2=\R^n$ then comes from an induction argument.
}\fr\end{remark}
This extra density assumption is needed in the following approximation lemma in order to use the $L^\infty-$Lip regularization of the heat flow (see \cite{AmbrosioGigliSavare11-2}). Such lemma is about approximation of test functions in the product with test functions depending on one variable only and in order to formulate the result it is convenient to introduce the algebra $\tilde {\mathcal A}$ as
\[
\tilde{\mathcal{A}} := \Big\{  \displaystyle \sum_{j=1}^n g_{1,j}\circ\pi_1\, g_{2,j}\circ\pi_2\ :\ n\in\N,\  \begin{array}{l}
g_{1,j} \in  \test{\Sp_1}\text{ has bounded support}\\
 g_{2,j} \in \test{\Sp_2} \text{ has bounded support}.
 \end{array}\Big\}
\]
Notice that the calculus rules obtained in Section \ref{se:calcprod}  ensure that $\mathcal{\tilde A}\subset \test{\Sp_1\times\Sp_2}$. %Also, noticing that $\h_t h\in\test\Sp$ for $h\in L^\infty(\Sp)$ with bounded support and $t>0$, from the density of $\mathcal A$ in $\W(\Sp_1\times\Sp_2)$ it follows that  $\tilde{\mathcal{A}}$ is dense in $\W(\Sp_1\times\Sp_2)$ as well.

We then have the following lemma about approximation of test functions with ones in $\tilde{\mathcal{A}}$; notice that a two-steps procedure is needed because the required uniform bound on the differentials prevents arguments by diagonalization.
\begin{lemma}\label{le:apprtest}
Let $\mmso$ and $\mmst$ be two $\RCD(K, \infty)$ metric measure spaces for which the density of the product algebra holds in the strong form (Definition \ref{def:appprod}). Let $f \in \test{\Sp_1\times\Sp_2}$ be with bounded support and find $\nchi_1\in \test{\Sp_1}$, $\nchi_2\in \test{\Sp_2}$  with bounded support and such that $\supp(f)$ is contained in the interior of $\{\nchi_1=1\}\times\{\nchi_2=1\}$ (recall \eqref{eq:testsupp}) and for $t>0$ put $\tilde f_t:=\nchi_1\circ\pi_1\nchi_2\circ\pi_2\,\h_tf$.

Then:
\begin{itemize}
\item[i)] It holds
\begin{itemize}
\item[a)] $\tilde f_t\to f$ in $W^{2,2}(\Sp_1\times\Sp_2)$ as $t\downarrow0$,
\item[b)] $\Delta\tilde f_t\to\Delta f$ in $L^2(\Sp_1\times\Sp_2)$ as $t\downarrow0$,
\item[c)] $\sup_{t\in(0,1)}\||\d\tilde f_t|\|_{L^\infty}<\infty$,
\item[d)] the sets $\supp(\tilde f_t)$ are uniformly bounded for $t\in(0,1)$,
\end{itemize}
\item[ii)] For every $t>0$ there exists a sequence $(g_n)\subset \tilde{\mathcal{A}}$ such that:
\begin{itemize}
\item[a)] $g_n\to\tilde f_t$ in $W^{2,2}(\Sp_1\times\Sp_2)$ as $n\to\infty$,
\item[b)] $\Delta g_n\to\Delta\tilde f_t$ in $L^2(\Sp_1\times\Sp_2)$ as $n\to\infty$,
\item[c)] $\sup_{n\in\N}\||\d\tilde g_n|\|_{L^\infty}<\infty$,
\item[d)] the sets $\supp(g_n)$ are uniformly bounded,
\end{itemize}
\end{itemize}
\end{lemma}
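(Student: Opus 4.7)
My plan for (i) starts from the observation that $\chi_1 \circ \pi_1 \cdot \chi_2 \circ \pi_2$ equals $1$ on an open neighbourhood of $\supp f$, so the algebraic identity
\[
\tilde f_t - f \;=\; (\chi_1\circ\pi_1)(\chi_2\circ\pi_2)\,(\h_t f - f)
\]
holds and the analysis reduces to studying $\h_t f - f$. Since $f \in \test{\X_1\times\X_2}$ one has $f \in D(\Delta)$ with $\Delta f \in W^{1,2}$, so $\h_t f \to f$ in $W^{1,2}$, and $\Delta\h_t f = \h_t\Delta f \to \Delta f$ in $L^2$. Applying Bochner's inequality \eqref{eq:boundhess} to $\h_t f - f$ upgrades this to $W^{2,2}$-convergence. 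Multiplying by the fixed cutoff $\chi_1\circ\pi_1 \cdot \chi_2\circ\pi_2$, which lies in $\tilde{\mathcal A} \subset \test{\X_1\times\X_2}$ and is uniformly controlled in $W^{1,\infty}$ and $W^{2,2}$, preserves these convergences via Leibniz rules in the manner of Section \ref{se:other}; this yields (a) and (b). Claim (c) follows from the improved Bakry--\'Emery estimate \eqref{eq:beimp}, namely $|d\h_t f|^2 \le e^{-2Kt}\h_t(|df|^2) \le e^{-2Kt}\||df|\|_{L^\infty}^2$, combined with Leibniz for $d\tilde f_t$. Claim (d) is immediate.

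For (ii) I would combine the strong density hypothesis with the tensorization of the heat semigroup. On products of $\RCD(K,\infty)$ spaces the tensorization of the Cheeger energy forces
\[
\h_s\Bigl(\textstyle\sum_j u_j\circ\pi_1\cdot v_j\circ\pi_2\Bigr) \;=\; \sum_j (\h^{(1)}_s u_j)\circ\pi_1 \cdot (\h^{(2)}_s v_j)\circ\pi_2,
\]
so applying $\h_s$ to an element of $\mathcal A$ yields (up to support) an element of $\tilde{\mathcal A}$, because for each factor, $\h^{(i)}_s$ sends $L^\infty\cap W^{1,2}$ into $\test{\X_i}$ (the $L^\infty$-to-Lipschitz regularization of the heat flow on $\RCD(K,\infty)$ spaces plus $\Delta\h_s^{(i)}\h_{s/2}^{(i)}g \in W^{1,2}$). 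Fix $t > 0$. Applying the strong form of density of the product algebra to $\tilde f_t \in L^\infty\cap W^{1,2}$, pick $(h_n)\subset\mathcal A$ uniformly $L^\infty$-bounded with $h_n \to \tilde f_t$ in $W^{1,2}$. Choose test cutoffs $\tilde\chi_i$ on $\X_i$ with bounded support and $\tilde\chi_i\equiv 1$ on $\supp\chi_i$, and define
\[
g_{s,n} := (\tilde\chi_1\circ\pi_1)(\tilde\chi_2\circ\pi_2)\,\h_s h_n \in \tilde{\mathcal A}.
\]

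For fixed $s>0$, as $n\to\infty$ one has $g_{s,n} \to g_s := (\tilde\chi_1\tilde\chi_2)\h_s\tilde f_t$ in $W^{2,2}$: the $W^{1,2}$-convergence and the heat-flow regularity $\|\Delta\h_s(h_n - \tilde f_t)\|_{L^2} \leq \tfrac{1}{s}\|h_n - \tilde f_t\|_{L^2}$ combined with Bochner yield $W^{2,2}$-convergence, then multiplication by the fixed test cutoff preserves this. As $s\downarrow 0$, the argument of part (i) applied to $\tilde f_t$ (which is itself test) gives $g_s \to \tilde f_t$ in $W^{2,2}$ with $\Delta g_s \to \Delta \tilde f_t$ in $L^2$, noting that $\tilde\chi_1\tilde\chi_2 \equiv 1$ on $\supp\tilde f_t$. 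A diagonal extraction produces the desired $g_n = g_{s_n, k_n}$.

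The main technical obstacle is the uniform $L^\infty$-bound on $|dg_n|$ required by (c): Bakry--\'Emery yields $\||d\h_s u|\|_{L^\infty}\leq e^{-Ks}\||du|\|_{L^\infty}$ only when $|du|\in L^\infty$, whereas the $h_n$ from strong density are a priori just uniformly $L^\infty$-bounded in $W^{1,2}$. The fix is to exploit the $L^\infty$-to-Lipschitz regularization on $\RCD(K,\infty)$ spaces, which gives a bound $\||d\h_s h|\|_{L^\infty} \leq C(K,s)\|h\|_{L^\infty}$ for every $s>0$ and $h\in L^\infty$, and then arrange the diagonal extraction $(s_n, k_n)$ so that $s_n\downarrow 0$ slowly enough for $C(K,s_n)$ to be harmlessly absorbed into a global constant (or, equivalently, to first pass to an intermediate approximation in which $|d\tilde f_t|\in L^\infty$ provides the uniform Lipschitz control by Bakry--\'Emery and only afterwards run the $n$-extraction). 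Claim (d) is then automatic from the supports of $\tilde\chi_1,\tilde\chi_2$.
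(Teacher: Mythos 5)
Your part (i) is correct and follows the paper's own argument: convergence of $\h_tf$ in $W^{1,2}$ together with $\Delta\h_tf\to\Delta f$ in $L^2$, the upgrade to $W^{2,2}$ via \eqref{eq:boundhess}, multiplication by the fixed cutoff through the Leibniz rules of Section \ref{se:other}, and Bakry--\'Emery for the uniform Lipschitz bound. No issues there.

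Part (ii), however, has a genuine gap, precisely at the point you flag as ``the main technical obstacle''. Your construction approximates $\tilde f_t$ by $(h_n)\subset\mathcal A$ and then mollifies with $\h_s$ for a \emph{second} parameter $s$ that must be sent to $0$ to recover $\tilde f_t$. The uniform bound in (ii)(c) then requires controlling $\||\d\h_{s_n}h_{k_n}|\|_{L^\infty}$ along a diagonal sequence with $s_n\downarrow0$, and neither of your proposed fixes achieves this. The $L^\infty$--$\Lip$ regularization constant $C(K,s)$ blows up (like $s^{-1/2}$) as $s\downarrow0$, so it cannot be ``absorbed into a global constant'' no matter how slowly $s_n\to0$; and the splitting $\h_{s}h_{n}=\h_{s}(h_{n}-\tilde f_t)+\h_{s}\tilde f_t$ does not help, because strong density only gives a uniform $L^\infty$ bound and $W^{1,2}$-convergence of $(h_n)$, not $\|h_n-\tilde f_t\|_{L^\infty}\to0$, so the term $C(K,s_n)\|h_{k_n}-\tilde f_t\|_{L^\infty}$ is not controlled. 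Nor does $W^{2,2}$-closeness of $g_{s_n,k_n}$ to $g_{s_n}$ transfer the $L^\infty$ bound on differentials that $g_{s_n}$ enjoys. Since the uniform Lipschitz bound (c) is exactly what is used later (in Proposition \ref{hessprodprop}) to pass to the limit in the defining identity of the Hessian, this cannot be waved away.

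The paper's proof avoids the second parameter altogether, and this is the idea you are missing: approximate the \emph{original} $f$ (not $\tilde f_t$) by a uniformly bounded sequence $(f_n)\subset\mathcal A$ converging in $W^{1,2}$, and apply the heat flow at the \emph{same fixed time} $t$ appearing in the definition of $\tilde f_t$, setting $g_n:=(\nchi_1\tilde\nchi_1)\circ\pi_1\,(\nchi_2\tilde\nchi_2)\circ\pi_2\,\h_tf_n$ with auxiliary cutoffs $\tilde\nchi_i$ equal to $1$ on $\supp\nchi_i$. Then $\h_tf_n\to\h_tf$ with full heat-flow regularization at the fixed time $t$, the product with the cutoffs equals $\tilde f_t$ in the limit, and the Lipschitz bounds are uniform because $\||\d\h_tf_n|\|_{L^\infty}\leq C(K,t)\sup_n\|f_n\|_{L^\infty}<\infty$ with $t$ fixed: only one limit is taken and (c) comes for free. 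Your identification of the semigroup tensorization $\h_t(u\circ\pi_1\,v\circ\pi_2)=(\h^{(1)}_tu)\circ\pi_1\,(\h^{(2)}_tv)\circ\pi_2$ as the reason why $\h_tf_n\in\tilde{\mathcal A}$ is the right one (the paper proves it via Corollary \ref{cor:lapprod} and uniqueness of the heat flow), but it should be attached to this one-parameter construction rather than to the double limit.
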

\begin{proof}\\
{\bf (i)} It is well known  that $\h_tf\to f$ in $\W(\Sp_1\times\Sp_2)$ and $\Delta\h_tf\to\Delta f$ in $L^2(\Sp_1\times\Sp_2)$ as $t\downarrow0$. From the Leibniz rules for the gradient and the Laplacian and taking into account Proposition \ref{prop:defPhi} and Corollary \ref{cor:lapprod} we then see that $\tilde f_t\to f$ in $W^{1,2}(\Sp_1\times\Sp_2)$ and $\Delta\tilde f_t\to\Delta f$ in $L^2(\Sp_1\times\Sp_2)$ as $t\downarrow0$. Convergence in $W^{2,2}$ then follows by \eqref{eq:boundhess}. The uniform bounds on the supports is trivial by construction and the uniform bound on the differential follows by the Bakry-\'Emery estimate (see Theorem 7.2 in \cite{AmbrosioGigliMondinoRajala12}) and the fact that $|\d f|\in L^\infty$.

\noindent{\bf (ii)} Fix $t>0$ and use \eqref{eq:testsupp} to find functions $\tilde\nchi_1\in \Test  (\Sp_1)$, $\tilde\nchi_2\in\Test  (\Sp_2)$  with bounded support such that $\supp(\tilde f_t)$ is contained in the interior of $\{\tilde\nchi_1=1\}\times\{\tilde\nchi_2=1\}$. Also, let $(f_n)\subset \mathcal{ A}$ be uniformly bounded and $W^{1,2}$-converging to $f$  and put 
\[
g_n:=(\nchi_1\tilde\nchi_1)\circ\pi_1\,(\nchi_2\tilde\nchi_2)\circ\pi_2\,\h_tf_n\qquad\forall n\in\N.
\]
We claim that the $g_n$'s satisfy the thesis. Indeed, from the regularizing properties of the heat flow we know that $\h_tf_n\to\h_tf$ in $W^{1,2}(\Sp_1\times\Sp_2)$ and $\Delta\h_tf_n\to\Delta\h_tf$ in $L^2(\Sp_1\times\Sp_2)$. Since $(\nchi_1\tilde\nchi_1)\circ\pi_1\,(\nchi_2\tilde\nchi_2)\circ\pi_2\,\h_tf=\tilde f_t$, the same arguments used in the previous step grant that $g_n\to \tilde f_t$ in $W^{2,2}(\Sp_1\times\Sp_2)$ and $\Delta g_n\to \Delta\tilde f_t$ in $L^2(\Sp_1\times\Sp_2)$. The fact that the supports of the $g_n$'s are uniformly bounded is obvious, and the uniform bound on the differentials follows from the uniform bounds on the $f_n$'s and the $L^\infty-\Lip$ regularization property  (see Theorem 7.3 in \cite{AmbrosioGigliMondinoRajala12}).

Thus it remains to show that $g_n\in\mathcal{\tilde A}$ and since test functions form an algebra, to this aim it is sufficient to show that $
\h_tf_n\in\mathcal{\tilde A}$. By the linearity of the heat flow, the fact that $\h_t h$ is a test function for $h\in L^\infty$ and $t>0$ and performing  if necessary a truncation argument on the various addends in $f_n\in\mathcal A$, to conclude it is sufficient to show that for $h_1\in L^\infty({\Sp_1})$ and $h_2\in L^\infty( \Sp_2)$  it holds
\begin{equation}
\label{eq:tenscal}
\h_t(h_1\circ\pi_1\,h_2\circ\pi_2)=h_1\circ\pi_1\, \h^{\Sp_2}_t(h_2)\circ\pi_2+h_2\circ\pi_2\, \h^{\Sp_1}_t(h_1)\circ\pi_2\qquad\forall t>0,
\end{equation}
where $\h_t^{\Sp_1}$, $\h_t^{\Sp_2}$ are the heat flows in $\Sp_1,\Sp_2$ respectively. To this aim notice that Corollary \ref{cor:lapprod} grants that for $h_1,h_2$ in the domain of the Laplacian in the respective spaces it holds
\[
\Delta (h_1\circ\pi_1\,h_2\circ\pi_2)=h_1\circ\pi_1\,(\Delta h_2)\circ\pi_2+h_2\circ\pi_2\,(\Delta h_1)\circ\pi_1
\]
then observe that thanks to this fact the map sending $t\geq 0$ to the right hand side of \eqref{eq:tenscal}, call it $\tilde h_t$, is absolutely continuous with values in $L^2(\X_1\times\X_2)$ and its derivative is given by $\Delta  \tilde h_t$. By the uniqueness of the heat flow we conclude that $\tilde h_t=\h_t(\tilde h_0)$, which is our claim.
\end{proof}
We then have the following result:
\begin{proposition}\label{hessprodprop}
Let $\mmso$ and $\mmst$ be two $\RCD(K, \infty)$  spaces for which the density of the product algebra holds in the strong form (Definition \ref{def:appprod})  and let $f \in \WT_\loc(\Sp_1)$. Then  $f\circ\pi_1 \in \WT_{\loc}(\Sp_1 \times \Sp_2)$ and
\begin{equation}\label{hessprod}
\hess(f\circ \pi_1)(\nabla g, \nabla\tilde g) (x_1,x_2)= \hess(f)(\nabla g_{x_2}, \nabla\tilde g_{x_2}) (x_1),\qquad\mm_1\times\mm_2-a.e.\ (x_1,x_2)
\end{equation}
for every $g,\tilde g \in \test{\X_1 \times \X_2}$. 
\end{proposition}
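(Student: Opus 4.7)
The plan is to verify the defining identity \eqref{eq:defhess} for $f\circ\pi_1$ with candidate
\[
A(\nabla g,\nabla\tilde g)(x_1,x_2):=\hess(f)(\nabla g_{x_2},\nabla\tilde g_{x_2})(x_1),
\]
for every $g,\tilde g,h\in\test{\Sp_1\times\Sp_2}$ with bounded support. A preliminary remark, immediate from Proposition \ref{prop:defPhi} (and the identification $\nabla=(\d\cdot)^\sharp$), is that $\nabla(f\circ\pi_1)=\Phi_1(\widehat{\nabla f})$, so in particular $f\circ\pi_1\in W^{1,2}_{\loc}(\Sp_1\times\Sp_2)$ and, by the orthogonality \eqref{eq:orto}, in every subsequent computation $\nabla(f\circ\pi_1)$ pairs only with the $\Phi_1$-component of other gradients.

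The core of the argument is to verify the identity first on the algebra $\tilde{\mathcal A}$. By multilinearity, it suffices to handle $g=g_1\circ\pi_1\cdot g_2\circ\pi_2$ and analogously for $\tilde g, h$, with $g_i,\tilde g_i,h_i\in\test{\Sp_i}$. Expanding $\nabla g=g_2\circ\pi_2\,\Phi_1(\widehat{\nabla g_1})+g_1\circ\pi_1\,\Phi_2(\widehat{\nabla g_2})$ (and likewise for $\tilde g$), using Theorem \ref{dectang}, \eqref{eq:2phi1} and \eqref{eq:orto} one finds
\[
\la\nabla(f\circ\pi_1),\nabla g\ra(x_1,x_2)=g_2(x_2)\la\nabla f,\nabla g_1\ra(x_1),
\]
and similarly for $\tilde g$. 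By Proposition \ref{divprod} the divergences $\div(h\nabla g)$, $\div(h\nabla\tilde g)$ split into an $\Sp_1$-part and an $\Sp_2$-part. Expanding $\la\nabla g,\nabla\tilde g\ra = g_2\tilde g_2\,\la\nabla g_1,\nabla\tilde g_1\ra\circ\pi_1+g_1\tilde g_1\,\la\nabla g_2,\nabla\tilde g_2\ra\circ\pi_2$ (again by \eqref{eq:orto}) and taking its gradient via Leibniz, only the $\Phi_1$-components again survive the pairing with $\nabla(f\circ\pi_1)$. After Fubini, the $\Sp_2$-divergence contributions coming from the first two integrands on the RHS of \eqref{eq:defhess}, namely terms of the form $-\int h_1\tilde g_1\la\nabla f,\nabla g_1\ra\,d\mm_1\int g_2\div_{\Sp_2}(h_2\nabla\tilde g_2)\,d\mm_2$, are rewritten by integration by parts in the $\Sp_2$-variable as $+\int h_1\tilde g_1\la\nabla f,\nabla g_1\ra\,d\mm_1\int h_2\la\nabla g_2,\nabla\tilde g_2\ra\,d\mm_2$, and cancel exactly the cross terms produced by Leibniz-differentiating $g_1\tilde g_1\la\nabla g_2,\nabla\tilde g_2\ra\circ\pi_2$ in the third integrand. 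What remains factorizes as $\int h_2 g_2\tilde g_2\,d\mm_2$ times the RHS of \eqref{eq:defhess} for $\hess(f)$ applied to $g_1,\tilde g_1,h_1$ on $\Sp_1$, which by definition of $\hess(f)$ equals $2\int h_2g_2\tilde g_2\,d\mm_2\cdot\int h_1\hess(f)(\nabla g_1,\nabla\tilde g_1)\,d\mm_1$, i.e.\ precisely $2\int h\,A(\nabla g,\nabla\tilde g)\,d(\mm_1\times\mm_2)$.

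The extension to arbitrary $g,\tilde g,h\in\test{\Sp_1\times\Sp_2}$ with bounded support then goes via Lemma \ref{le:apprtest}, approximating one function at a time through the two-step scheme of that lemma. The uniform $L^\infty$-bounds on the differentials together with the uniform support bounds, the $W^{2,2}$-convergence and the $L^2$-convergence of Laplacians ensure that each of the three integrands of \eqref{eq:defhess} passes to the limit; on the $A$-side, the convergence of $\nabla(g_n)_\cdot,\nabla(\tilde g_n)_\cdot$ in $L^0(\Sp_2,L^0(T\Sp_1))$ provided by Lemma \ref{le:perprod}, combined with the uniform $L^\infty$-bounds on the differentials, identifies the limit and yields the $L^2_{\loc}$-integrability of $A(\nabla g,\nabla\tilde g)$.

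The main obstacle is the algebraic bookkeeping in the $\tilde{\mathcal A}$-computation: several cross-terms appear and their cancellation is not visible until one integrates by parts in the $\Sp_2$-variable—essentially a reflection, in the product geometry, of the symmetry built into \eqref{eq:defhess} itself. The use of the strong form of density (Definition \ref{def:appprod}) via Lemma \ref{le:apprtest} is essential in the approximation step, since nonlinear quantities like $\la\nabla g_n,\nabla\tilde g_n\ra$ and $\div(h_n\nabla g_n)$ must be passed to the limit and this requires uniform $L^\infty$-control along with the $W^{2,2}$-convergence.
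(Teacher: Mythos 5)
Your proposal is correct and follows essentially the same route as the paper: reduce via the approximation Lemma \ref{le:apprtest} (using the strong density assumption and the uniform $L^\infty$/support bounds to pass \eqref{eq:defhess} to the limit) to $g,\tilde g$ of product form, then verify \eqref{eq:defhess} by expanding with \eqref{eq:orto}, \eqref{Chpr} and Proposition \ref{divprod}, the cross terms cancelling after an integration by parts in the $\Sp_2$-variable. The only cosmetic differences are that the paper polarizes to reduce to $g=\tilde g$ and keeps $h$ a general test function (exploiting the $W^{1,2}$-continuity in $h$ of both sides of \eqref{eq:defhess}), whereas you keep $g,\tilde g$ distinct and also factor $h$; both variants work.
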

\begin{proof} It is readily verified that the map sending $g,\tilde g\in \Test(\X_1 \times \X_2)$ to the right hand side of \eqref{hessprod} defines an element of $L^2_\loc((T^*)^{\otimes 2}(\Sp_1\times\Sp_2))$, hence to conclude it is sufficient to show that for such element the identity \eqref{eq:defhess} holds.

Now consider the identity \eqref{eq:defhess} defining the Hessian for functions in $W^{2,2}_\loc$ with $g:=g_n$, where $(g_n)$  is a sequence of test functions $W^{2,2}$-converging to some limit $g$, such that $\Delta g_n\to\Delta g$ in $L^2$ and with $\supp(g_n)$ and $\||\d g_n|\|_{L^\infty}$ uniformly bounded: it is readily verified that in this case  the two sides of \eqref{eq:defhess}  pass to the limit.

Thus by  Lemma \ref{le:apprtest} above and the bilinearity and symmetry in $g,\tilde g$, to conclude it is sufficient to consider $g=\tilde g$ of the form $g=g_1\circ\pi_1\,g_2\circ\pi_2$ for $g_1 \in \test{\X_1}$ and $g_2 \in \test{\X_2}$ both with bounded support. For such $g$ we have $g_{x_2}=g_2(x_2)g_1$, and thus $\nabla g_{x_2}=g(x_2)\nabla g_1$, so that our aim is  to show that  for any $h\in \test{\X_1 \times \X_2}$ with bounded support it holds
\begin{equation}
\label{eq:perhess}
\begin{split}
-\int &\la\nabla(f\circ\pi_1),\nabla g\ra\div(h\nabla g)+h\langle\nabla (f\circ\pi),\nabla\frac{|\nabla g|^2}2 \rangle\,\d(\mm_1\times\mm_2)\\
&\qquad\qquad\qquad\qquad=\int h  g_2^2\circ\pi_2 \hess(f)(\nabla g_1, \nabla g_1) \circ\pi_1 \,\d(\mm_1\times\mm_2).
\end{split}
\end{equation}
Denoting for clarity  $\div_1,\div_2$ the divergence operators in $\Sp_1,\Sp_2$ respectively and using formulas \eqref{eq:pera}, \eqref{eq:orto} and \eqref{eq:divprod}, for $\mm_1\times\mm_2$-a.e.\ $(x_1,x_2)$ we have
\begin{equation}
\label{eq:h1}
\begin{split}
&\big(\la\nabla(f\circ\pi_1),\nabla g\ra\div(h\nabla g)\big)(x_1,x_2)\\
&\qquad=g_2(x_2)\la \nabla f,\nabla g_1\ra(x_1)\big(  g_2(x_2)\div_1(h_{x_2}\nabla g_1)(x_1)+ g_1(x_1)\div_2(h^{x_1}\nabla g_2)(x_2)\big).
\end{split}
\end{equation}
From \eqref{Chpr} we have
\[
|\nabla g|^2=g_2^2\circ\pi_2|\nabla g_1|^2\circ\pi_1+g_1^2\circ\pi_1|\nabla g_2|^2\circ\pi_2
\]
and thus recalling \eqref{eq:orto} we obtain
\[
\begin{split}
h\langle\nabla (f\circ\pi_1),\nabla\frac{|\nabla g|^2}2\rangle= h\,g^2_2\circ\pi_2\langle\nabla f,\nabla\frac{|\nabla g_1|^2}2\rangle\circ\pi_1+h\,|\nabla g_2|^2\circ\pi_2\big(g_1 \la\nabla f,\nabla g_1\ra\big)\circ\pi_1.
\end{split}
\]
Adding up this identity and \eqref{eq:h1} and integrating, the conclusion \eqref{eq:perhess} follows by the defining property \eqref{eq:defhess} of $\H f$ and the trivial identity
\[
\begin{split}
\int g_1(x_1)g_2(x_2)\la \nabla f,\nabla g_1\ra(x_1)&\div_2(h^{x_1}\nabla g_2)(x_2)\,\d\mm_1(x_1)\,\d\mm_2(x_2)\\
&=\int g_1(x_1)\la \nabla f,\nabla g_1\ra(x_1)\int g_2\div_2(h^{x_1}\nabla g_2)\,\d\mm_2\,\d\mm(x_1)\\
&=-\int g_1(x_1)\la \nabla f,\nabla g_1\ra(x_1)\int h^{x_1} |\nabla g_2|^2\,\d\mm_2\,\d\mm(x_1)\\
&=-\int h\,|\nabla g_2|^2\circ\pi_2\big(g_1 \la\nabla f,\nabla g_1\ra\big)\circ\pi_1\,\d(\mm_1\times\mm_2).
\end{split}
\]
\end{proof}

\def\cprime{$'$} \def\cprime{$'$}

\end{document}